\theoremstyle{plain}
\newtheorem{theorem}{Theorem}[section]
\newtheorem{lemma}[theorem]{Lemma}
\newtheorem{corollary}[theorem]{Corollary}
\newtheorem{conjecture}[theorem]{Conjecture}
\theoremstyle{definition}
\newtheorem{remark}[theorem]{Remark}
\numberwithin{equation}{section} 
\begin{document}

\normalem 

\author{Marc Schilder}

\date{\today}

\title{The $A$-Polynomial and Knot Volume}

\maketitle

\thispagestyle{empty}
\begin{spacing}{1.25}
\maketitle

\begin{abstract}
In this paper, we conjecture a connection between the $A$-polynomial of a knot in $\mathbb{S}^{3}$ and the hyperbolic volume of its exterior $\mathcal{M}_{K}$ : the knots with zero hyperbolic volume are exactly the knots with an $A$-polynomial where every irreducible factor is the sum of two monomials in $L$ and $M$.
Herein, we show the forward implication and examine cases that suggest the converse may also be true.
Since the $A$-polynomial of hyperbolic knots are known to have at least one irreducible factor which is not the sum of two monomials in $L$ and $M$, this paper considers satellite knots which are graph knots and some with positive hyperbolic volume.
\end{abstract}




\section{Introduction}
One of the major problems in knot theory is distinguishing knots in $\mathbb{S}^{3}$.
There are many polynomial invariants, such as the Alexander polynomial, the colored Jones polynomials, and the HOMFLY polynomial, each utilizing properties of knot diagrams, knot exteriors, knot groups, etc.
The $A$-polynomial is an algebraic-geometric knot invariant closely related to the $SL_{2}\mathbb{C}$-character variety and the strongly detected boundary slopes of the knot.
Certain knot families have explicit formulas for their $A$-polynomials, such as $n$-twist knots~\cite{hs_2004}, iterated torus knots~\cite{nz_2017}, and $r$-twisted Whitehead doubles over torus knots~\cite{ruppe_2016}.
Other families of interest have non-explicit formulas such as double-twist knots~\cite{petersen_2015}, two-bridge knots~\cite{hs_2007}, $(-2,3,2n+1)$-pretzel knots~\cite{gm_2011}~\cite{ty_2004}, and some families of hyperbolic knots.
The $A$-polynomials of general satellite knots are less studied than those of hyperbolic knots and torus knots.

We call the {\it rational pseudo-graph knots} the family of knots whose $A$-polynomial factors so that each factor is the sum of two monomials in $L$ and $M$, $L^{q}M^{p}-\delta$ or $L^{q}-\delta M^{p}$ for relatively prime $p,q$ with $q>0$ and $\delta\in\{\pm1\}$,
\begin{align*}
\mathcal{G}_{\mathbb{Q}}:=&\left\{K\subset \mathbb{S}^{3}\middle|A_{K}\doteq\prod_{j\in J}(L^{q_{j}}M^{p_{j}}-\delta_{j}),p_{j},q_{j}\in\mathbb{Z},\delta_{j}\in\{\pm1\},(p_{j},q_{j})=1,q_{j}>0\right\},
\end{align*}
where $J$ is some finite indexing set.
The symbol $\doteq$ denotes equivalence up to normalization in $\mathbb{Z}[L,M]$, that is $f(L,M)\doteq g(L,M)$ if $f(L,M)=\sigma L^{a}M^{b}g(L,M)$ for some integers $a,b$ and $\sigma\in\{\pm1\}$, so $L^{q}M^{p}-\delta\doteq L^{q}-\delta M^{-p}$.
We also write the {\it reduced polynomial} obtained from $f(L,M)$ in $\mathbb{Z}[L,M]$ by removing repeated factors as $\mathrm{Red}[f(L,M)]$.

Contained inside this set of knots is the set of \textit{integer pseudo-graph knots} where each $q_{j}=1$:
\begin{align*}
\mathcal{G}_{\mathbb{Z}}:=&\left\{K\subset \mathbb{S}^{3}\middle|A_{K}\doteq\prod_{j\in J}(LM^{r_{j}}-\delta_{j}),r_{j}\in\mathbb{Z},\delta_{j}\in\{\pm1\}\right\}.
\end{align*}
As we will show in Corollary~\ref{graphknotsgz}, contained inside $\mathcal{G}_{\mathbb{Z}}$ is the set of {\it graph knots} $\mathcal{G}_{0}$, that is, knots whose complements are graph manifolds; these knots are combinations of $(p,q)$-cables and connected sums over the unknot $U$, which will be discussed further in Section~\ref{knotfam}.
Also of interest, the logarithmic Mahler measure of a multivariable polynomial $P(z_{1},\ldots,z_{n})$ is denoted by:
$$
\mathrm{m}(P):=\frac{1}{(2\pi)^{n}}\int_{[0,2\pi]^{n}}\ln\left|P\left(e^{i\theta_{1}},\ldots,e^{i\theta_{n}}\right)\right|\,\mathrm{d}\theta_{1}\cdots\mathrm{d}\theta_{n}.
$$
The logarithmic Mahler measures of knot polynomials appear to have connections to the geometry of the knot, so let the set of knots whose $A$-polynomial have logarithmic Mahler measure zero be denoted
\begin{align*}
\mathfrak{M}_{0}:=\left\{K\subset\mathbb{S}^{3}\middle|0=\mathrm{m}\left(A_{K}\right)=\frac{1}{(2\pi)^{2}}\int_{0}^{2\pi}\int_{0}^{2\pi}\ln\left|A_{K}(e^{i\theta},e^{i\phi})\right|\,\mathrm{d}\theta\,\mathrm{d}\phi\right\}.
\end{align*}
Simple computation of these integrals shows that $\mathcal{G}_{\mathbb{Q}}\subset\mathfrak{M}_{0}$, and hence the containments are given by $\mathcal{G}_{\mathbb{Z}}\subset\mathcal{G}_{\mathbb{Q}}\subset\mathfrak{M}_{0}$.

The main satellite operations considered in this paper are $(p,q)$-cables $[(p,q),K]$ and connected sums $K_{1}\#K_{2}$; however, we will also discuss certain winding number zero satellite operations, such as $n$-twisted Whitehead doubles and $(m,n)$-double twisted doubles.
For $(p,q)$-cables, the convention used is $q\geq2$ is the {\it winding number} of the cable and $p$ is any nonzero integer relatively prime to $q$.

Since it is unknown at this time whether $\mathcal{G}_{\mathbb{Z}}$ is a proper subset of $\mathcal{G}_{\mathbb{Q}}$, we will focus primarily on results about $\mathcal{G}_{0}$ and $\mathcal{G}_{\mathbb{Z}}$.
The first result is the computation of $A$-polynomials of connected sums and $(p,q)$-cables of knots in $\mathcal{G}_{\mathbb{Z}}$:
\begin{theorem}
\label{gzconnect}
For nontrivial knots $K_{1},K_{2}\in\mathcal{G}_{\mathbb{Z}}$ with $A_{K_{1}}\doteq\underset{i\in I}{\prod}\left(LM^{r_{i}}-\delta_{i}\right)$ and $A_{K_{2}}\doteq\underset{j\in J}{\prod}\left(LM^{s_{j}}-\delta_{j}\right)$ as above.
Then the $A$-polynomial of their connected sum $K_{1}\# K_{2}$ is given by:
\begin{align*}
A_{K_{1}\#K_{2}}\doteq\mathrm{Red}\left[\underset{(i,j)\in I\times J}{\prod}\left(LM^{r_{i}+s_{j}}-\delta_{i}\delta_{j}\right)\right]
\end{align*}
and so $K_{1}\#K_{2}\in\mathcal{G}_{\mathbb{Z}}$.
\end{theorem}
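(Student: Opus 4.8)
The plan is to read off $A_{K_{1}\#K_{2}}$ from the amalgamated-product structure of the knot group and then substitute the binomials. Recall the standard fact that $\mathcal M_{K_{1}\#K_{2}}$ is built by gluing $\mathcal M_{K_{1}}$ and $\mathcal M_{K_{2}}$ along an annular neighborhood of a meridian in each boundary torus, so that $\pi_{1}(\mathcal M_{K_{1}\#K_{2}})\cong\pi_{1}(\mathcal M_{K_{1}})\ast_{\langle\mu\rangle}\pi_{1}(\mathcal M_{K_{2}})$, amalgamated along the common meridian $\mu=\mu_{1}=\mu_{2}$, with longitude $\lambda=\lambda_{1}\lambda_{2}$ and with each $\lambda_{i}$ commuting with $\mu$ (being peripheral in $\mathcal M_{K_{i}}$). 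First I would use that $\mathrm{Hom}(-,SL_{2}\mathbb C)$ turns this pushout of groups into a fibered product, so that a representation of $\pi_{1}(\mathcal M_{K_{1}\#K_{2}})$ is a pair $(\rho_{1},\rho_{2})$ of representations of the two knot groups with $\rho_{1}(\mu)=\rho_{2}(\mu)$.

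Next I would pass to the eigenvalue variety underlying the $A$-polynomial, i.e.\ the closure of the image of the $SL_{2}\mathbb C$-character variety under $\rho\mapsto(L,M)$, where $L,M$ are the eigenvalues of $\rho(\lambda),\rho(\mu)$. Diagonalizing the common meridian — valid for $M\neq\pm1$, and the values $M=\pm1$ touch only lower-dimensional strata, hence not the defining polynomial — forces each $\rho_{i}(\lambda_{i})$ to be diagonal in the same basis, so $\rho(\lambda)=\rho_{1}(\lambda_{1})\rho_{2}(\lambda_{2})$ has eigenvalue $L=L_{1}L_{2}$. Hence $(L,M)$ lies on the eigenvalue variety of $K_{1}\#K_{2}$ precisely when there are $(L_{1},M)$ on that of $K_{1}$ and $(L_{2},M)$ on that of $K_{2}$, with the same $M$, and $L=L_{1}L_{2}$; when a restriction $\rho_{i}$ is reducible its $\lambda_{i}$-eigenvalue is $1$ (the longitude is null-homologous), so the mixed cases in which one side is abelian feed exactly the factor $L-1$ — present in every $A_{K_{i}}$ as its abelian component — into one slot below. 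Writing $A_{K_{1}}\doteq\prod_{i\in I}f_{i}$ and $A_{K_{2}}\doteq\prod_{j\in J}g_{j}$ with $f_{i}=LM^{r_{i}}-\delta_{i}$ and $g_{j}=LM^{s_{j}}-\delta_{j}$ (the abelian components being the indices with exponent $0$ and sign $+1$), this yields
$$
A_{K_{1}\#K_{2}}\;\doteq\;\mathrm{Red}\!\left[\prod_{(i,j)\in I\times J}\bigl(f_{i}\boxtimes g_{j}\bigr)\right],
$$
where $f\boxtimes g$ is, up to $\doteq$, the polynomial with zero set $\{(L_{1}L_{2},M):f(L_{1},M)=g(L_{2},M)=0\}$, obtainable as $\mathrm{Res}_{L_{1}}\bigl(f(L_{1},M),\,L_{1}^{\deg_{L}g}\,g(L/L_{1},M)\bigr)$, and the distributivity of $\boxtimes$ over products of factors modulo repeated factors is an elementary resultant identity. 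I expect the main obstacle to be justifying this displayed identity precisely — that the eigenvalue variety of the connected sum is exactly this union of product curves, with no extraneous components and none missing — and, depending on the literature, it may be cleanest either to cite an existing connected-sum formula for the $A$-polynomial or to prove the two inclusions of eigenvalue varieties directly from the fibered-product description.

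Granting the identity, the rest is a one-line resultant computation: $f_{i}=0$ gives $L_{1}=\delta_{i}M^{-r_{i}}$, and inserting this into $g_{j}(L/L_{1},M)=(L/L_{1})M^{s_{j}}-\delta_{j}$ and clearing $L_{1}$ yields $f_{i}\boxtimes g_{j}\doteq LM^{r_{i}+s_{j}}-\delta_{i}\delta_{j}$. Substituting back,
$$
A_{K_{1}\#K_{2}}\;\doteq\;\mathrm{Red}\!\left[\prod_{(i,j)\in I\times J}\bigl(LM^{r_{i}+s_{j}}-\delta_{i}\delta_{j}\bigr)\right],
$$
which is the claimed formula; the $\mathrm{Red}$ cannot be dropped, since distinct pairs $(i,j)$ may give the same exponent $r_{i}+s_{j}$ and the same sign $\delta_{i}\delta_{j}$. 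Finally, each surviving factor has the form $LM^{r}-\delta$ with $r\in\mathbb Z$ and $\delta\in\{\pm1\}$, so $K_{1}\#K_{2}\in\mathcal G_{\mathbb Z}$ by definition.
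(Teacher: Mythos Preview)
Your approach is essentially the same as the paper's: both use the amalgamated-product description of $\pi_{1}(\mathcal{M}_{K_{1}\#K_{2}})$, the Cooper--Long observation that $\xi(\rho_{1}\ast\rho_{2})=(L_{1}L_{2},M)$ whenever the $\rho_{i}$ agree on the meridian, and then the trivial computation $L_{1}=\delta_{i}M^{-r_{i}}$, $L_{2}=\delta_{j}M^{-s_{j}}$ for each pair of binomial factors. The obstacle you flag---ruling out extraneous components---is exactly what the paper addresses, and it does so by checking that isolated points $(L_{i},M)$ in $\xi(R(\mathcal{M}_{K_{i}}))$ can only produce isolated points in $\xi(R(\mathcal{M}_{K_{1}\#K_{2}}))$: since every one-dimensional component of the other side is a binomial $L_{j}M^{s_{j}}-\delta_{j}$, the value of $M$ is fixed by the isolated point and hence $L_{j}$ is determined, so no new curve appears.
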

\begin{theorem}
\label{gzcable}
For a nontrivial knot $C\in\mathcal{G}_{\mathbb{Z}}$ with $A_{C}\doteq\underset{j\in J}{\prod}\left(LM^{r_{j}}-\delta_{j}\right)$ as above, the $A$-polynomial of the $(p,q)$-cable over $C$ is given by:
\begin{align*}
A_{[(p,q),C]}\doteq\mathrm{Red}\left[(L-1)F_{(p,q)}(L,M)\underset{j\in J}{\prod}\left(LM^{r_{j}q^{2}}-{\delta_{j}}^{q}\right)\right],
\end{align*}
where $F_{(p,q)}(L,M)$ is defined in Remark \ref{iteratedtorus}, and so $[(p,q),C]\in\mathcal{G}_{\mathbb{Z}}$.
\end{theorem}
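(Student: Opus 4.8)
The plan is to compute the $A$-polynomial of the cable $[(p,q),C]$ from the $A$-polynomial of the companion $C$ using the standard recipe for $A$-polynomials under satellite operations on a solid torus. Recall that the exterior $\mathcal{M}_{[(p,q),C]}$ decomposes along an incompressible torus into the exterior $\mathcal{M}_C$ of the companion and the cable space $\Sigma_{(p,q)}$ (the exterior of the $(p,q)$-torus knot in a solid torus). First I would fix meridian–longitude coordinates $(\mu_C,\lambda_C)$ on $\partial\mathcal{M}_C$, coordinates $(\mu,\lambda)$ on $\partial\mathcal{M}_{[(p,q),C]}$, and track how a character of $\pi_1(\mathcal{M}_{[(p,q),C]})$ restricts to each piece. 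On the companion side, the gluing sends the cabling slope to $\mu_C$, so the eigenvalue variables $(L_C,M_C)$ of $C$ satisfy $M_C = L^{?}M^{q}$ up to the appropriate power dictated by the framing — concretely, the meridian $\mu$ of the cable maps to $\mu_C^{?}$ and the longitude to a word involving $\lambda_C$, which after the eigenvalue substitution yields $M_C\mapsto M^{q}$ and $L_C\mapsto L M^{pq}$ (the $pq$ correction coming from the writhe of the cabling). Substituting into each factor $L_C M_C^{r_j}-\delta_j$ of $A_C$ produces $L M^{pq} M^{q r_j} - \delta_j$; after clearing the common $M^{pq}$ factor under $\doteq$ and matching eigenvalue multiplicities (squaring, since the restriction to the companion boundary is generically a $2$-to-$1$ eigenvalue correspondence which accounts for the $q^2$ and the $\delta_j^q$) one lands on the factor $L M^{r_j q^2}-\delta_j^{\,q}$ recorded in the statement.

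The contribution of the cable space itself is the remaining part of the product. The character variety of $\Sigma_{(p,q)}$ splits into the component of reducible/abelian characters, contributing the factor $L-1$, and the components carrying the torus-knot-type representations, which contribute exactly the polynomial $F_{(p,q)}(L,M)$ from Remark \ref{iteratedtorus} (this is the same factor that appears in the iterated torus knot computations of~\cite{nz_2017}). So I would assemble $A_{[(p,q),C]}$ as the product of three pieces: the abelian factor $(L-1)$, the cable-space factor $F_{(p,q)}(L,M)$, and the image of $A_C$ under the substitution above, namely $\prod_j (LM^{r_j q^2}-\delta_j^{\,q})$. Applying $\mathrm{Red}[\,\cdot\,]$ removes any repeated factors created by the substitution (for instance when two distinct $r_i,r_j$ collapse, or when $F_{(p,q)}$ shares a factor with one of the binomials), giving the stated formula. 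Finally, since each $\delta_j^{\,q}\in\{\pm1\}$ and each $r_j q^2\in\mathbb{Z}$, every binomial factor is again of the form $LM^{r}-\delta$; one checks separately that $L-1$ and each irreducible factor of $F_{(p,q)}(L,M)$ are likewise sums of two monomials in $L$ and $M$ — this is where the explicit shape of $F_{(p,q)}$ in Remark \ref{iteratedtorus} is used — so $[(p,q),C]\in\mathcal{G}_{\mathbb{Z}}$.

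I expect the main obstacle to be bookkeeping the framing/coordinate change precisely: getting the exponent $q^2$ (rather than $q$) on the $M$-variable and the power $q$ on $\delta_j$ correct requires carefully composing the peripheral map of the cabling annulus with the eigenvalue map and keeping track of the two-fold eigenvalue ambiguity, and it is easy to be off by a sign or by a factor of $M^{pq}$. A secondary subtlety is justifying that no extra components of the character variety appear beyond those accounted for — i.e., that every irreducible $SL_2\mathbb{C}$ character of $\mathcal{M}_{[(p,q),C]}$ either restricts nontrivially to $\mathcal{M}_C$ (captured by the substituted $A_C$) or factors through the cable space (captured by $(L-1)F_{(p,q)}$), and that gluing does not kill any boundary-detected component. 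Verifying that each factor of $F_{(p,q)}$ is a binomial in $L,M$ is then a direct inspection and should be routine once Remark \ref{iteratedtorus} is in hand.
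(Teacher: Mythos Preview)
Your high-level strategy is right, but the peripheral substitution you write down is incorrect, and the hand-waving about a ``$2$-to-$1$ eigenvalue correspondence'' does not repair it. For a winding number $q$ satellite the eigenvalue transfer from the companion boundary to the satellite boundary is $M_C = M^{q}$ and $L = L_C^{\,q}$; the integer $p$ does not enter the companion contribution at all (it only affects the cable-space factor $F_{(p,q)}$). This is exactly what is encoded in the resultant $\mathrm{Res}_{\overline{L}}\bigl[f_C(\overline{L},M^{q}),\,L-\overline{L}^{\,q}\bigr]$ of Remark~\ref{nohypcomp} and Lemma~\ref{generalcabling}. With the correct relation, a factor $\overline{L}\,\overline{M}^{\,r_j}-\delta_j$ of $A_C$ becomes: solve $\overline{L}=\delta_j\overline{M}^{-r_j}=\delta_j M^{-r_j q}$, then $L=\overline{L}^{\,q}=\delta_j^{\,q}M^{-r_j q^{2}}$, i.e.\ $LM^{r_j q^{2}}-\delta_j^{\,q}=0$. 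That is where both the exponent $q^{2}$ and the power $\delta_j^{\,q}$ come from; there is no squaring and no $M^{pq}$ to clear. Your substitution $L_C\mapsto LM^{pq}$ produces $LM^{pq+qr_j}-\delta_j$, and no amount of ``matching multiplicities'' turns $pq+qr_j$ into $r_jq^{2}$ or $\delta_j$ into $\delta_j^{\,q}$.

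The paper's proof avoids this pitfall by quoting the Ni--Zhang cabling formula (Lemma~\ref{generalcabling}) as a black box, which already packages the correct eigenvalue relation as the resultant above, and then simply evaluates $\mathrm{Res}_{\overline{L}}\bigl[\overline{L}M^{r_jq}-\delta_j,\,L-\overline{L}^{\,q}\bigr]$ via the Sylvester matrix to obtain $LM^{r_jq^{2}}-\delta_j^{\,q}$. That lemma also disposes of your ``secondary subtlety'' (no extra components): it asserts that $(L-1)F_{(p,q)}$ together with the resultant of $\widetilde{A}_C$ gives \emph{all} of $A_{[(p,q),C]}$, so nothing further needs to be checked. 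If you want to carry out your approach without citing Lemma~\ref{generalcabling}, you must first derive the correct peripheral identities in the cable space (e.g.\ $\mu_C\sim\mu^{q}$ and $\lambda\sim\lambda_C^{\,q}\mu_C^{?}$ in homology, hence $L=L_C^{\,q}$ on eigenvalues), and separately argue completeness; but as written, the coordinate change is wrong and the argument does not go through.
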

\begin{corollary}
\label{graphknotsgz}
Every graph knot is an integer pseudo-graph knot, $\mathcal{G}_{0}\subset\mathcal{G}_{\mathbb{Z}}$.
\end{corollary}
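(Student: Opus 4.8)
The plan is a structural induction on the way a graph knot is assembled. Recall, as developed in Section~\ref{knotfam}, that $\mathcal{G}_0$ is the smallest family of knots containing the unknot $U$ and closed under $(p,q)$-cabling, $K\mapsto[(p,q),K]$, and connected sum, $(K_1,K_2)\mapsto K_1\#K_2$; equivalently, every $K\in\mathcal{G}_0$ is produced by some finite rooted tree whose leaves are copies of $U$, whose internal vertices carry either a cabling datum $(p,q)$ or a connected-sum operation, and whose root outputs $K$. Let $n(K)$ be the number of internal vertices of such a tree. I would prove that $K\in\mathcal{G}_{\mathbb{Z}}$ for every $K\in\mathcal{G}_0$ by induction on $n(K)$, feeding Theorems~\ref{gzconnect} and~\ref{gzcable} into the inductive step.

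For the base step I would take the enlarged base consisting of the unknot together with all torus knots. For the unknot, $A_U\doteq L-1=LM^{0}-1$, so $U\in\mathcal{G}_{\mathbb{Z}}$ with a one-element indexing set ($r_1=0$, $\delta_1=1$). For a torus knot $T(p,q)=[(p,q),U]$ one records the classical factorization $A_{T(p,q)}\doteq(L-1)F_{(p,q)}(L,M)$, where $F_{(p,q)}$ is the binomial product appearing in Remark~\ref{iteratedtorus}; this is precisely the value the formula of Theorem~\ref{gzcable} takes upon the admissible substitution $A_U\doteq L-1$, and it exhibits $T(p,q)\in\mathcal{G}_{\mathbb{Z}}$. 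For the inductive step, assume $n(K)\geq 1$, that $K$ is not a torus knot, and that every graph knot built with strictly fewer internal vertices already lies in $\mathcal{G}_{\mathbb{Z}}$. Inspecting the root of a tree for $K$: if $K=K_1\#K_2$ then each $K_i$ is a graph knot with $n(K_i)<n(K)$, hence in $\mathcal{G}_{\mathbb{Z}}$; discarding any summand equal to $U$ (which changes nothing), the remaining summands are nontrivial and Theorem~\ref{gzconnect} gives $K\in\mathcal{G}_{\mathbb{Z}}$. If instead $K=[(p,q),C]$ then $C$ is a graph knot with $n(C)<n(K)$, hence $C\in\mathcal{G}_{\mathbb{Z}}$; since $K$ is not a torus knot, $C\neq U$ is nontrivial, so Theorem~\ref{gzcable} applies verbatim and gives $K\in\mathcal{G}_{\mathbb{Z}}$. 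In every case the passage to $\mathrm{Red}[\,\cdot\,]$ only deletes repeated binomial factors, so it does not leave $\mathcal{G}_{\mathbb{Z}}$. This closes the induction and yields $\mathcal{G}_0\subset\mathcal{G}_{\mathbb{Z}}$.

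I expect the only real friction to be the bookkeeping at the bottom of the construction tree: Theorems~\ref{gzconnect} and~\ref{gzcable} are stated for nontrivial knots, so connected summands equal to the unknot and the innermost cable $[(p,q),U]$ have to be dealt with by hand — this is exactly why the base step is enlarged to include the unknot and the torus knots. A secondary point worth making explicit is that $\mathrm{Red}$ is harmless here, since removing a repeated factor $LM^{r}-\delta$ from a product of such factors leaves a product of such factors. Beyond this, and beyond recalling that $A_U\doteq L-1$ and the torus-knot $A$-polynomial, the argument is a formal induction with all the substantive content supplied by Theorems~\ref{gzconnect} and~\ref{gzcable}.
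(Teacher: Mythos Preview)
Your proposal is correct and follows essentially the same approach as the paper: the paper simply observes (at the end of Section~\ref{proof1}) that Theorems~\ref{gzconnect} and~\ref{gzcable} show $\mathcal{G}_{\mathbb{Z}}$ is closed under connected sum and $(p,q)$-cabling, whence $\mathcal{G}_0\subset\mathcal{G}_{\mathbb{Z}}$. Your version is simply the explicit structural induction underlying that one-line observation, with the extra care about the unknot and torus knots at the base exactly compensating for the ``nontrivial'' hypothesis in the theorem statements (in fact the paper's proof of Theorem~\ref{gzcable} quietly handles $C=U$ as well).
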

This gives us the containment $\mathcal{G}_{0}\subset\mathcal{G}_{\mathbb{Z}}\subset\mathcal{G}_{\mathbb{Q}}\subset\mathfrak{M}_{0}$.
Recall the hyperbolic volume of the exterior of a knot $\mathrm{Vol}(\mathcal{M}_{K})$ is the sum of the volumes of the hyperbolic pieces in its JSJ-decomposition.
Since the graph knots are exactly the knots whose exteriors have zero hyperbolic volume, Theorems~\ref{gzconnect} and~\ref{gzcable} imply the forward direction of the following conjecture:
\begin{conjecture}
\label{apolyvol}
A knot exterior $\mathcal{M}_{K}$ has $\mathrm{Vol}(\mathcal{M}_{K})=0$ if and only if $\mathrm{m}(A_{K})=0$.
Equivalently,
$\mathcal{G}_{0}=\mathfrak{M}_{0}$.
\end{conjecture}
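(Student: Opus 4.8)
The plan is to treat the biconditional as two inclusions of knot families, $\mathcal{G}_0\subseteq\mathfrak{M}_0$ and $\mathfrak{M}_0\subseteq\mathcal{G}_0$, since the first is already in hand and only the second needs new ideas. For the forward inclusion I would invoke Corollary~\ref{graphknotsgz} together with the chain $\mathcal{G}_0\subseteq\mathcal{G}_{\mathbb{Z}}\subseteq\mathcal{G}_{\mathbb{Q}}\subseteq\mathfrak{M}_0$ built from Theorems~\ref{gzconnect} and~\ref{gzcable}: a knot with $\mathrm{Vol}(\mathcal{M}_K)=0$ is a graph knot, its $A$-polynomial lies in $\mathcal{G}_{\mathbb{Z}}$, every factor is a binomial $LM^{r}-\delta$ of vanishing Mahler measure, and additivity of $\mathrm{m}$ over products gives $\mathrm{m}(A_K)=0$. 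This is exactly the implication the present paper establishes.

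For the converse, $\mathrm{m}(A_K)=0\Rightarrow\mathrm{Vol}(\mathcal{M}_K)=0$, I would first convert the analytic hypothesis into an algebraic one. Since $\mathrm{m}$ is additive over products and non-negative on $\mathbb{Z}[L^{\pm1},M^{\pm1}]$ by Jensen's formula, $\mathrm{m}(A_K)=0$ forces every irreducible factor of $A_K$ to have vanishing Mahler measure. By the several-variable analogue of Kronecker's theorem (Boyd, Smyth), such a factor is, up to a monomial unit, generalized cyclotomic: it divides some $\Phi_n(L^{a}M^{b})$, so on its zero locus the monomial $L^{a}M^{b}$ is pinned to a fixed root of unity and is therefore constant along that irreducible component. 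The converse thus reduces to the purely topological assertion that \emph{if every irreducible factor of $A_K$ carries a constant nontrivial monomial $L^{a}M^{b}$, then $\mathcal{M}_K$ is a graph manifold}, which I would prove by contraposition.

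The core of the contrapositive uses the $SL_{2}\mathbb{C}$-character variety and Thurston's deformation theory. Take first $K$ hyperbolic, so $\mathcal{M}_K$ has one cusp, a discrete faithful holonomy $\rho_0$, and a one-dimensional geometric component $X_0$ of the character variety contributing a factor $g(L,M)$ to $A_K$. By Neumann--Zagier the peripheral restriction of $X_0$ satisfies, at the complete structure, $\frac{d(\log L)}{d(\log M)}\big|_{\rho_0}=\tau$, where $\tau$ is the cusp shape of $\mathcal{M}_K$, a non-real complex number since the cusp cross-section is a genuine Euclidean torus. If some monomial were constant along $g=0$, say $a\log L+b\log M$ is locally constant, then $\frac{d(\log L)}{d(\log M)}=-b/a\in\mathbb{Q}\subset\mathbb{R}$, contradicting $\tau\notin\mathbb{R}$. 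Hence $g$ is not generalized cyclotomic, $\mathrm{m}(g)>0$, and $\mathrm{m}(A_K)\geq\mathrm{m}(g)>0$, so hyperbolic knots cannot lie in $\mathfrak{M}_0$.

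The remaining, and harder, case is a satellite knot with $\mathrm{Vol}(\mathcal{M}_K)>0$, where the hyperbolic geometry lives in a JSJ piece $N$ with several boundary tori rather than in $\mathcal{M}_K$ directly; here I expect the main obstacle. One must follow the discrete faithful holonomy of $N$ through the Seifert-fibered neighbours and the cabling and connected-sum gluings out to the boundary torus of $\mathcal{M}_K$, and show that the resulting component still restricts nontrivially to the meridian and longitude of $K$, so that it leaves a non-generalized-cyclotomic factor in $A_K$ rather than being absorbed or collapsed by the gluing. A second subtlety, already visible for $\mathcal{G}_{\mathbb{Q}}$, is that vanishing Mahler measure permits genuinely non-binomial factors such as $\Phi_3(LM^{r})=L^{2}M^{2r}+LM^{r}+1$, so it never suffices to show a factor is not a binomial; one must exclude every generalized-cyclotomic form, which is ultimately a statement about the arithmetic of the invariant trace field of $N$. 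Making this quantitative --- ideally a bound of the shape $\mathrm{m}(A_K)\geq c\,\mathrm{Vol}(\mathcal{M}_K)$ obtained from the Bloch--Wigner dilogarithm and the regulator, in the spirit of Boyd's volume conjectures --- would close the converse, and it is precisely this deep input that currently keeps the statement a conjecture rather than a theorem.
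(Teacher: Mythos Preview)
This statement is labeled a \emph{conjecture} in the paper, and the paper does not claim a proof of the full biconditional. What the paper establishes is exactly the forward implication $\mathrm{Vol}(\mathcal{M}_K)=0\Rightarrow\mathrm{m}(A_K)=0$ via Theorems~\ref{gzconnect} and~\ref{gzcable} and Corollary~\ref{graphknotsgz}, together with the partial converse results that no hyperbolic knot lies in $\mathfrak{M}_0$ (Corollary~\ref{m0nohyp}) and that no satellite with hyperbolic companion and nonzero winding number lies in $\mathfrak{M}_0$ (Corollary~\ref{m0nohypsat}). The general satellite case, in particular winding number zero satellites over non-graph companions and hyperbolic satellite spaces over graph companions, is left open.

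Your proposal is aligned with this state of affairs and is honest about it. Your forward direction is identical to the paper's. For the hyperbolic case you take a genuinely different route: the paper invokes the result of Ni--Zhang (Remark~\ref{nohyp}) that a hyperbolic knot has a balanced-irreducible factor whose Newton polygon is not a single edge, and then appeals to Remark~\ref{mahler0prim} to conclude positive Mahler measure; you instead argue via Neumann--Zagier that the cusp shape $\tau$ is non-real, forcing $d(\log L)/d(\log M)$ to be irrational on the geometric component, which rules out any generalized-cyclotomic factor. Both arguments reach the same conclusion; yours is more self-contained geometrically, while the paper's is shorter given the cited input. You do not separately treat the nonzero-winding-number satellite case that the paper handles in Corollary~\ref{m0nohypsat} via Remark~\ref{nohypcomp}, though your cusp-shape idea could in principle be pushed through the resultant there as well.

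Where you say ``here I expect the main obstacle'' and close by noting that this is ``precisely this deep input that currently keeps the statement a conjecture,'' you are correct and in agreement with the paper: neither your sketch nor the paper supplies the missing argument propagating a non-cyclotomic factor from a hyperbolic JSJ piece through arbitrary Seifert-fibered gluings to $\partial\mathcal{M}_K$. So your proposal is not a proof of the conjecture, and does not purport to be; as a summary of what is known and where the difficulty lies, it is accurate.
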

This conjecture comes from the above containments and a connection between hyperbolic volume and the logarithmic Mahler measure in the case of the $A$-polynomial $A_{K}(L,M)$, as discussed in \cite{gm_2018}.
Notice that $m(P\cdot Q)=m(P)+m(Q)$ and the logarithmic Mahler measure is invariant under normalization.
By the following remarks, it is known that no hyperbolic knots will be in $\mathfrak{M}_{0}$ using the fact that the $A$-polynomial of a knot is a primitive polynomial since it can be explicitly computed via resultants~\cite{dl_2006}:
\begin{remark}\cite[Corollary 2.4]{nz_2017}
\label{nohyp}
If $K$ is a hyperbolic knot, then there is a balanced-irreducible factor $f_{K}$ of $A_{K}$ which is not the sum of two monomials in $L$ and $M$.
\end{remark}
\begin{remark}
\label{mahler0prim}
\cite[Theorem 3.10]{ew_1999}, for any primitive polynomial $F(x_{1},\ldots,x_{n})\in\mathbb{Z}[x_{1}^{\pm1},\ldots,x_{n}^{\pm1}]$, $\mathrm{m}(F)=0$ if and only if $F$ is a monomial times a product of cyclotomic polynomials evaluated on monomials.
Recall a polynomial $f(L,M)\in \mathbb{Z}[L,M]$ is {\it primitive} if its content is the unit ideal $(1)$, that is, if the greatest common divisor of its coefficients is 1.
\end{remark}
\begin{remark}
\label{nohypcomp}
In Section~\ref{knotfam}, we discuss satellite knots $K=\mathrm{Sat}(P,C,f)$ for a companion knot $C$ and a pattern knot $P$ embedded in a solid torus $V$.
By \cite[Proposition 2.7]{nz_2017}, if the winding number $w$ of the embedded pattern knot $f(P)\subset V$ is nonzero, then every balanced-irreducible factor $f_{C}|A_{C}$ extends to some factor $f_{K}|A_{K}$ given by the following
$$
f_{K}(L,M)=\begin{cases}
\mathrm{Red}\left[\mathrm{Res}_{\overline{L}}\left[f_{C}(\overline{L},M^{w}),L-\overline{L}^{w}\right]\right]&:\hspace{4pt}\deg_{\overline{L}}f_{C}(\overline{L},M)\neq0,\\
f_{C}(M^{w})&:\hspace{4pt}\deg_{\overline{L}}f_{C}(\overline{L},M)=0.
\end{cases}
$$
\end{remark}

Recall that a slope on a torus $T=\partial\mathcal{M}_{K}$ is a simple closed curve $\gamma\subset\partial\mathcal{M}_{K}$ up to isotopy which does not bound a disc in $\partial\mathcal{M}_{K}$; a slope $\gamma$ can be denoted by a number $p/q\in\mathbb{Q}\cup\{\infty\}$ where $[\gamma]=[\lambda^{q}\mu^{p}]$ for the preferred framing $(\lambda,\mu)$ of $\partial\mathcal{M}_{K}$.
Note that the slope $\infty$ corresponds to the meridian $[\mu]$.
A {\it boundary slope} of a knot $K$ is a slope $\gamma$ in $\partial\mathcal{M}_{K}$ that is also the boundary of an essential surface in the knot exterior $\mathcal{M}_{K}$; a boundary slope can also be described using a number $p/q\in\mathbb{Q}\cup\{\infty\}$, similarly.
Here, a properly embedded surface $S$ in a 3-manifold is {\it essential} if $S$ is incompressible, orientable, boundary incompressible, not boundary parallel, and not a sphere.
The set of boundary slopes of the exterior of a knot $K$ is denoted $\mathcal{BS}_{K}$.
For a link $L$ of $n$-components, the set of boundary slope tuples $\mathcal{BS}_{L}$ is a collection of tuples $(m_{1},\ldots,m_{n})$ where each $m_{i}\in\mathbb{Q}\cup\{\infty,\varnothing\}$ corresponds to the slope of an essential surface along the $i$-th boundary component of $\mathcal{M}_{L}$, with $\varnothing$ denoting non-intersection with a particular component.

For a two-variable polynomial $f(L,M)=\sum_{i,j}c_{ij}L^{i}M^{j}$ the Newton polygon is the convex hull of the set of points $\left\{(i,j)\middle|c_{ij}\neq0\right\}$, denoted $\mathrm{Newt}(f)$.
The {\it strongly detected} boundary slopes of a knot are exactly the slopes of the edges of $\mathrm{Newt}(A_{K})$.
We denote the subset of strongly detected boundary slopes of a knot $K$ by $\mathcal{DS}_{K}$ to distinguish them from $\mathcal{BS}_{K}$.
Since $\mathrm{Newt}(A_{K})$ is the Minkowski sum of the Newton polygons of its factors, a factor $(LM^{r}-\delta)|A_{K}$ with $r\in\mathbb{Z}$ and $\delta\in\{\pm1\}$ contributes $r\in\mathcal{DS}_{K}$, sometimes called a {\it killing slope}.
For a knot $K\in\mathcal{G}_{\mathbb{Z}}$, the strongly detected boundary slopes $\mathcal{DS}_{K}$ can be read off by the power of $M$ in each factor, where at most two factors $LM^{r}+1$ or $LM^{r}-1$ contribute the same killing slope $r\in\mathcal{DS}_{K}$, allowing $r\in\mathbb{Z}$ up to normalization.

By Thurston's Geometrization Theorem, knots in $\mathbb{S}^{3}$ are either torus, hyperbolic, or satellite.
Every torus knot is a graph knot, and so will be in $\mathfrak{M}_{0}$ by Corollary~\ref{graphknotsgz}.
The balanced-irreducible factor $f_{K}$ from Remark~\ref{nohyp} cannot have $\mathrm{Newt}(f_{K})$ be a single edge, and so this factor $f_{K}$ will not be a cyclotomic polynomial evaluated on a Laurent monomial in $L$ and $M$; hence $\mathfrak{M}_{0}$ contains no hyperbolic knots.
Also, satellite knots $K=\mathrm{Sat}(P,C,f)$ with a hyperbolic companion knot $C$ and embedded pattern knot $f(P)$ with nonzero winding number are not contained in $\mathfrak{M}_{0}$, since the factor $f_{C}$ from Remark~\ref{nohyp} will extend to a factor $f_{K}$ of the satellite knot with $\mathrm{m}(f_{K})>0$ by Remark~\ref{nohypcomp}.
To address Conjecture~\ref{apolyvol}, it suffices to understand which satellite knots are in $\mathfrak{M}_{0}$ and if any of them have positive hyperbolic volume.

\begin{corollary}
\label{m0nohyp}
There are no hyperbolic knots in $\mathfrak{M}_{0}$.
\end{corollary}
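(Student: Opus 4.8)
The plan is to assemble the three ingredients already laid out above --- primitivity of $A_K$, the Mahler-measure dichotomy for primitive polynomials in Remark~\ref{mahler0prim}, and the non-binomial balanced-irreducible factor for hyperbolic knots in Remark~\ref{nohyp} --- and derive a contradiction from the assumption that some hyperbolic knot $K$ lies in $\mathfrak{M}_{0}$, i.e.\ satisfies $\mathrm{m}(A_K)=0$.

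First I would record that $A_K$ is a primitive element of $\mathbb{Z}[L,M]$: it is obtained as a resultant of integral polynomials (see~\cite{dl_2006}), so the gcd of its coefficients is $1$. Hence Remark~\ref{mahler0prim} applies and yields $A_K\doteq\prod_{k}\Phi_{n_k}(w_k)$, a product of cyclotomic polynomials evaluated on Laurent monomials $w_k$ in $L$ and $M$ (the overall monomial factor is absorbed by $\doteq$, and its coefficient is a unit because $A_K$ is primitive and each $\Phi_{n_k}(w_k)$ is primitive). By Gauss's lemma every irreducible factor of $A_K$ in $\mathbb{Z}[L,M]$ is itself primitive and divides one of the $\Phi_{n_k}(w_k)$. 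Writing $w_k=L^{a}M^{b}$ with $d=\gcd(a,b)$, a unimodular change of variables (an automorphism of $\mathbb{Z}[L^{\pm1},M^{\pm1}]$) turns $L^{a/d}M^{b/d}$ into a single variable $x$ and carries $\Phi_{n_k}(w_k)$ to $\Phi_{n_k}(x^{d})$, whose irreducible factors over $\mathbb{Z}$ are again cyclotomic polynomials $\Phi_m(x)$; transporting back, every irreducible factor $f$ of $A_K$ satisfies $f\doteq\Phi_m(L^{a'}M^{b'})$ for some $m$ and coprime pair $(a',b')$. In particular $\mathrm{Newt}(f)$ is a single segment in the direction $(a',b')$, i.e.\ a single edge.

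On the other hand, since $K$ is hyperbolic, Remark~\ref{nohyp} furnishes a balanced-irreducible factor $f_K\mid A_K$ which is not a sum of two monomials, and --- as noted in the paragraph preceding this corollary --- the Newton polygon of $f_K$ is \emph{not} a single edge. This contradicts the previous paragraph, so $\mathfrak{M}_{0}$ contains no hyperbolic knot.

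The step carrying the real content is that last assertion: for a hyperbolic knot, $\mathrm{Newt}(f_K)$ is genuinely two-dimensional. This is what excludes the one dangerous case, $f_K\doteq\Phi_m(L^{a'}M^{b'})$ with $m$ not a power of $2$ --- such an $f_K$ is not a sum of two monomials and yet still has zero Mahler measure. I would justify it by recalling that $f_K$ vanishes on the eigenvalue image of the geometric component of the $SL_2\mathbb{C}$-character variety of $\mathcal{M}_K$: if $\mathrm{Newt}(f_K)$ were the segment of slope $-a'/b'$, then $L^{a'}M^{b'}$ would be constant along that curve, forcing $a'\,\mathrm{d}\log L+b'\,\mathrm{d}\log M=0$ and hence a rational value for $\mathrm{d}\log L/\mathrm{d}\log M$; but at the discrete faithful representation this ratio is the cusp shape, which has nonzero imaginary part --- a contradiction. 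Alternatively one may invoke the Newton-polygon form of~\cite[Corollary 2.4]{nz_2017} directly. The remaining ingredients --- Gauss's lemma and the behaviour of $\Phi_n$ on monomials under a unimodular substitution --- are routine.
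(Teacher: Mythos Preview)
Your proof is correct and follows essentially the same line as the paper: combine primitivity of $A_K$, Remark~\ref{mahler0prim}, and Remark~\ref{nohyp}, noting that the balanced-irreducible factor $f_K$ for a hyperbolic knot cannot have $\mathrm{Newt}(f_K)$ a single edge and hence cannot occur as a factor of a cyclotomic polynomial evaluated on a monomial. In fact you supply more justification than the paper does---the paper simply asserts that $\mathrm{Newt}(f_K)$ is not a single edge, whereas you correctly isolate the dangerous case $f_K\doteq\Phi_m(L^{a'}M^{b'})$ with $m$ not a prime power and dispose of it via the cusp-shape argument (or the Newton-polygon form of~\cite[Corollary~2.4]{nz_2017}); you also spell out via Gauss's lemma and a unimodular substitution why every irreducible factor of $A_K$ must have a one-dimensional Newton polygon under the hypothesis $\mathrm{m}(A_K)=0$.
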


\begin{corollary}
\label{m0nohypsat}
If the winding number $w$ of an embedded pattern knot $f(P)\subset V$ is nonzero and $C$ is a hyperbolic companion knot, then $\mathrm{Sat}(P,C,f)$ is not in $\mathfrak{M}_{0}$.
\end{corollary}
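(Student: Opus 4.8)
Write $K=\mathrm{Sat}(P,C,f)$. The plan is to exhibit one factor of $A_{K}$ with strictly positive logarithmic Mahler measure; since $\mathrm{m}$ is additive over products and is nonnegative on nonzero integer Laurent polynomials, this forces $\mathrm{m}(A_{K})>0$, that is $K\notin\mathfrak{M}_{0}$. That factor will be produced by pushing a suitable factor of $A_{C}$ through the nonzero-winding-number formula of Remark~\ref{nohypcomp}.

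First I would isolate the companion factor. Since $C$ is hyperbolic, Remark~\ref{nohyp} provides a balanced-irreducible factor $f_{C}\mid A_{C}$ that is not the sum of two monomials in $L$ and $M$, and, just as in the discussion preceding Corollary~\ref{m0nohyp}, $\mathrm{Newt}(f_{C})$ is not a single edge. I would then check $\mathrm{m}(f_{C})>0$: the polynomial $A_{C}$ is primitive since it is computed via resultants \cite{dl_2006}, so by Gauss's Lemma its irreducible factor $f_{C}$ is primitive and Remark~\ref{mahler0prim} applies to it; were $\mathrm{m}(f_{C})=0$, irreducibility would force $f_{C}\doteq\Phi_{n}(L^{a}M^{b})$ for some $n$ and coprime $a,b$, whose Newton polygon is a single edge --- a contradiction. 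In particular $f_{C}$ is not a polynomial in $M$ alone, so $\deg_{\overline{L}}f_{C}\neq 0$ and the first case of Remark~\ref{nohypcomp} is the one in play; since $w\neq 0$ it yields a factor $f_{K}=\mathrm{Red}\!\left[\mathrm{Res}_{\overline{L}}\!\left[f_{C}(\overline{L},M^{w}),\,L-\overline{L}^{w}\right]\right]$ of $A_{K}$.

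The crux is then $\mathrm{m}(f_{K})>0$: one must see that the substitution $M\mapsto M^{w}$, the resultant against $L-\overline{L}^{w}$, and the reduction $\mathrm{Red}[\cdot]$ together cannot destroy positivity of the Mahler measure. I would argue as follows. Invariance of $\mathrm{m}$ under the monomial substitution $M\mapsto M^{w}$ (valid for $w\neq 0$) gives $\mathrm{m}\bigl(f_{C}(\overline{L},M^{w})\bigr)=\mathrm{m}(f_{C})$. Factoring $f_{C}(\overline{L},M^{w})=c(M)\prod_{k}\bigl(\overline{L}-\alpha_{k}(M)\bigr)$ over an algebraic closure of $\mathbb{C}(M)$, the resultant identity gives $\mathrm{Res}_{\overline{L}}\!\left[f_{C}(\overline{L},M^{w}),L-\overline{L}^{w}\right]=\pm\,c(M)^{w}\prod_{k}\bigl(L-\alpha_{k}(M)^{w}\bigr)$; then the Jensen-type formula $\mathrm{m}(L-\alpha(M))=\tfrac{1}{2\pi}\int_{0}^{2\pi}\log^{+}\!\bigl|\alpha(e^{i\phi})\bigr|\,\mathrm{d}\phi$, combined with $\log^{+}\!\bigl|\alpha^{w}\bigr|=w\log^{+}|\alpha|$ for the winding number $w\geq 1$, shows this resultant has Mahler measure $w\cdot\mathrm{m}\bigl(f_{C}(\overline{L},M^{w})\bigr)=w\cdot\mathrm{m}(f_{C})>0$. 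Finally $\mathrm{Red}[\cdot]$ only deletes repeated irreducible factors, each of nonnegative Mahler measure, so it cannot lower a positive Mahler measure to $0$; hence $\mathrm{m}(f_{K})>0$. The main obstacle is precisely this last paragraph --- controlling the Mahler measure through the resultant operation of Remark~\ref{nohypcomp} --- which becomes tractable once one observes that that operation acts on the $\overline{L}$-roots simply by $\alpha_{k}\mapsto\alpha_{k}^{w}$ (after $M\mapsto M^{w}$); the remaining steps are a direct assembly of Remarks~\ref{nohyp},~\ref{mahler0prim} and~\ref{nohypcomp} together with the primitivity of $A_{C}$.
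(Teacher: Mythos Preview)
Your proof is correct and follows the same approach the paper sketches in the paragraph immediately preceding the corollary: take the balanced-irreducible factor $f_{C}$ from Remark~\ref{nohyp}, push it through the nonzero-winding-number formula of Remark~\ref{nohypcomp}, and conclude $\mathrm{m}(f_{K})>0$. The paper states this in one sentence and does not justify the last step; your contribution is the explicit verification that the resultant operation $\overline{L}\mapsto\overline{L}^{w}$, $M\mapsto M^{w}$ scales the Mahler measure by $w$ (via Jensen's formula and $\log^{+}|\alpha^{w}|=w\log^{+}|\alpha|$) and that $\mathrm{Red}[\cdot]$ cannot kill positivity since all integer-polynomial factors have nonnegative Mahler measure. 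This is exactly the detail the paper leaves implicit, so there is no genuine difference in route.
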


Our primary focus will be satellite knots $\mathrm{Sat}(P,C,f)$ with $f(P)\subset V$ winding number zero and companion knot $C$ a graph knot.
Additionally, we will calculate a special case of when $C$ is the figure-eight knot and the satellite operation is the $r$-twisted Whitehead double for $-11\leq r\leq11$.
Since every knot $K$ has the factor $(L-1)|A_{K}$ corresponding to the component in the representation variety $R(\mathcal{M}_{K})$ containing the abelian representations, the nontrivial factor of $A_{K}$ is denoted by $\widetilde{A}_{K}=(L-1)^{-1}A_{K}$.
By~\cite{nz_2017}, for any satellite knot $K=\mathrm{Sat}(P,C,f)$, $A_{P}|A_{K}$ and so we denote the factor of $A_{K}$ that is not contributed by the $A$-polynomial of the pattern knot $\widetilde{F}_{K}=(A_{P})^{-1}A_{K}$, and computation of $A_{\mathrm{Sat}(P,C,f)}$ reduces to computing $A_{P}$ and $\widetilde{F}_{K}$.

For a killing slope $r\in\mathcal{DS}_{C}$, we will be interested in the knot $f(P)_{r}$ obtained from $f(P)$ in the 3-sphere $V(1/r)$ after $(1/r)$-Dehn filling; the knot exterior $\mathcal{M}_{f(P)_{r}}\cong V(1/r)-\overset{\circ}{N}(f(P))$ which is explained further in Section~\ref{zerodouble}.
There is an interesting connection between the $A$-polynomials $A_{f(P)_{r}}$ for each $r\in\mathcal{DS}_{C}$ and the $A$-polynomial of the satellite knot $A_{\mathrm{Sat}(P,C,f)}$ which suggests an approach to calculating the $A$-polynomials of many satellite knots.
\begin{theorem}
\label{wnzthm}
Let $C\in\mathcal{G}_{0}$ with strongly detected boundary slopes $\mathcal{DS}_{C}$, and let $K=\mathrm{Sat}(P,C,f)$ be a satellite knot whose embedded pattern knot $f(P)\subset V$ has winding number zero in $V$.
For each integer $r\in\mathcal{DS}_{C}$, if $V(1/r)$ is the $(1/r)$-Dehn filling of $V$, then $V(1/r)-\overset{\circ}{N}\left(f(P)\right)\cong\mathcal{M}_{f(P)_{r}}$ is the exterior the knot $f(P)_{r}$.
The $A$-polynomial of $K=\mathrm{Sat}(P,C,f)$ is given in terms of the $A$-polynomials of $f(P)_{r}$ for each $r\in\mathcal{DS}_{C}$:
$$
A_{K}=\mathrm{Red}\left[(L-1)\underset{r\in\mathcal{DS}_{C}}{\prod}\widetilde{A}_{f(P)_{r}}\right].
$$
\end{theorem}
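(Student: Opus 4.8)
The plan is to compute $A_K$ from the $SL_2\mathbb{C}$-character variety of $\pi_1(\mathcal{M}_K)$, organized by the satellite (equivalently JSJ) decomposition $\mathcal{M}_K\cong E(P)\cup_T\mathcal{M}_C$, where $E(P)=V-\overset{\circ}{N}(f(P))$ is the pattern exterior in the solid torus, $T=\partial V$ is the companion torus, and the gluing identifies the basis $(\mu_V,\lambda_V)$ of $H_1(T)$ with $(\mu_C,\lambda_C)$ on $\partial\mathcal{M}_C$. By van Kampen, $\pi_1(\mathcal{M}_K)=\pi_1(E(P))*_{\pi_1(T)}\pi_1(\mathcal{M}_C)$, so $R(\mathcal{M}_K)$ is the fiber product $R(E(P))\times_{R(T)}R(\mathcal{M}_C)$; since $A_K$, up to normalization and the standard abelian factor $L-1$, is the defining polynomial of the Zariski closure of the image of $R(\mathcal{M}_K)$ under restriction to the peripheral torus $\partial N(f(P))=\partial N(K)\subset\partial E(P)$ (note that the other boundary component $T$ of $E(P)$ is an internal JSJ torus), everything reduces to understanding which boundary characters on $T$ coming from $R(\mathcal{M}_C)$ fill across the gluing into $R(E(P))$, and what they contribute on $\partial N(K)$.

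Before the representation-theoretic core I would pin down the topological statement $V(1/r)-\overset{\circ}{N}(f(P))\cong\mathcal{M}_{f(P)_r}$. Filling $T=\partial V$ along the slope $\lambda_V\mu_V^{\,r}$ that records the killing slope $r\in\mathcal{DS}_C$ — a slope meeting $\mu_V$ algebraically once — attaches a solid torus to $V$ so that the resulting closed manifold is $S^3$ with $f(P)$ sitting inside as a knot $f(P)_r$, and its exterior is exactly $E(P)$ with that filling. Here the winding-number-zero hypothesis does real work: since $[f(P)]=0$ in $H_1(V)$, the pattern bounds a Seifert surface inside $V$, and this surface survives every such filling, so the $0$-framing of $f(P)_r$ equals the $V$-Seifert framing of $P$ independently of $r$; likewise $H_1(E(P))\cong\mathbb{Z}\langle\mu_P\rangle\oplus\mathbb{Z}\langle\lambda_V\rangle$ with $[\mu_V]=0$, so the meridian and $0$-longitude of $f(P)_r$ do not depend on $r$ either. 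This uniformity is what makes the product over $r$ in the statement homogeneous; for nonzero winding number the framing shifts and one is forced into the resultant expression of Remark~\ref{nohypcomp}.

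The heart of the argument is a dichotomy for $\rho\in R(\mathcal{M}_K)$ according to its restriction $\rho_C$ to $\pi_1(\mathcal{M}_C)$. If $\rho_C$ is reducible (in particular central), its restriction to $\pi_1(T)$ is conjugate into upper-triangular with $\lambda_C$ of eigenvalue $1$, which on the $E(P)$ side forces $\rho_P$ either to kill $\lambda_V=\lambda_V\mu_V^{\,0}$, giving a representation of $\mathcal{M}_{f(P)_0}$ (and $0\in\mathcal{DS}_C$ since $L-1\mid A_C$), or to lie in the $L=1$ locus contributing the factor $(L-1)$. If $\rho_C$ is irreducible then, because $C\in\mathcal{G}_0\subset\mathcal{G}_{\mathbb{Z}}$ has $A_C\doteq\prod_j(LM^{r_j}-\delta_j)$, the boundary restriction $\rho_C|_{\pi_1(T)}$ lies on one of the lines $LM^{r_j}=\delta_j$ and, being generically diagonalizable, satisfies $\rho_C(\lambda_C\mu_C^{\,r_j})=\delta_jI\in\{\pm I\}$; transporting across the gluing, $\rho_P(\lambda_V\mu_V^{\,r_j})=\pm I$, so $\rho_P$ — after twisting by the order-two character of $H_1(E(P))$ that is $-1$ on $\lambda_V$ and trivial on $\mu_P$ when $\delta_j=-1$, which leaves the eigenvalues $L,M$ on $\partial N(K)$ unchanged precisely because the winding number is zero — descends to a representation of $\pi_1(\mathcal{M}_{f(P)_{r_j}})$. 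Conversely any representation of $\mathcal{M}_{f(P)_r}$ pulls back to a $\rho_P$ killing $\lambda_V\mu_V^{\,r}$, whose induced boundary character on $T$ sits on $LM^r=\delta$ and therefore extends over $\pi_1(\mathcal{M}_C)$ — this is the content of $(LM^r-\delta)\mid A_C$, that the corresponding component of $R(\mathcal{M}_C)$ dominates that line, together with the graph-manifold structure of $\mathcal{M}_C$ which guarantees no further boundary characters intervene. Restricting all these families to $\partial N(K)$ and taking Zariski closures identifies the image curve with the zero locus of $(L-1)\prod_{r\in\mathcal{DS}_C}\widetilde A_{f(P)_r}$; applying $\mathrm{Red}[\cdot]$ and normalizing gives the stated identity, and as a sanity check the $r=0$ term recovers the divisor $A_P\mid A_K$ from~\cite{nz_2017}.

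The step I expect to be the genuine obstacle is controlling the non-generic representations finely enough to get equality rather than a one-sided divisibility: one must verify that parabolic or non-diagonalizable boundary restrictions, reducible representations of $\mathcal{M}_C$ that become irreducible after the JSJ gluing, and representations of $E(P)$ that kill all of $\pi_1(T)$ contribute nothing on $\partial N(K)$ beyond the listed factors, and — in the converse direction — that each line $LM^{r_j}=\delta_j$ is genuinely swept out by characters of $\mathcal{M}_C$ that extend across $T$ rather than only being detected by a lower-dimensional piece. It is here that the hypothesis $C\in\mathcal{G}_0$ (Corollary~\ref{graphknotsgz}), not merely $C\in\mathfrak{M}_0$ or a knot with a convenient $A$-polynomial, is essential, since the explicit structure of graph-manifold character varieties is what rules out hidden components and unattained boundary behavior; the remaining $SL_2\mathbb{C}$ sign bookkeeping ($\lambda\mu^r\mapsto I$ versus $-I$) is harmless because both signs $\delta_j=\pm1$ record the same slope $r$ and produce the same filling $f(P)_r$.
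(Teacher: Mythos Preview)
Your overall architecture matches the paper's: decompose $R(\mathcal{M}_K)$ via the JSJ torus, split on reducibility of the companion restriction (the paper uses three strata $R_0,R_1,R_2$ according to reducibility of $\rho_1,\rho_2$, but the content is the same), use the quotient $Q_{r\ast}:\pi_1(\mathcal{M}_2)\to\pi_1(\mathcal{M}_{f(P)_r})$ and the sign-twist by an order-two character $\varepsilon$ to pass between $\rho_2(\lambda_y^{\,r}\mu_y)=\delta I$ and $=I$. Your framing discussion is correct and matches the paper's observation that winding number zero forces $\varepsilon(\lambda_x)=\varepsilon(\lambda_y)=I$.

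The genuine gap is exactly where you locate it, but it is larger than you indicate. For the converse direction you need: given any irreducible $\sigma\in R(\mathcal{M}_{f(P)_r})$, its lift $\rho_2^\varepsilon$ to $R(\mathcal{M}_2)$ must extend to some $\rho_1\in R(\mathcal{M}_C)$ agreeing on $T$. This requires that for \emph{every} value of $\rho_2^\varepsilon(\lambda_y)$ --- equivalently every $\overline{M}\in\mathbb{C}^\ast$ --- there is a $\rho_1\in R(\mathcal{M}_C)$ with $\rho_1(\mu_C)$ of eigenvalue $\overline{M}$, $\rho_1(\lambda_C\mu_C^{\,r})=\delta I$, and $\rho_1(\mu_C)\neq\pm I$. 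Saying ``$(LM^r-\delta)\mid A_C$'' only gives this generically, since $A_C$ is defined via a Zariski closure; a priori finitely many $\overline{M}$ could fail to be realized, and at $\overline{M}=\pm1$ one must separately rule out $\rho_1(\mu_C)=\pm I$. The paper packages this as the statement ``$A_C$ has no gaps'' (Theorem~\ref{g0nogaps}) and proves it by induction on the graph-knot structure: torus knots are small, so Culler--Shalen ideal-point theory (Lemma~\ref{smallnogap}) together with a Burde--de~Rham argument at $\overline{M}=\pm1$ (Lemma~\ref{irrednogap}) handles the base case; closure under connected sum is elementary (Lemma~\ref{sumnogaps}); closure under $(p,q)$-cabling (Lemma~\ref{cablenogaps}) is the delicate step and, for $|p|=1$, invokes Kronheimer--Mrowka on irreducible $SU(2)$-representations of homology spheres. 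Your phrase ``the explicit structure of graph-manifold character varieties'' gestures at this but does not supply it; without it you have only the divisibility $A_K\,\big|\,\mathrm{Red}\big[(L-1)\prod_r\widetilde{A}_{f(P)_r}\big]$, which is precisely Lemma~\ref{reps012} and already holds for $C\in\mathcal{G}_{\mathbb{Z}}$ --- the gap between Theorem~\ref{wnzthm} and Conjecture~\ref{wnzconj}.
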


Notice that the $A$-polynomial of the pattern knot will appear $A_{P}|A_{K}$ and agrees with this result since $(LM^{0}-1)|A_{C}$ and so $0\in\mathcal{DS}_{C}$, hence $A_{f(P)_{0}}=A_{P}$ is contained in the product on the right.
Also, for a given factor $(LM^{r}-\delta)$ for some $r\in\mathcal{DS}_{C}$, the choice of $\delta\in\{\pm1\}$ does not affect the corresponding factor, $\widetilde{A}_{f(P)_{r}}$.
Furthermore, we conjecture that this equality holds for every $C\in\mathcal{G}_{\mathbb{Z}}$:
\begin{conjecture}
\label{wnzconj}
Let $C\in\mathcal{G}_{\mathbb{Z}}$ with strongly detected boundary slopes $\mathcal{DS}_{C}$, and let $K=\mathrm{Sat}(P,C,f)$ be a satellite knot whose embedded pattern knot $f(P)\subset V$ has winding number zero in $V$.
Following the notation of Theorem~\ref{wnzthm}, the $A$-polynomial of $K=\mathrm{Sat}(P,C,f)$ is given by
$$
A_{K}=\mathrm{Red}\left[(L-1)\underset{r\in\mathcal{DS}_{C}}{\prod}\widetilde{A}_{f(P)_{r}}\right].
$$
\end{conjecture}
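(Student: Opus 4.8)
\noindent\emph{A strategy toward Conjecture~\ref{wnzconj}.}
The plan is to follow the proof of Theorem~\ref{wnzthm} as closely as possible, isolating the exact place where the hypothesis $C\in\mathcal{G}_{0}$ is used rather than merely $C\in\mathcal{G}_{\mathbb{Z}}$, and replacing that step by an argument that needs only the binomial factorization $A_{C}\doteq\prod_{j}(LM^{r_{j}}-\delta_{j})$. Split $\mathcal{M}_{K}=\mathcal{M}_{P}\cup_{T}\mathcal{M}_{C}$ along the companion torus $T=\partial V$, where $\mathcal{M}_{P}=V\setminus\overset{\circ}{N}(f(P))$ is the pattern space (with boundary tori $T$ and $\partial N(K)$) and $\mathcal{M}_{C}=\mathbb{S}^{3}\setminus\overset{\circ}{N}(C)$. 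Because the winding number is zero, a Seifert surface for $f(P)$ can be chosen inside $V$, so the preferred framing of $\partial N(K)$ restricts that of $\partial N(P)$ and $A_{K}$ is a defining polynomial for the closure of the image of the $SL_{2}\mathbb{C}$ character variety $X(\mathcal{M}_{K})$ in $X(\partial N(K))$. By van Kampen a character of $\pi_{1}(\mathcal{M}_{K})$ is a compatible pair of characters of $\pi_{1}(\mathcal{M}_{P})$ and $\pi_{1}(\mathcal{M}_{C})$ agreeing on $\pi_{1}(T)$, and the $\mathcal{M}_{C}$-part restricted to $\partial N(C)$ lies on $\{A_{C}=0\}=\{L=1\}\cup\bigcup_{j}\{LM^{r_{j}}=\delta_{j}\}$; equivalently, one eliminates the $T$-cusp variables from the $A$-polynomial of the two-cusped manifold $\mathcal{M}_{P}$ against $A_{C}$, and because $A_{C}$ splits this elimination splits as a product over $j$.

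\emph{The filling step.} On the component of $X(\mathcal{M}_{C})$ meeting $\{LM^{r_{j}}=\delta_{j}\}$ the slope-$r_{j}$ curve $\gamma_{r_{j}}$ of $\partial N(C)$ has $\rho(\gamma_{r_{j}})$ with characteristic polynomial $(t-\delta_{j})^{2}$, so it is either $\delta_{j}I$ or $\delta_{j}$ times a nontrivial parabolic; the parabolic locus is a proper closed subset of that one-dimensional component and hence invisible to Zariski closures, so, up to the sign-twist absorbing $\delta_{j}$ (which is why the choice of $\delta_{j}$ does not change $\widetilde{A}_{f(P)_{r_{j}}}$), the character factors through the Dehn filling killing $\gamma_{r_{j}}$. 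Via $f$ and the identification of $V(1/r_{j})$ from Theorem~\ref{wnzthm}, the $\mathcal{M}_{P}$-part then factors through $\pi_{1}\!\big(V(1/r_{j})\setminus\overset{\circ}{N}(f(P))\big)=\pi_{1}(\mathcal{M}_{f(P)_{r_{j}}})$. Taking closures over all $j$ yields the divisibility $A_{K}\mid(L-1)\prod_{r\in\mathcal{DS}_{C}}\widetilde{A}_{f(P)_{r}}$.

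\emph{The realization step, and the main obstacle.} The reverse divisibility requires that each factor on the right genuinely occurs: given $r_{j}\in\mathcal{DS}_{C}$ and a character of $\pi_{1}(\mathcal{M}_{f(P)_{r_{j}}})$, pull it back to a character of $\pi_{1}(\mathcal{M}_{P})$ killing $\gamma_{r_{j}}$; this prescribes a character $\phi$ of $\pi_{1}(T)$ factoring through $\pi_{1}(T)/\langle\gamma_{r_{j}}\rangle\cong\mathbb{Z}$, and one must extend it over $\mathcal{M}_{C}$ by some $\rho_{C}$ on the component $Y_{j}$ that restricts to $\{LM^{r_{j}}=\delta_{j}\}$. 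Existence of a $\rho_{C}\in Y_{j}$ with the correct peripheral \emph{eigenvalues} is automatic, since $LM^{r_{j}}-\delta_{j}$ is an honest factor of $A_{C}$, so $Y_{j}$ maps dominantly onto that line; the real point is to match the whole peripheral character, which fails only at the finitely many points where $\phi$ degenerates to a parabolic and is again invisible to closures --- \emph{provided} $Y_{j}\to X(\partial N(C))$ hits a dense subset of the line and no component of $X(\mathcal{M}_{C})$ over that line is entirely parabolic at $\gamma_{r_{j}}$. For $C\in\mathcal{G}_{0}$ both provisos are clear because $\mathcal{M}_{C}$ is a graph manifold and each $r_{j}$ is, up to a power, a Seifert-fiber slope of a JSJ piece, so representations killing $\gamma_{r_{j}}$ are plentiful and the restriction maps are explicit. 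For a general $C\in\mathcal{G}_{\mathbb{Z}}$ no such structure is available, and establishing this gluing-surjectivity --- in effect, that the binomial shape of $A_{C}$ forces the non-abelian part of $X(\mathcal{M}_{C})$ to behave on the boundary like that of a graph knot --- is the step I expect to be the main obstacle.

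Assembling the two divisibilities and applying $\mathrm{Red}[\,\cdot\,]$ to delete the overlaps gives the asserted identity; in particular the $r=0$ term is $\widetilde{A}_{f(P)_{0}}=\widetilde{A}_{P}$, which together with the leading $(L-1)$ reconstitutes $A_{P}$, matching $A_{P}\mid A_{K}$. Since the obstacle can arise only for $C\in\mathcal{G}_{\mathbb{Z}}\setminus\mathcal{G}_{0}$, a set that is empty exactly when Conjecture~\ref{apolyvol} holds, Conjecture~\ref{wnzconj} is essentially a sub-problem of Conjecture~\ref{apolyvol}; a reasonable intermediate target is to prove the formula under the explicit hypothesis that every $r\in\mathcal{DS}_{C}$ is a slope along which $\mathcal{M}_{C}$ Dehn-fills to a manifold whose fundamental group surjects onto the peripheral quotient required above, a hypothesis met by every graph knot and hence recovering Theorem~\ref{wnzthm}.
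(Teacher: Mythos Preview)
This statement is a \emph{conjecture} in the paper, not a theorem; the paper does not prove it, and explicitly notes (just after stating it) that it would follow from Conjecture~\ref{apolyvol} since that would force $\mathcal{G}_{\mathbb{Z}}=\mathcal{G}_{0}$. Your submission is appropriately framed as a strategy rather than a proof, and your analysis of where the argument breaks is accurate and matches the paper's own structure.

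Concretely, the paper establishes the divisibility $A_{K}\mid\mathrm{Red}\big[(L-1)\prod_{r}\widetilde{A}_{f(P)_{r}}\big]$ for all $C\in\mathcal{G}_{\mathbb{Z}}$ in Lemma~\ref{reps012} (your ``filling step''), and upgrades it to equality only for $C\in\mathcal{G}_{0}$ in Lemma~\ref{reps012d} (your ``realization step''). The extra ingredient the paper uses for the reverse direction is precisely what you isolate: the existence, for every point $(L_{0},M_{0})\in\mathbb{V}(LM^{r}-\delta)\cap(\mathbb{C}^{\ast})^{2}$, of a representation $\rho_{1}\in R(\mathcal{M}_{C})$ with $\rho_{1}(\lambda_{C}\mu_{C}^{r})=\delta I$ and $\rho_{1}(\mu_{C})\neq\pm I$. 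The paper packages this as the condition ``$A_{C}$ has no gaps'' (see the definitions before Lemma~\ref{smallnogap}) and proves it for graph knots by an induction through Corollary~\ref{torusnogaps} and Lemmas~\ref{sumnogaps}--\ref{cablenogaps}, culminating in Theorem~\ref{g0nogaps}. Your ``gluing-surjectivity'' hypothesis is exactly this no-gaps property, so your proposed intermediate target---prove the formula assuming each $r\in\mathcal{DS}_{C}$ admits the required peripheral realization---is literally the hypothesis of Lemma~\ref{reps012d}.

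One small sharpening: in your filling step you argue that the parabolic locus on a component $Y_{j}$ is a proper closed subset ``and hence invisible to Zariski closures.'' That is fine for the direction $A_{K}\mid\prod(\cdots)$, but note that the paper's Lemma~\ref{reps012} actually asserts something slightly stronger, namely that on any such component one may take $\rho_{1}(\lambda_{C}\mu_{C}^{r})=\delta I$ exactly (not just generically), via the $\varepsilon$-twist. Either route suffices for the containment. The genuine open point, as you correctly conclude, is whether the binomial factorization of $A_{C}$ alone forces the no-gaps property; the paper offers no mechanism for this beyond the graph-manifold structure, and neither do you---which is the honest state of affairs.
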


This conjecture will be discussed in Section~\ref{proof2} after the proof of Theorem~\ref{wnzthm}; however, since the graph knots are contained in $\mathcal{G}_{\mathbb{Z}}$, Conjecture~\ref{apolyvol} would also imply the above conjecture.
The simplest nontrivial family of $A$-polynomials from Theorem~\ref{wnzthm} are the $n$-twisted Whitehead doubles of graph knots, written in terms of the $A$-polynomials of twist knots $K(n)$:
\begin{theorem}
\label{doublellin}
Let $C\in\mathcal{G}_{0}$ and let $\mathcal{DS}_{C}$ be the set of its strongly detected boundary slopes, then the $n$-twisted Whitehead double of $C$, $D_{n}(C)$ has $A$-polynomial:
\begin{align*}
A_{D_{n}(C)}=(L-1)\underset{r\in\mathcal{DS}_{C}}{\prod}\widetilde{A}_{K(n-r)}.
\end{align*}
\end{theorem}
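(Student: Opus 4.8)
The plan is to present $D_n(C)$ as a winding-number-zero satellite and apply Theorem~\ref{wnzthm}. Write $D_n(C)=\mathrm{Sat}(P_n,C,f)$, where $V$ is a model solid torus, $P_n\subset V$ is the $n$-twisted Whitehead double pattern (two strands running antiparallel along $V$, clasped, with $n$ full twists in the band), and $f$ embeds $V$ onto a tubular neighbourhood of $C$ with its $0$-framing. The antiparallel strands give winding number $0$, and $C\in\mathcal{G}_0$ by hypothesis, so Theorem~\ref{wnzthm} applies:
\[
A_{D_n(C)}=\mathrm{Red}\left[(L-1)\prod_{r\in\mathcal{DS}_C}\widetilde{A}_{f(P_n)_r}\right].
\]
Two things then remain: to identify the knot $f(P_n)_r$ with the twist knot $K(n-r)$, and to verify that the product on the right is already square-free, so that $\mathrm{Red}$ is vacuous.

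For the first, recall from Theorem~\ref{wnzthm} that $\mathcal{M}_{f(P_n)_r}\cong V(1/r)-\overset{\circ}{N}(P_n)$, where $V(1/r)$ is the $(1/r)$-Dehn filling of $\partial V$. This filling may be viewed as a Rolfsen twist: it is $(1/r)$-surgery on the unknotted core of the solid torus complementary to $V$ in $\mathbb{S}^3$, which returns $\mathbb{S}^3$, keeps $V$ standardly embedded with unknotted core, and adds $r$ full twists (of the appropriate sign) to the strands of $P_n$ that thread the corresponding meridian disk. Since exactly the two strands of the band pass through that disk, the effect is to turn $P_n$ into an embedded copy of $P_{n-r}$; hence $f(P_n)_r$ is the $(n-r)$-twisted Whitehead double of the unknot, which is the twist knot $K(n-r)$. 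Consequently $\widetilde{A}_{f(P_n)_r}=\widetilde{A}_{K(n-r)}$. As a check, $0\in\mathcal{DS}_C$ always, since $(L-1)=(LM^0-1)\mid A_C$, and the corresponding factor $\widetilde{A}_{K(n)}$ together with $L-1$ reproduces $A_{P_n}=A_{K(n)}$, consistent with $A_{P_n}\mid A_{D_n(C)}$.

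It remains to drop the $\mathrm{Red}$. It suffices to check that $L-1$ and the polynomials $\widetilde{A}_{K(n-r)}$, for $r$ ranging over $\mathcal{DS}_C$, are pairwise coprime and individually square-free. Using the explicit formula for the $A$-polynomials of twist knots in~\cite{hs_2004}: each $\widetilde{A}_{K(m)}$ is a nonconstant, square-free polynomial not divisible by $L-1$ (when $m=0$ it is simply $1$), and the twist knots $K(m)$ are pairwise distinct with pairwise distinct, coprime non-abelian $A$-polynomials; since the values $n-r$ are distinct for distinct $r\in\mathcal{DS}_C$, the factors are pairwise coprime. Hence the product $(L-1)\prod_{r\in\mathcal{DS}_C}\widetilde{A}_{K(n-r)}$ has no repeated factor and equals its reduction, giving the stated formula. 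The main obstacle is the identification in the second paragraph: one must track the orientation and framing conventions carefully enough to be sure the Dehn filling and the band-twisting combine to yield exactly $K(n-r)$, rather than $K(n+r)$ or $K(r-n)$.
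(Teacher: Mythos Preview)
Your proposal is correct and follows essentially the same route as the paper: apply Theorem~\ref{wnzthm} to the Whitehead-double pattern, identify $f(P_n)_r$ with $K(n-r)$ via the $(1/r)$-filling (Rolfsen twist) on $\partial V$, and then remove the $\mathrm{Red}$ using the Hoste--Shanahan results that each $\widetilde{A}_{K(m)}$ is irreducible and that distinct twist knots have distinct $A$-polynomials (the paper records these as Lemmas~\ref{irreducibletwists} and~\ref{uniquetwists}). Your caveat about the sign convention in the twist count is well taken but is resolved exactly as the paper does, by noting that the $(1/r)$-filling introduces $-r$ full twists.
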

Notice that this theorem omits the polynomial reduction.
The general construction of the $n$-twisted Whitehead double is given by Figure~\ref{fig1} in Section~\ref{zerodouble}, but this theorem can be used to immediately find many interesting families of $n$-twisted Whitehead doubles of graph knots, such as iterated torus knots and connected sums of torus knots, in terms of the $A$-polynomials of twist knots, which have known formulas by~\cite{hs_2004} and~\cite{mathews_2014}.

Theorem~\ref{doublellin} tells us that for any nontrivial knot $C$, $D_{n}(C)\not\in\mathcal{G}_{\mathbb{Z}}$, as discussed in Section~\ref{zerodouble}.
Generalizing further, we have the following result in terms of the $A$-polynomials of double twist knots $J(2m,2n)$ whose embedding is described with Figure~\ref{fig2} in Section~\ref{zerodouble}:
\begin{theorem}
\label{doubletwisteddouble}
Let $C\in\mathcal{G}_{0}$ and let $\mathcal{DS}_{C}$ be its set of strongly detected boundary slopes, then the $(m,n)$-double twisted double of $C$, $D_{m,n}(C)$ has $A$-polynomial:
\begin{align*}
A_{D_{m,n}(C)}=(L-1)\mathrm{Red}\left[\underset{r\in\mathcal{DS}_{C}}{\prod}\widetilde{A}_{J(2m,2(n-r))}\right].
\end{align*}
\end{theorem}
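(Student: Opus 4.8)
The plan is to obtain Theorem~\ref{doubletwisteddouble} as a direct application of Theorem~\ref{wnzthm}, the only genuine work being to identify the Dehn-filled pattern knots $f(P)_{r}$ explicitly as double twist knots. Since $C\in\mathcal{G}_{0}$, and since the embedded pattern $f(P)\subset V$ of the $(m,n)$-double twisted double has winding number zero by construction (Figure~\ref{fig2}), Theorem~\ref{wnzthm} applies to $K=\mathrm{Sat}(P,C,f)=D_{m,n}(C)$ verbatim and yields
$$
A_{D_{m,n}(C)}=\mathrm{Red}\left[(L-1)\underset{r\in\mathcal{DS}_{C}}{\prod}\widetilde{A}_{f(P)_{r}}\right].
$$
Because $C\in\mathcal{G}_{0}\subset\mathcal{G}_{\mathbb{Z}}$, its $A$-polynomial factors as a product of binomials $LM^{r}-\delta$, so $\mathcal{DS}_{C}\subset\mathbb{Z}$ and the product above ranges over integer slopes, exactly as required by Theorem~\ref{wnzthm}. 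Hence everything reduces to showing that, for each integer $r\in\mathcal{DS}_{C}$, the exterior $\mathcal{M}_{f(P)_{r}}\cong V(1/r)-\overset{\circ}{N}\left(f(P)\right)$ is the exterior of $J(2m,2(n-r))$.

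First I would set up the embedding of the $(m,n)$-double twisted double pattern from Figure~\ref{fig2}: a band in $V$ with two twist/clasp regions, one carrying $2m$ crossings and one carrying $2n$ crossings, with the two strands through a meridian disc of $V$ oriented oppositely so that the winding number is zero. Specializing to $C=U$, where $\mathcal{DS}_{U}=\{0\}$, shows that the pattern knot itself is $P=J(2m,2n)$; this is consistent with $A_{P}\mid A_{K}$ and with $A_{f(P)_{0}}=A_{P}$, and it generalizes Theorem~\ref{doublellin}.

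The key geometric step is then the following. For an integer slope $r$ the filling $V(1/r)$ is again $\mathbb{S}^{3}$, and because $f(P)$ has winding number zero the $(1/r)$-Dehn filling of $V$ amounts to inserting $r$ full twists into the two-strand band that threads the meridian disc. Tracking framings through Figure~\ref{fig2}, these twists merge with the $2n$-crossing region and leave the $2m$-crossing region untouched, so the image of $f(P)$ in $V(1/r)=\mathbb{S}^{3}$ is the double twist knot $J(2m,2(n-r))$, the sign being fixed by the $1/r$-filling convention and compatible with the $r=0$ specialization and with Theorem~\ref{doublellin}. Hence $\mathcal{M}_{f(P)_{r}}\cong V(1/r)-\overset{\circ}{N}\left(f(P)\right)\cong\mathcal{M}_{J(2m,2(n-r))}$, so $\widetilde{A}_{f(P)_{r}}=\widetilde{A}_{J(2m,2(n-r))}$ up to the standard normalization. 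Substituting into the displayed identity gives the claimed formula.

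I expect the main obstacle to be the careful, honest verification of this geometric step: reading off from Figure~\ref{fig2} exactly how the $(1/r)$-surgery solid torus links the band, checking that precisely $r$ twists are absorbed by the $n$-region and none by the $m$-region, and pinning down the sign so that the parameter becomes $n-r$ rather than $n+r$. Once the identification $f(P)_{r}=J(2m,2(n-r))$ is secure, the rest is immediate from Theorem~\ref{wnzthm}; one should also note that the outer $\mathrm{Red}[\cdot]$ is retained here because the factors $\widetilde{A}_{J(2m,2(n-r))}$ arising from distinct slopes $r\in\mathcal{DS}_{C}$ need not be pairwise coprime in general.
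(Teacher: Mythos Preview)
Your proposal is correct and takes essentially the same approach as the paper: the paper states that Theorems~\ref{doublellin} and~\ref{doubletwisteddouble} both follow as consequences of Theorem~\ref{wnzthm}, and in the proof of Theorem~\ref{doublellin} it records exactly the geometric fact you isolate, namely that the $(1/r)$-Dehn filling of $V$ is realized by $-r$ full twists on the band, so that $f(P)_{r}=J(2m,2(n-r))$ in the double-twisted case. Your write-up is in fact more explicit than the paper's, which leaves the identification for $D_{m,n}$ implicit after doing the $D_{n}$ case in detail; your remark on why $\mathrm{Red}[\cdot]$ must be retained here (unlike in Theorem~\ref{doublellin}) is also on point.
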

Here, the $n$-twisted Whitehead double of any knot $C$ is the special case $m=1$: $D_{n}(C)=D_{1,n}(C)$.
Other examples such as $(m_{1},\ldots,m_{k},n)$-twisted two-bridge doubles and $n$-twisted pretzel doubles can be constructed in terms of $A_{f(P)_{r}}$ following Theorem~\ref{wnzthm}.
\begin{remark}
Explicit formulas for the $A$-polynomials of all $(m,n)$-double twist knots are not currently known, however when $m$ is sufficiently small or $m=n$ we have formulas from Petersen~\cite{petersen_2015}.
Also, there are known symmetries of the double twist knots, such as $J(m,n)^{\ast}=J(-m,-n)$ and $J(m,n)=J(n,m)$ so we may assume that $m$ is always even (if both $m,n$ are odd, then $J(m,n)$ has two components).
\end{remark}

\begin{remark}
In Section \ref{whiteheadtwist}, we show that the $A$-polynomial of the Whitehead double over an arbitrary knot $C$ not in $\mathfrak{M}_{0}$ is much more involved.
For the figure-eight knot $C=K(-1)$, there are already difficulties in computing $A_{D_{n}(K(-1))}$ using resultant methods (or Groebner bases).
Note that the figure-eight is the simplest case in the more general problem of computing $A_{D_{n}(K(m))}$ for twist knot $K(m)$ with $m\neq0,1$.\end{remark}

In Section~\ref{apoly}, we remind the reader of the $A$-polynomial for knots in $\mathbb{S}^{3}$ and some of their properties.
In Section~\ref{knotfam}, we describe some families of knots, including torus knots, twist knots, satellite knots, graph knots, and integer pseduo-graph knots, as well as list relevant results about those knots.
In Section~\ref{proof1}, we prove Theorems~\ref{gzconnect} and~\ref{gzcable}, showing that all graph knots are in $\mathcal{G}_{\mathbb{Z}}$.
In Section~\ref{zerodouble}, we describe winding number zero satellite operations, discuss gaps in $A_{K}$, and show some results about representation varieties over winding number zero satellite knots when the companion knot is a graph knot.
In Section~\ref{rtwistedcomps}, we describe the twisted gluing relation used for explicit computations of $A_{D_{n}(C)}$, which can be used to computationally verify the results for when $C\in\mathcal{G}_{\mathbb{Z}}$ and is necessary for the calculations of $A_{D_{r}(K(-1))}$ from Section~\ref{whiteheadtwist}.
In Section~\ref{proof2}, we prove Theorem~\ref{wnzthm} with Theorems~\ref{doublellin} and~\ref{doubletwisteddouble} as special examples with known factors, and discuss Conjecture~\ref{wnzconj}.
In Section~\ref{whiteheadtwist}, we outline the resultant method for computing the $A$-polynomials of $D_{r}(K(-1))$.
In this case, a factor $Q_{K(-1),r}(L,M)$ appears in this resultant which cannot divide the $A$-polynomial because its Newton polygon has edges with slopes not in $\mathcal{BS}_{D_{r}(K(-1))}$.
Finally, in Section~\ref{conclusion}, we summarize and offer some remarks about further directions of investigation.

\section{The $A$-Polynomial}
\label{apoly}
The $A$-polynomial was defined by Cooper, Culler, Gillet, Long, Shalen~\cite{cooperetal_1994}, and we remind the reader here.
For a knot $K\subset\mathbb{S}^{3}$, its knot exterior is denoted $\mathcal{M}_{K}=\mathbb{S}^{3}-\overset{\circ}{N}(K)$ and its associated knot group, $\pi_{1}(\mathcal{M}_{K})$.
Within the knot group, the peripheral subgroup is denoted $\pi_{1}(\partial\mathcal{M}_{K})\cong\langle\mu_{K}\rangle\oplus\langle\lambda_{K}\rangle$ with generators $\mu_{K}$ (the meridian) and $\lambda_{K}$ (the preferred longitude) of $\partial\mathcal{M}_{K}$, and we call $(\lambda_{K},\mu_{K})$ the preferred framing of $\mathcal{M}_{K}$; here $\lambda_{K}$ is the homologically trivial longitudinal curve in $\pi_{1}(\mathcal{M}_{K})$ up to orientation.
The $SL_{2}\mathbb{C}$-representation variety of $\mathcal{M}_{K}$ is denoted $R(\mathcal{M}_{K})=\mathrm{Hom}\,(\pi_{1}(\mathcal{M}_{K}),SL_{2}\mathbb{C})$.
Taking our representations $\rho$ up to conjugacy class, we may find representations within those conjugacy classes which are upper-triangular on the peripheral subgroup and which satisfy the following, since $\mu_{K},\lambda_{K}$ commute:
\begin{align*}
\rho(\mu_{K})&=\begin{pmatrix}
M&\ast\\
0&M^{-1}\end{pmatrix}
&\rho(\lambda_{K})&=\begin{pmatrix}
L&\ast\\
0&L^{-1}\end{pmatrix}.
\end{align*}
The set of these representations is denoted by $R_{U}(\mathcal{M}_{K})$, and the projection map $\xi:R_{U}(\mathcal{M}_{K})\to\mathbb{C}^{2}$ given by $\xi(\rho)=(L,M)$ is well-defined and the Zariski closure of the image $\overline{\mathrm{im}\,\xi}$ is a complex-curve from which we can define a two-variable polynomial $A_{K}\in\mathbb{Z}[L,M]$ (unique up to sign) with:
\begin{enumerate}[(1)]
\item
$\overline{\mathrm{im}\,\xi}$ is the zero set of $A_{K}(L,M)$, that is $\overline{\mathrm{im}\,\xi}=\mathbb{V}(A_{K})$ where $\mathbb{V}(f)$ denotes the zero locus of polynomial $f$;
\item
the polynomial $A_{K}$ has no repeated factors and is in $\mathbb{Z}[L,M]$ after nonzero scaling;
\item
the polynomial $A_{K}$ can be normalized so that the coefficients are relatively prime.
\end{enumerate}
This polynomial is the {\it$A$-polynomial of $K$}, and $A_{K}$ is known to have only even powers of $M$:
$$
A_{K}(L,M)=\underset{i,j}{\sum}a_{i,2j}L^{i}M^{2j}.
$$
Here, we will only consider knots in $\mathbb{S}^{3}$, but for a more in-depth discussion, see~\cite{cooperetal_1994}.

For $(L,M)\in\mathbb{C}^{\ast}\times\mathbb{C}^{\ast}$, denote the involution $\tau(L,M)=(L^{-1},M^{-1})$ and say that a polynomial $f(L,M)$ is {\it balanced} if $f\circ\tau\doteq f$, that is,
$$
(f\circ\tau)(L,M)=\sigma L^{a}M^{b}f(L,M)
$$
for some $a,b\in\mathbb{Z}$ and $\sigma\in\{\pm1\}$.
\begin{remark}\cite{cooperetal_1994}
\label{trivfactor}
For any knot $K$, $(L-1)|A_{K}$; that is, $0\in\mathcal{DS}_{K}$.
\end{remark}
\begin{remark}\cite{cl_1997}
\label{symmetry}
For any knot $K$, $A_{K}$ is balanced.
\end{remark}
Therefore, for any irreducible factor $f|A_{K}$, either $f$ is balanced or its involution $(f\circ\tau)$ is also factor of $A_{K}$.
We note that an irreducible factor which is the sum of two monomials in $L$ and $M$ is balanced.
\begin{remark}\cite{cl_1997}
\label{mirrored}
For any knot $K$, its mirror image $K^{\ast}$ has $A$-polynomial given by $A_{K^{\ast}}(L,M)\doteq A_{K}\left(L,M^{-1}\right)$.
\end{remark}

We will also make use of the {\it $SL_{2}\mathbb{C}$-character variety} of $\mathcal{M}_{K}$, where each character $\chi_{\rho}:\pi_{1}(\mathcal{M}_{K})\to\mathbb{C}$ is given by $\chi_{\rho}(g)=\mathrm{tr}\rho(g)$, and the character variety is denoted
$$
X(\mathcal{M}_{K})=\{\chi_{\rho}|\rho\in R(\mathcal{M}_{K})\}.
$$
A construction of the $A$-polynomial based on the character variety is provided in~\cite{cooperetal_1994}, which will be summarized here.
Note that for every balanced-irreducible factor $f_{0}|\widetilde{A}_{K}$, there is a component $X_{0}$ in $X(\mathcal{M}_{K})$ which contributes this factor.
The inclusion $i:\partial\mathcal{M}_{K}\to\mathcal{M}_{K}$ induces the map $\widehat{i}_{\ast}:X(\mathcal{M}_{K})\to X(\partial\mathcal{M}_{K})$, and the algebraic map $\tau:R(\partial\mathcal{M}_{K})\to X(\partial\mathcal{M}_{K})$ given by $\tau(\rho)=\chi_{\rho}$ restricts to a degree 2 regular surjective map on the subset $\Lambda\subset R(\partial\mathcal{M}_{K})$ consisting of representations which are diagonal on the generators $\mu_{K},\lambda_{K}$.
$$
\begin{tikzcd}
&R(\mathcal{M}_{K})\arrow[r,"\tau"]\arrow[d]&X(\mathcal{M}_{K})\arrow[d,"\widehat{i}_{\ast}"]\\
\mathbb{C}^{2}\supset\mathbb{C}^{\ast}\times\mathbb{C}^{\ast}&R(\partial\mathcal{M}_{K})\arrow[r,"\tau"]\arrow[l,"\xi"]&X(\partial\mathcal{M}_{K})\\
&\Lambda\arrow[u]\arrow[ru,"\tau|_{\Lambda}" below]\arrow[lu,"\xi"]&
\end{tikzcd}
$$
The Zariski closure $\overline{\xi\left(\left(\tau|_{\Lambda}\right)^{-1}\left(\overline{\widehat{i}_{\ast}(X_{0})}\right)\right)}=D_{0}$ is a 1-dimensional variety in $\mathbb{C}^{2}$ given by $\mathbb{V}(f_{0})=D_{0}$.
The projective completion $\widetilde{X}_{0}$ and ideal points $\widetilde{x}\in\widetilde{X}_{0}$ will be used in Section~\ref{zerodouble} in the discussion of gaps of $A_{K}$.

\section{Some Families of Knots}
\label{knotfam}
Let $T(p,q)$ denote the $(p,q)$-torus knot which is an embedded simple closed curve on an unknotted torus $T^{2}$ in $\mathbb{S}^{3}$ in the homotopy class $[\mu^{p}\lambda^{q}]\in\pi_{1}(T^{2})$ where $\mu,\lambda$ are the standard meridian and longitude curves on the torus and $p,q$ are relatively prime integers.
Also notice that $T(p,q)=T(q,p)$ (using the complementary solid torus in $\mathbb{S}^{3}$), so we take the $(p,q)$-torus knot so that $|p|>q\geq2$ for relatively prime $p,q$ to avoid repetition.
Notice that its mirror image $T(p,q)^{\ast}=T(-p,q)$.

The family of 2-bridge knots $J(k,\ell)$ with $k$ vertical half-twists and $\ell$ horizontal half-twists are referred to as double twist knots, depicted below; for the right-handed trefoil knot $3_{1}^{+}=T(3,2)=J(2,2)$.

$$
\begin{tikzpicture}
		\begin{knot}[
		line width=1.5pt,
		line join=round,
		clip width=1,
		scale=2,
		background color=white,
		consider self intersections,
		only when rendering/.style={
			draw=white,
			double=black,
			double distance=1.5pt,
			line cap=none
		}
		]
\strand
(-1.45,.6) to ++(.3,0);
\strand
(.55,.6) to ++(.3,0);
\strand
(0,0) to +(.4,0)
arc (-90:0:.3)
to +(0,1.2)
arc (0:90:.3)
to +(-.4,0)
arc (-270:-180:.15)
.. controls +(0,-.15) and +(0,.15) .. ++(-.3,-.3)
.. controls +(0,-.15) and +(0,.15) .. ++(.3,-.3)
arc (-180:-90:.15)
to +(.25,0)
arc (90:0:.15)
to +(0,-.3)
arc (0:-90:.15)
to ++(-1.1,0)
arc (-90:-180:.15)
to ++(0,.3)
arc (180:90:.15)
to ++(.25,0)
arc (-90:0:.15)
.. controls +(0,.15) and +(0,-.15) .. ++(.3,.3)
.. controls +(0,.15) and +(0,-.15) .. ++(-.3,.3)
arc (0:90:.15)
to ++(-.4,0)
arc (90:180:.3)
to ++(0,-1.2)
arc (-180:-90:.3)
to ++(1,0);
\flipcrossings{1,3}
\end{knot}
\draw[very thick, fill=white] (0,.7) to ++(0,-.8) to ++(-1.2,0) to ++(0,.8) to ++(1.2,0) -- cycle;
\draw[very thick, fill=white] (-.2,3.3) to ++(-.8,0) to ++(0,-1.2) to ++(.8,0) to ++(0,1.2) -- cycle;
\draw[very thick, -latex] (-2.3,1.2) to ++(.1,0);
\draw[very thick, -latex] (1.1,1.2) to ++(-.1,0);
\node[left] at (-2.9,1.2) {$a$};
\node[right] at (1.7,1.2) {$b$};
\node at (-.6,.3) {$\ell$};
\node at (-.6,2.7) {$k$};
\end{tikzpicture}
\hspace{20pt}
\begin{tikzpicture}
		\begin{knot}[
		line width=1.5pt,
		line join=round,
		clip width=1,
		scale=2,
		background color=white,
		consider self intersections,
		only when rendering/.style={
			draw=white,
			double=black,
			double distance=1.5pt,
			line cap=none
		}
		]
\strand
(-1.45,.6) to ++(.3,0);
\strand
(.55,.6) to ++(.3,0);
\strand
(0,0) to +(.4,0)
arc (-90:0:.3)
to +(0,1.2)
arc (0:90:.3)
to +(-.4,0)
arc (-270:-180:.15)
.. controls +(0,-.15) and +(0,.15) .. ++(-.3,-.3)
.. controls +(0,-.15) and +(0,.15) .. ++(.3,-.3)
arc (-180:-90:.15)
to +(.25,0)
arc (90:0:.15)
to +(0,-.3)
arc (0:-90:.15)
to ++(-.25,0)
.. controls +(-.15,0) and +(.15,0) .. ++(-.55,-.3)
.. controls +(-.15,0) and +(.15,0) .. ++(-.3,.3)
to ++(-.25,0)
arc (-90:-180:.15)
to ++(0,.3)
arc (180:90:.15)
to ++(.25,0)
arc (-90:0:.15)
.. controls +(0,.15) and +(0,-.15) .. ++(.3,.3)
.. controls +(0,.15) and +(0,-.15) .. ++(-.3,.3)
arc (0:90:.15)
to ++(-.4,0)
arc (90:180:.3)
to ++(0,-1.2)
arc (-180:-90:.3)
to ++(.4,0)
.. controls +(.15,0) and +(-.15,0) .. ++(.3,.3)
.. controls +(.15,0) and +(-.15,0) .. ++(.3,-.3);
\flipcrossings{2,3}
\end{knot}
\draw[dotted] (0,-.1) to ++(0,.8) to ++(-1.2,0) to ++(0,-.8) to ++(1.2,0);
\draw[dotted] (-.2,2.1) to ++(-.8,0) to ++(0,1.2) to ++(.8,0) to ++(0,-1.2);
\draw[very thick, -latex] (-2.3,1.2) to ++(.1,0);
\draw[very thick, -latex] (1.1,1.2) to ++(-.1,0);
\node[left] at (-2.9,1.2) {$a$};
\node[right] at (1.7,1.2) {$b$};
\node at (-.6,.9) {$+2$};
\node at (0,2.7) {$+2$};
\end{tikzpicture}
$$
The figure-eight knot is another double-twist knot, instead written as $4_{1}=J(2,-2)$.
More generally, for $n\in\mathbb{Z}$, we denote the $n$-twist knot as $K(n)=J(2,2n)$.
\begin{remark}
Here, we consider only when both $k,\ell$ are even, although there is some interest in $\ell=2n+1$.
Using symmetry properties of the double twist knots, one can rewrite $J(k,\ell)=J(\ell,k)$, $J(k,\ell)^{\ast}=J(-k,-\ell)$, and $J(2,2n+1)=J(-2,2n)$.
When $k,\ell$ are both odd, $J(k,\ell)$ is a two component link, so these are not considered here.
\end{remark}

We denote the $(p,q)$-cabling over a knot $C$ by $[(p,q),C]$, whose construction is given in~\cite{nz_2017}.
If $C=T(r,s)$ is an $(r,s)$-torus knot, we may simply denote $[(p,q),T(r,s)]=[(p,q),(r,s)]$ and refer to this as an iterated torus knot.
A general iterated torus knot is similarly denoted by $[(p_{1},q_{1}),\ldots,(p_{n},q_{n})]$ which are iterated cables over a $(p_{n},q_{n})$-torus knot.
Note that each $(p_{i},q_{i})$-cable only requires $p_{i},q_{i}$ relatively prime and $q_{i}\geq2$, but the last $T(p_{n},q_{n})$ additionally requires $\left|p_{n}\right|>q_{n}\geq2$ to be a torus knot.
We also note that the $(p,q)$-cable over the unknot is the $(p,q)$-torus knot $T(p,q)$ when $|p|>1$ and the unknot for $|p|=1$.

For two knots $K_{1},K_{2}$, we denote their connected sum $K_{1}\# K_{2}$.
Beginning with the unknot, the {\it graph knots} are then the collection of all knots closed under $(p,q)$-cabling and connected sums:
$$
\mathcal{G}_{0}:=\big\langle U\big|[(p,q),-],\#\big\rangle.
$$
Equivalently, a knot $K$ is a graph knot if and only if $\mathcal{M}_{K}$ is a graph manifold, {\it i.e.} the hyperbolic volume $\mathrm{Vol}(\mathcal{M}_{K})$ is zero.
Recall that the hyperbolic volume of a knot $K$ is the sum of the volumes of the hyperbolic pieces $\mathcal{M}_{i}$ in the JSJ-decomposition, $\mathrm{Vol}(\mathcal{M}_{K})=\sum_{i}\mathrm{Vol}(\mathcal{M}_{i})$.

The $(p,q)$-cabling and connected sum operations are examples of satellite operations.
In general, a satellite knot is a knot whose exterior contains an incompressible, non-boundary parallel torus.
These knots can be constructed from a companion knot $C\subset \mathbb{S}^{3}$, a pattern knot $P$, and a homeomorphism $f:\mathbb{S}^{3}\to\mathbb{S}^{3}$ so that $f(P)$ is contained in an unknotted solid torus $V$ satisfying
\begin{enumerate}
\item
$f(P)$ is not contained in a 3-ball in $V$,
\item
$f(P)$ not isotopic to the core curve of $V$, and
\item
$f(P)$ is isotopic to $P$ when viewed in $\mathbb{S}^{3}$.
\end{enumerate}
The gluing $\phi$ is an ``untwisted'' embedding $\phi:V\to N(C)$, that is, a homeomorphism from $V$ to a regular neighborhood of $C$ that sends the meridian of $V$ to the meridian of $N(C)$, and likewise for the preferred longitudes.
We denote the satellite knot by $\mathrm{Sat}(P,C,f)=\phi(f(P))$.

The following guarantees the existence of certain factors of the $A$-polynomial of the connected sum of two knots:
\begin{remark}\cite{nz_2017}
\label{patternfactor}
For a satellite knot $K=\mathrm{Sat}(P,C,f)$ with companion knot $C$ and pattern knot $P$, $A_{P}|A_{K}$.
\end{remark}
For a satellite knot $K=\mathrm{Sat}(P,C,f)$, we denote the factor of the $A$-polynomial not contributed by the pattern knot by $\widetilde{F}_{K}=(A_{P})^{-1}A_{K}$.
Since $K_{1}\#K_{2}$ is a satellite knot where either $K_{1}$ or $K_{2}$ can be considered as the pattern knot and the other as the companion knot, we note the following corollary.
\begin{corollary}
\label{connectedfactor}
For the connected sum $K_{1}\#K_{2}$ of two knots $K_{i}$, we have $A_{K_{1}}|A_{K_{1}\#K_{2}}$ and $A_{K_{2}}|A_{K_{1}\#K_{2}}$; in particular, $\mathrm{Red}\left[(L-1)\widetilde{A}_{K_{1}}\widetilde{A}_{K_{2}}\right]\Big|A_{K_{1}\#K_{2}}$.
\end{corollary}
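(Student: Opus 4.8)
The plan is to deduce both divisibilities directly from Remark~\ref{patternfactor} by exhibiting $K_{1}\#K_{2}$ as a satellite knot in two ways, and then to extract the ``in particular'' statement from the fact that $A_{K_{1}\#K_{2}}$ has no repeated factors.

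First I would recall the standard description of the connected sum as a satellite operation: take a solid torus $V$ and let $P$ be a copy of $K_{1}$ sitting in $V$ so that exactly one arc of $K_{1}$ runs once around the longitude of $V$ (equivalently, $P$ is ``$K_{1}$ with a connect-sum window'' of winding number $1$). Then $P$ is isotopic to $K_{1}$ when viewed in $\mathbb{S}^{3}$, is not contained in a ball in $V$, and is not isotopic to the core of $V$, and for the untwisted embedding $\phi\colon V\to N(K_{2})$ one has $\phi(f(P))=K_{1}\#K_{2}$. Thus $K_{1}\#K_{2}=\mathrm{Sat}(K_{1},K_{2},f)$, and Remark~\ref{patternfactor} gives $A_{K_{1}}=A_{P}\mid A_{K_{1}\#K_{2}}$. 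Since $K_{1}\#K_{2}=K_{2}\#K_{1}$ as knots, the symmetric construction with $K_{2}$ as the pattern gives $A_{K_{2}}\mid A_{K_{1}\#K_{2}}$ as well. (If one of the summands is the unknot the corresponding divisibility is vacuous, so nothing is lost in assuming both summands are nontrivial when invoking the satellite description.)

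For the last assertion I would use property (2) of the $A$-polynomial: $A_{K_{1}\#K_{2}}$ has no repeated factors. Whenever two polynomials both divide a squarefree polynomial, their least common multiple divides it as well, so $\mathrm{lcm}(A_{K_{1}},A_{K_{2}})\mid A_{K_{1}\#K_{2}}$. It then remains to identify $\mathrm{lcm}(A_{K_{1}},A_{K_{2}})$ with $\mathrm{Red}\big[(L-1)\widetilde{A}_{K_{1}}\widetilde{A}_{K_{2}}\big]$ up to normalization: writing $A_{K_{i}}=(L-1)\widetilde{A}_{K_{i}}$ with $(L-1)\nmid\widetilde{A}_{K_{i}}$ (again because $A_{K_{i}}$ is squarefree, together with Remark~\ref{trivfactor}), each $\widetilde{A}_{K_{i}}$ is itself squarefree, so in the product $(L-1)\widetilde{A}_{K_{1}}\widetilde{A}_{K_{2}}$ the only irreducible factors occurring with multiplicity $>1$ are those common to $\widetilde{A}_{K_{1}}$ and $\widetilde{A}_{K_{2}}$, each occurring with multiplicity exactly $2$; removing repetitions therefore yields precisely $\mathrm{lcm}(A_{K_{1}},A_{K_{2}})$. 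Combining the two displays gives $\mathrm{Red}\big[(L-1)\widetilde{A}_{K_{1}}\widetilde{A}_{K_{2}}\big]\mid A_{K_{1}\#K_{2}}$.

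I do not anticipate a serious obstacle; the only points requiring care are bookkeeping ones — passing between divisibility in $\mathbb{Z}[L,M]$ and in $\mathbb{Q}[L,M]$ (legitimate because $A$-polynomials are primitive, so Gauss's lemma applies, or one simply works throughout up to the relation $\doteq$), and checking that $\mathrm{Red}$ applied to the product really computes the least common multiple rather than merely the product, which is exactly where squarefreeness of $A_{K_{1}\#K_{2}}$ (and hence of each $\widetilde{A}_{K_{i}}$) is used.
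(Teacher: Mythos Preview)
Your proposal is correct and follows exactly the paper's approach: the paper states the corollary as an immediate consequence of Remark~\ref{patternfactor}, noting only that ``$K_{1}\#K_{2}$ is a satellite knot where either $K_{1}$ or $K_{2}$ can be considered as the pattern knot and the other as the companion knot.'' You supply more detail than the paper does---in particular your derivation of the ``in particular'' clause via squarefreeness and the lcm is not spelled out in the paper---but the underlying argument is the same.
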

However, there may be other factors in $\widetilde{F}_{K_{1}\#K_{2}}$, and so the difficulty in computing $A_{K_{1}\#K_{2}}$ is computing these factors or showing none exist.

We now focus on the {\it integer pseudo-graph knots} $\mathcal{G}_{\mathbb{Z}}$, the family of knots $K$ where every irreducible factor of $A_{K}$ is the form $(LM^{r}-\delta)$ for $r\in\mathbb{Z}$ and $\delta\in\{\pm1\}$:
\begin{align*}
\mathcal{G}_{\mathbb{Z}}:=&\left\{K\subset \mathbb{S}^{3}\middle|A_{K}\doteq\prod_{j\in J}(LM^{r_{j}}-\delta_{j}),r_{j}\in\mathbb{Z},\delta_{j}\in\{\pm1\}\right\}.
\end{align*}

Remark~\ref{trivfactor} tells us that the factor $(L-1)$ with $r=0$ and $\delta=1$ will always occur in the $A$-polynomial.
The torus knots and unknot are contained in $\mathcal{G}_{\mathbb{Z}}$ by~\cite{cooperetal_1994}, the formulas of their $A$-polynomials given below; furthermore, the formula for $A_{[(p_{1},q_{1}),\ldots,(p_{n},q_{n})]}$ from~\cite{nz_2017} given below implies that every iterated torus knot is also in $\mathcal{G}_{\mathbb{Z}}$.

\begin{remark}\cite{nz_2017}
\label{iteratedtorus}
\begin{enumerate}[(1)]
\item
The $A$-polynomial of a $(p,q)$-torus knot $T(p,q)$ is
$$
A_{T(p,q)}=(L-1)F_{(p,q)}(L,M).
$$
\item
The $A$-polynomial of an iterated torus knot $[(p_{1},q_{1}),\ldots,(p_{n},q_{n})]$ is
$$
A_{[(p_{1},q_{1}),\ldots,(p_{n},q_{n})]}=(L-1)\underset{i=1}{\overset{k}{\prod}}F_{(p_{i},q_{i})}\left(L,M^{\prod_{j=1}^{i-1}q_{j}^{2}}\right)\cdot\underset{i=k+1}{\overset{n}{\prod}}G_{(p_{i},q_{i})}\left(L,M^{\prod_{j=1}^{i-1}q_{j}^{2}}\right),
$$
\end{enumerate}
where $q_{k}$ is the first even integer in the iterated cabling and the functions $F_{(p,q)}$, $G_{(p,q)}$ are as described below:
\begin{align*}
F_{(p,q)}(L,M)&=
\begin{cases}
LM^{2p}+1&:\hspace{4pt}q=2,p>0\\
L+M^{-2p}&:\hspace{4pt}q=2,p<0\\
L^{2}M^{2pq}-1&:\hspace{4pt}q>2,p>0\\
L^{2}-M^{-2pq}&:\hspace{4pt}q>2,p<0,
\end{cases}
&G_{(p,q)}(L,M)&=
\begin{cases}
LM^{pq}-1&:\hspace{4pt}p>0\\
L-M^{-pq}&:\hspace{4pt}p<0.
\end{cases}
\end{align*}
\end{remark}
We may also consider the ``non-normalized'' forms of $F_{(p,q)},G_{(p,q)}$ as
\begin{align*}
F_{(p,q)}(L,M)&\doteq
\begin{cases}
LM^{2p}+1&:\hspace{4pt}q=2\\
L^{2}M^{2pq}-1&:\hspace{4pt}q>2,
\end{cases}
&G_{(p,q)}(L,M)&\doteq
LM^{pq}-1.
\end{align*}
It is also worth noting these polynomials are a product of cyclotomic polynomials $\Phi_{n}(t)$ evaluated on the monomial in $L$ and $M$:
\begin{align*}
F_{(p,q)}(L,M)&\doteq\begin{cases}
\Phi_{2}(LM^{2p})&:\hspace{4pt}q=2\\
\Phi_{2}(LM^{pq})\Phi_{1}(LM^{pq})&:\hspace{4pt}q>2,
\end{cases}
&G_{(p,q)}(L,M)&\doteq\Phi_{1}(LM^{pq}).
\end{align*}

\section{Proofs of Theorems~\ref{gzconnect} and~\ref{gzcable}}
\label{proof1}
To prove Theorem~\ref{gzconnect}, we recall some ideas about connected sums and utilize the notation of an amalgamated representation $\rho_{1}\ast\rho_{2}$ from Cooper, Long~\cite{cl_1997}.
For an $SL_{2}\mathbb{C}$-representation over an amalgamated product $\rho:G_{1}\ast_{H}G_{2}\to SL_{2}\mathbb{C}$, if $\rho$ restricts to representations on the subgroups $G_{i}$ as $\rho|_{G_{i}}=\rho_{i}$ such that these representations agree along the group $H$, $\rho_{1}|_{H}=\rho_{2}|_{H}$, then we may simply write $\rho=\rho_{1}\ast\rho_{2}$ when the amalgamation is understood.

For a connected sum of knots $K_{1}\#K_{2}$, it is known that the knot exterior $\mathcal{M}_{K_{1}\#K_{2}}=\mathcal{M}_{K_{1}}\cup_{A}\mathcal{M}_{K_{2}}$ over a properly embedded gluing annulus $A$ whose boundary $\partial A$ is two meridian curves in $\partial\mathcal{M}_{K_{1}}$ and $\partial\mathcal{M}_{K_{2}}$.
In either knot exterior $\mathcal{M}_{K_{i}}$, the preferred framing can be taken to be $(\lambda_{i},\mu)$ where $\mu$ is one of the components of $\partial A$ and $\lambda_{i}$ is the boundary of a properly embedded Seifert surface $F_{i}$ in $\mathcal{M}_{K_{i}}$.

We may also isotopy the surfaces so that $F_{1}\cap A=F_{2}\cap A$ are curves from one boundary component of $A$ to the other.
A minimal Seifert surface $F$ in $\mathcal{M}_{K_{1}\#K_{2}}$ can then be taken by using the band connect sum of $F_{1}$ and $F_{2}$ along their common intersection in $A$.
The homotopy class $[\partial F]$ in $\pi_{1}(\mathcal{M}_{K_{1}})\ast_{\pi_{1}(A)}\pi_{1}(\mathcal{M}_{K_{2}})$ can be represented by the preferred longitude $\lambda=\lambda_{1}\lambda_{2}$, and therefore the preferred framing of $\mathcal{M}_{K_{1}\#K_{2}}$ is $(\lambda_{1}\lambda_{2},\mu)$ since $\mathcal{M}_{K_{i}}$ can be assumed to have a common meridian $\mu$ component of $\partial A$.

If $\rho_{i}:\pi_{1}(\mathcal{M}_{K_{i}})\to SL_{2}\mathbb{C}$ are representations which agree on the common meridian $\mu$ as above, then we may conjugate so that $\rho_{i}(\mu)$ is upper-triangular, which implies that each $\rho_{i}(\lambda_{i})$ is also upper-triangular.
Since $\rho_{1}(\mu)=\rho_{2}(\mu)$, note that the eigenvalue maps $\xi:R(\mathcal{M}_{K_{i}})\to\mathbb{C}^{2}$ will have $\xi(\rho_{i})=(L_{i},M)$.
Since these representations agree along the gluing annulus, they will extend to a representation $\rho=\rho_{1}\ast\rho_{2}\in R(\mathcal{M}_{K_{1}\#K_{2}})$ such that $\rho(\lambda_{i})=\rho_{i}(\lambda_{i})$, and therefore $\rho(\lambda)=\rho_{1}(\lambda_{1})\rho_{2}(\lambda_{2})$.
Hence, the eigenvalue map $\xi:R(\mathcal{M}_{K_{1}\#K_{2}})\to\mathbb{C}^{2}$ will satisfy $\xi(\rho)=(L_{1}L_{2},M)$, as described in Cooper, Long:
\begin{lemma}\cite{cl_1997}
\label{cooperlong}
For two knots $K_{1},K_{2}$ with representations $\rho_{i}:\pi_{1}(\mathcal{M}_{K_{i}})\to SL_{2}\mathbb{C}$ the eigenvalue map $\xi(\rho_{i})=(L_{i},M)$ extends to the representation $\rho=\rho_{1}\ast\rho_{2}$ over their connected sum if and only if $\rho_{1},\rho_{2}$ agree on the meridian.
In this case, $\xi(\rho)=(L_{1}L_{2},M)$.
\end{lemma}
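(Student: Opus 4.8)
The plan is to combine van Kampen's theorem and the universal property of amalgamated free products with a one-line matrix computation; all the geometric input (the decomposition $\mathcal{M}_{K_1\#K_2}=\mathcal{M}_{K_1}\cup_A\mathcal{M}_{K_2}$ over the gluing annulus $A$, and the fact that $\lambda=\lambda_1\lambda_2$ is the preferred longitude of the connected sum) is exactly the content of the paragraphs preceding the statement, so I would simply invoke it. First, since the core of $A$ is the common meridian $\mu$, van Kampen gives $\pi_1(\mathcal{M}_{K_1\#K_2})\cong\pi_1(\mathcal{M}_{K_1})\ast_{\langle\mu\rangle}\pi_1(\mathcal{M}_{K_2})$, where the amalgamating subgroup $\pi_1(A)\cong\mathbb{Z}$ is generated by $\mu$. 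The ``only if'' direction is then immediate: if a representation $\rho$ of the connected sum restricts to $\rho_i$ on $\pi_1(\mathcal{M}_{K_i})$, then since $\mu$ lies in both factors, $\rho_1(\mu)=\rho(\mu)=\rho_2(\mu)$, so $\rho_1$ and $\rho_2$ agree on the meridian.

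For the converse, suppose $\rho_1(\mu)=\rho_2(\mu)$, which says precisely that $\rho_1|_{\pi_1(A)}=\rho_2|_{\pi_1(A)}$; the universal property of the amalgamated product then produces a unique $\rho=\rho_1\ast\rho_2$ on $\pi_1(\mathcal{M}_{K_1\#K_2})$ restricting to each $\rho_i$. To read off $\xi(\rho)$, I would conjugate globally so that $\rho(\mu)$ — hence both $\rho_i(\mu)$ — is the upper-triangular matrix with diagonal $(M,M^{-1})$; taking $\rho_i\in R_U(\mathcal{M}_{K_i})$, each $\rho_i(\lambda_i)$ is upper triangular with diagonal $(L_i,L_i^{-1})$, and the peripheral subgroup of $\mathcal{M}_{K_1\#K_2}$, generated by $\mu$ and $\lambda=\lambda_1\lambda_2$, is sent into the upper-triangular subgroup, so in fact $\rho\in R_U(\mathcal{M}_{K_1\#K_2})$. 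Finally $\rho(\lambda)=\rho(\lambda_1)\rho(\lambda_2)=\rho_1(\lambda_1)\rho_2(\lambda_2)$ is a product of upper-triangular matrices whose upper-left entry is $L_1L_2$; hence $\xi(\rho)=(L_1L_2,M)$.

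The only delicate point — and the main, though minor, obstacle — is ensuring that $\rho_i(\lambda_i)$ can be conjugated into upper-triangular form simultaneously with $\rho_i(\mu)$. When $\rho_i(\mu)$ is diagonalizable and non-central this is automatic from the commuting relation between $\rho_i(\lambda_i)$ and $\rho_i(\mu)$ in the peripheral subgroup; when $\rho_i(\mu)$ is parabolic the same relation confines $\rho_i(\lambda_i)$ to the unique $\rho_i(\mu)$-invariant line, again forcing upper-triangularity with $L_i=\pm1$; and the central case $\rho_i(\mu)=\pm I$ only simplifies matters. In every case the upper-left-entry computation is unchanged, so the identity $\xi(\rho)=(L_1L_2,M)$ — understood as an equality of points in $\mathbb{C}^2$ in the sense of Section~\ref{apoly} — holds on all of $R_U$, which is exactly what is needed for the product/Minkowski-sum behaviour of $A_{K_1\#K_2}$ exploited in Theorem~\ref{gzconnect}.
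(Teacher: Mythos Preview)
Your proposal is correct and follows essentially the same approach as the paper: the paragraph immediately preceding the lemma already sketches the argument via van Kampen, the amalgamated product, and the upper-triangular computation $\rho(\lambda)=\rho_1(\lambda_1)\rho_2(\lambda_2)$, and you have simply filled in those steps with a bit more care (including the explicit ``only if'' direction and the parabolic/central case analysis). The one remark I would soften is that the central case $\rho_i(\mu)=\pm I$ does not so much ``simplify matters'' as become vacuous once you have already placed both $\rho_i$ in $R_U$, since then both $\rho_i(\lambda_i)$ are upper-triangular by hypothesis and their product is automatically upper-triangular with diagonal $(L_1L_2,(L_1L_2)^{-1})$.
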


\subsection*{Proof of Theorem~\ref{gzconnect}}
By Lemma \ref{cooperlong}, there is a representation $\rho=\rho_{1}\ast\rho_{2}\in R(\mathcal{M}_{K_{1}\#K_{2}})$ if and only if there are representations $\rho_{i}$ which agree along the meridian, and we find that the eigenvalue map $\xi(\rho_{i})=(L_{i},M)$ extends to $\xi(\rho)=(L_{1}L_{2},M)$ and hence we have $L=L_{1}L_{2}$.
This implies that we have the following three equations in variables $L,L_{1},L_{2},M$: $A_{K_{1}}(L_{1},M)=0$, $A_{K_{2}}(L_{2},M)=0$, and $L-L_{1}L_{2}=0$.
Assuming that $K_{1},K_{2}\in\mathcal{G}_{\mathbb{Z}}$, let $A_{K_{1}}\doteq\prod_{i\in I}(LM^{r_{i}}-\delta_{i})$ and $A_{K_{2}}\doteq\prod_{j\in J}(LM^{s_{j}}-\delta_{j})$ for $r_{i},s_{j}\in\mathbb{Z}$, $\delta_{i},\delta_{j}\in\{\pm1\}$, and finite indexing sets $I,J$.
Hence, for every pair of irreducible factors $f_{i}=(LM^{r_{i}}-\delta_{i})|A_{K_{1}}$ and $g_{j}=(LM^{s_{j}}-\delta_{j})|A_{K_{2}}$, there is a corresponding polynomial factor of $A_{K_{1}\#K_{2}}$.
If the factor $f_{i}(L_{1},M)=L_{1}-1$, then we find $L_{1}=1$ which contributes $g_{j}(L,M)|A_{K_{1}\# K_{2}}$, which is already known by Corollary \ref{connectedfactor}; similarly, the factor $g_{j}(L_{2},M)=L_{2}-1$ contributes the known factor $f_{i}(L,M)|A_{K_{1}\#K_{2}}$.

Otherwise, let $f_{i}=(L_{1}M^{r_{i}}-\delta_{i})|A_{K_{1}}$ and $g_{j}=(L_{2}M^{s_{j}}-\delta_{j})|A_{K_{2}}$ be generic factors respectively, with $r_{i},s_{j}\in\mathbb{Z}$ and $\delta_{i},\delta_{j}\in\{\pm1\}$.
Solving $f_{i}(L_{1},M)=0$ and $g_{j}(L_{2},M)=0$ for $L_{i}$ gives $L_{1}=\delta_{i}M^{-r_{i}}$ and $L_{2}=\delta_{j}M^{-s_{j}}$; hence $L=L_{1}L_{2}=(\delta_{i}M^{-r_{i}})(\delta_{j}M^{-s_{j}})$ and so $LM^{r_{i}+s_{j}}-\delta_{i}\delta_{j}=0$.
Therefore, up to normalization, $(LM^{r_{i}+s_{j}}-\delta_{i}\delta_{j})|A_{K_{1}\#K_{2}}$ and so $\mathrm{Red}\left[(L-1)\prod_{i,j}(LM^{r_{i}+s_{j}}-\delta_{i}\delta_{j})\right]$ divides $A_{K_{1}\#K_{2}}$.

To make sure that isolated points $(L_{i},M)=\xi(\rho_{i})$ for $\rho_{i}\in R(\mathcal{M}_{K_{i}})$ do not contribute new factors of $A_{K_{1}\#K_{2}}$, we let $(L_{1},M)=\xi(\rho_{1})$ be an isolated point, hence $M\in\mathbb{C}^{\ast}$ must be fixed.
If $\rho_{1}$ extends to some representation $\rho=\rho_{1}\ast\rho_{2}\in R(\mathcal{M}_{K_{1}\#K_{2}})$, then there must exist a representation $\rho_{2}\in R(\mathcal{M}_{K_{2}})$ so that $\xi(\rho_{2})=(L_{2},M)$ for some $L_{2}\in\mathbb{C}^{\ast}$, however we either have $(L_{2},M)$ also an isolated point or $(L_{2},M)\in\mathbb{V}(g_{j})$ for some factor $g_{j}|A_{K_{2}}$ which uniquely determines $L_{2}=\delta_{j}M^{-s_{j}}$.
Hence if the representation $\rho_{1}$ extends to $\rho\in R(\mathcal{M}_{K_{1}\#K_{2}})$, the point $(L_{1}L_{2},M)=\xi(\rho)$ is still an isolated point.
A similar argument shows isolated points $(L_{2},M)=\xi(\rho_{2})$ will contribute only isolated points $(L_{1}L_{2},M)$.
Thus, there are no other factors, which proves the formula for computation of $A_{K_{1}\# K_{2}}$ for $K_{i}\in\mathcal{G}_{\mathbb{Z}}$.
\null\hfill$\square$\\\\

We can use the above proof to construct an unreduced, non-normalized formula for the $A$-polynomial of connected sums of integer pseudo-graph knots noting that the  $L-1$ is one of the factors in this product.
Notice that Theorem~\ref{gzconnect} can be generalized inductively to an arbitrary number of connected sums very easily:
\begin{corollary}
\label{torconnected}
Let $K_{1},\ldots,K_{n}\in\mathcal{G}_{\mathbb{Z}}$ where $A_{K_{i}}\doteq\underset{j_{i}\in J_{i}}{\prod}(LM^{r_{j_{i}}}-\delta_{j_{i}})$ with $r_{j_{i}}\in\mathbb{Z}$ and $\delta_{j_{i}}\in\{\pm1\}$ for $i=1,\ldots,n$.
Denote by $\mathbf{j}=(j_{1},\ldots,j_{n})$ where the $i$-th component $j_{i}$ corresponds to some factor $(LM^{r_{j_{i}}}-\delta_{j_{i}})$ of $A_{K_{i}}$, and let $\mathbf{J}=J_{1}\times\cdots\times J_{n}$ be the indexing set of all such $\mathbf{j}$, then
$$
A_{\#_{i=1}^{n}K_{i}}\doteq\mathrm{Red}\left[\underset{\mathbf{j}\in\mathbf{J}}{\prod}\left(LM^{\underset{i=1}{\overset{n}{\sum}}r_{j_{i}}}-\underset{i=1}{\overset{n}{\prod}}\delta_{j_{i}}\right)\right].
$$
\end{corollary}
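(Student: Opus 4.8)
The plan is to induct on $n$, using the associativity of the connected sum together with Theorem~\ref{gzconnect} (itself built on Lemma~\ref{cooperlong}). The base case $n=1$ is trivial, and $n=2$ is exactly Theorem~\ref{gzconnect}. For the inductive step I would write $\#_{i=1}^{n}K_{i}=\bigl(\#_{i=1}^{n-1}K_{i}\bigr)\#K_{n}$ and put $K'=\#_{i=1}^{n-1}K_{i}$. The inductive hypothesis gives $K'\in\mathcal{G}_{\mathbb{Z}}$, so $A_{K'}$ is a product of distinct binomials $LM^{a}-\varepsilon$; moreover $K'$ is nontrivial, since the factor $L-1$ is forced by Remark~\ref{trivfactor} and, consistently with the formula, arises from the multi-index $\mathbf{j}'$ that selects the trivial factor $LM^{0}-1$ out of each $A_{K_{i}}$. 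Applying Theorem~\ref{gzconnect} to $K'$ and $K_{n}$ then yields $\#_{i=1}^{n}K_{i}\in\mathcal{G}_{\mathbb{Z}}$ and
\[
A_{\#_{i=1}^{n}K_{i}}\doteq\mathrm{Red}\left[\prod_{\substack{LM^{a}-\varepsilon\,\mid\,A_{K'}\\ j_{n}\in J_{n}}}\left(LM^{a+r_{j_{n}}}-\varepsilon\,\delta_{j_{n}}\right)\right].
\]

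What remains is to identify the right-hand side with $\mathrm{Red}\bigl[\prod_{\mathbf{j}\in\mathbf{J}}(LM^{\sum_{i}r_{j_{i}}}-\prod_{i}\delta_{j_{i}})\bigr]$. The key observation is that each binomial $LM^{r}-\delta$ is irreducible in $\mathbb{Z}[L,M^{\pm1}]$ (it is linear in $L$ with unit content), and that $LM^{r}-\delta\doteq LM^{r'}-\delta'$ if and only if $(r,\delta)=(r',\delta')$; hence for any product of such binomials the list of irreducible factors up to $\doteq$ is faithfully recorded by the associated multiset of pairs in $\mathbb{Z}\times\{\pm1\}$, and $\mathrm{Red}$ amounts to passing to the underlying set. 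Using the inductive hypothesis to describe which binomials divide $A_{K'}$, the desired identity collapses to the set equality
\[
\left\{\left(\textstyle\sum_{i=1}^{n}r_{j_{i}},\ \prod_{i=1}^{n}\delta_{j_{i}}\right):\mathbf{j}\in\mathbf{J}\right\}=\left\{\,(a+r_{j_{n}},\ \varepsilon\,\delta_{j_{n}}):(a,\varepsilon)\in S',\ j_{n}\in J_{n}\,\right\},
\]
where $S'=\bigl\{\bigl(\sum_{i=1}^{n-1}r_{j_{i}},\prod_{i=1}^{n-1}\delta_{j_{i}}\bigr):\mathbf{j}'\in\mathbf{J}'\bigr\}$ and $\mathbf{J}'=J_{1}\times\cdots\times J_{n-1}$. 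This equality is immediate: both sides are the translate of the multiset generating $S'$ by every pair $(r_{j_{n}},\delta_{j_{n}})$ in the abelian group $\mathbb{Z}\times\{\pm1\}$, and the outcome of such translates depends only on the \emph{set} $S'$, not on multiplicities.

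Finally, assuming each $K_{i}$ nontrivial costs nothing: a trivial summand has $J_{i}=\{\ast\}$ with $(r_{j_{i}},\delta_{j_{i}})=(0,1)$, so dropping it changes neither the knot nor the displayed product, and the induction may be run over the nontrivial summands. All the genuine geometric input is already packaged in Theorem~\ref{gzconnect}, so the only (mild) obstacle here is the combinatorial check of the previous paragraph, namely that iterating the binary reduction operation and reducing after each step produces the same polynomial as forming the full $n$-fold product and reducing once. As an alternative to the induction, one could reprove Theorem~\ref{gzconnect} directly for the $n$-fold gluing $\mathcal{M}_{\#_{i}K_{i}}=\bigcup_{i}\mathcal{M}_{K_{i}}$ along annuli sharing a common meridian $\mu$: an amalgam $\rho=\rho_{1}\ast\cdots\ast\rho_{n}$ exists precisely when the $\rho_{i}$ agree on $\mu$, in which case $\xi(\rho)=\bigl(\prod_{i}L_{i},M\bigr)$; substituting $L_{i}=\delta_{j_{i}}M^{-r_{j_{i}}}$ from a chosen factor of each $A_{K_{i}}$ produces the binomial $LM^{\sum_{i}r_{j_{i}}}-\prod_{i}\delta_{j_{i}}$, and the isolated-point analysis carries over unchanged.
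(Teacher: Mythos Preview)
Your proposal is correct and follows exactly the approach the paper indicates: the paper states only that Theorem~\ref{gzconnect} ``can be generalized inductively to an arbitrary number of connected sums very easily'' and gives no further argument, so your induction on $n$ via $K'=\#_{i=1}^{n-1}K_{i}$ is precisely what is intended. The combinatorial bookkeeping you supply---that iterated reduction agrees with a single reduction because the relevant data is the \emph{set} of pairs $(r,\delta)\in\mathbb{Z}\times\{\pm1\}$, and translating a set in this abelian group is insensitive to multiplicities---fills in the detail the paper leaves implicit.
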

This implies that we may take connected sums of as many knots in $\mathcal{G}_{\mathbb{Z}}$ as desired and the resulting $A$-polynomial can be found by considering combinations of factors from each component.

Also, notice that since $\widetilde{A}_{T(p,q)}=F_{(p,q)}(L,M)$ is explicitly given by Remark~\ref{iteratedtorus}, we immediately see that Corollary \ref{torconnected} is a consequence of Theorem~\ref{gzconnect}.
\begin{remark}
It is worth noting that the $A$-polynomial does not completely distinguish knots in $\mathcal{G}_{0}$.
Different torus knots can have equivalent $A$-polynomials, for example $A_{T(10,3)}=A_{T(6,5)}$.
Furthermore, by the work of Ni and Zhang, distinct cables over torus knots can have equivalent $A$-polynomials, such as $A_{[(13,15),(11,7)]}=A_{[(65,3),(275,7)]}$.
From Theorem~\ref{gzconnect} and the immediate Corollary~\ref{torconnected}, we find that there are infinitely many distinct connected sums of torus knots with equivalent $A$-polynomials.
For example, $A_{T(15,7)\# T(17,11)}=A_{T(21,5)\# T(17,11)}$.
\end{remark}

This process can be used for arbitrary connected sums of torus knots $\#_{i=1}^{n}T(p_{i},q_{i})$ noticing that any factor $(L^{2}M^{2r}-1)=(LM^{r}+1)(LM^{r}-1)$ and each component handled separately; but when we ``combine'' any factor $(LM^{r_{1}}-\delta_{1})$ with $(L^{2}M^{2r_{2}}-1)$, we get a new factor of $(L^{2}M^{2(r_{1}+r_{2})}-1)$ independent of $\delta_{1}$.

Similar combinatorial formulas will emerge as consequences of this connected sum formula, but we now move on to the proof of closure of $\mathcal{G}_{\mathbb{Z}}$ under the $(p,q)$-cabling operation.

\begin{lemma}\cite[Theorem 2.8]{nz_2017}
\label{generalcabling}
The $(p,q)$-cabling over any companion knot $C$ for $q\geq2$, $[(p,q),C]$ has $A$-polynomial
$$
A_{[(p,q),C]}=\begin{cases}
\mathrm{Red}\left[(L-1)F_{(p,q)}(L,M)\mathrm{Res}_{\overline{L}}\left[\widetilde{A}_{C}\left(\overline{L},M^{q}\right),L-\overline{L}^{q}\right]\right]&:\deg_{L}(\widetilde{A}_{C})\neq0\\
(L-1)F_{(p,q)}(L,M)\widetilde{A}_{C}(M^{q})&:\deg_{L}(\widetilde{A}_{C})=0.
\end{cases}
$$
\end{lemma}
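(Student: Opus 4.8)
The plan is to compute $A_{[(p,q),C]}$ directly from the character-variety construction of Section~\ref{apoly}, exploiting the satellite (JSJ) decomposition of the cable exterior. Writing $K=[(p,q),C]$, the exterior $\mathcal{M}_K$ splits along the essential cabling torus $T=\partial N(C)$ into the cable space $CS=V\smallsetminus\overset{\circ}{N}(T(p,q))$ and the companion exterior $\mathcal{M}_C$, glued by the untwisted embedding that identifies $(\mu_V,\lambda_V)$ with $(\mu_C,\lambda_C)$. By van Kampen, $\pi_1(\mathcal{M}_K)=\pi_1(CS)\ast_{\pi_1(T)}\pi_1(\mathcal{M}_C)$, so every representation $\rho\in R(\mathcal{M}_K)$ is an amalgam $\rho_{CS}\ast\rho_C$ of representations agreeing on $\pi_1(T)$, in the same spirit as Lemma~\ref{cooperlong}. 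I would therefore track the eigenvalue map $\xi$ componentwise across this gluing.

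First I would record the peripheral data of the cable space. Since $CS$ is Seifert fibered over an annulus with one exceptional fiber of order $q$, its regular fiber $h$ is central in $\pi_1(CS)$, and on its two boundary tori one reads off the homological relations between $h$, the outer curves $(\mu_V,\lambda_V)$, and the inner curves $(\mu_K,\lambda_K)$. The two facts that drive the formula are that the meridian disk of $V$ meets the pattern $q$ times, giving $[\mu_V]=q[\mu_K]$ and hence $\overline M=M^{q}$ on eigenvalues, and that the winding number $q$ forces $[\lambda_K]=q[\lambda_V]$ modulo the central fiber, giving $L=\overline L^{\,q}$ once $\rho(h)$ is pinned down. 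The self-linking $pq$ of the cable enters only through the inner-boundary relation identifying $h$ with the cable-framed longitude, which is exactly what produces the torus-knot factor $F_{(p,q)}$ of Remark~\ref{iteratedtorus}; indeed, since the pattern of a $(p,q)$-cable is $T(p,q)$ with $A_{T(p,q)}=(L-1)F_{(p,q)}$, Remark~\ref{patternfactor} independently forces $(L-1)F_{(p,q)}\mid A_K$.

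Next I would split on the restriction $\rho_C=\rho|_{\pi_1(\mathcal{M}_C)}$. When $\rho_C$ is reducible (abelian), the amalgam factors through the cabling of the unknot, which is $T(p,q)$, and these representations sweep out precisely the component carrying $(L-1)F_{(p,q)}(L,M)$. When $\rho_C$ is irreducible, its boundary eigenvalues $(\overline L,\overline M)$ satisfy $\widetilde A_C(\overline L,\overline M)=0$, and the cable-space relations of the previous step impose $\overline M=M^{q}$ together with $L=\overline L^{\,q}$; pinning $\rho(h)=\pm I$ absorbs the framing sign, so that a factor $\overline L\,\overline M^{\,r}-\delta$ of $\widetilde A_C$ contributes $LM^{q^{2}r}-\delta^{q}$. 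Eliminating the auxiliary variable $\overline L$ between $\widetilde A_C(\overline L,M^{q})$ and $L-\overline L^{\,q}$ is exactly $\mathrm{Res}_{\overline L}\!\left[\widetilde A_C(\overline L,M^{q}),\,L-\overline L^{\,q}\right]$, which one checks returns $\prod(LM^{q^{2}r}-\delta^{q})$ factorwise. When $\deg_{\overline L}\widetilde A_C=0$ the variable $\overline L$ is absent, the resultant degenerates, and the irreducible-companion contribution is the plain substitution $\widetilde A_C(M^{q})$, yielding the second case of the statement.

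Finally I would assemble the pieces: the Zariski closure of $\xi$ over the reducible-companion locus is $\mathbb{V}\big((L-1)F_{(p,q)}\big)$ and over the irreducible-companion locus is the zero set of the resultant, so $A_K$ is the reduced, primitively normalized product of these, which is the asserted formula. The step I expect to be the main obstacle is \emph{exhaustiveness}: showing that these two families account for every one-dimensional component of $\overline{\mathrm{im}\,\xi}$ and that the amalgamation introduces no spurious factors. This requires arguing component by component that any representation restricting nontrivially to $\partial\mathcal{M}_K$ either kills the companion (reducible case) or restricts to an irreducible companion character compatible with the cable-space fiber, and that distinct gluings along $T$ create no extra positive-dimensional families; here the centrality of the Seifert fiber $h$ is what rigidifies the cable-space side and keeps the elimination exact. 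A secondary technical point is the careful bookkeeping of the framing and of $\rho(h)=\pm I$, which is responsible for the exponent $q^{2}$ and the sign $\delta^{q}$ emerging after the resultant and for the fact that $p$ survives only inside $F_{(p,q)}$.
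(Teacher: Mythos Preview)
The paper does not prove this lemma; it is quoted as \cite[Theorem~2.8]{nz_2017} and used as a black box in the proof of Theorem~\ref{gzcable}. There is therefore no in-paper argument to compare your proposal against.

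Your sketch is nonetheless the standard approach and matches the architecture of Ni--Zhang's original argument: split $\mathcal{M}_K$ along the cabling torus, write representations as amalgams $\rho_{CS}\ast\rho_C$, exploit the Seifert-fibered structure of the cable space (centrality of the regular fiber $h$ and the boundary relations forcing $\overline{M}=M^q$, $L=\overline{L}^{\,q}$), and separate the pattern contribution $(L-1)F_{(p,q)}$ from the companion contribution packaged by the resultant. The paper itself runs a closely parallel case analysis in Lemma~\ref{cablenogaps} when showing that $A_{[(p,q),C]}$ has no gaps: there the split on $\rho(h)=\pm I$, the handling of the $|p|=1$ versus $|p|>1$ cases, and the explicit extension of companion representations across the cable space are all worked out, and that argument would serve as a template for the exhaustiveness step you correctly flag as the main obstacle. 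One minor sharpening: your dichotomy ``$\rho_C$ reducible versus irreducible'' is slightly coarser than what is actually needed; the cleaner split (visible both in Ni--Zhang and in the paper's Lemmas~\ref{reps0}--\ref{reps012}) is on the reducibility of the cable-space restriction first, since it is the condition $\rho_{CS}(h)=\pm I$ on an irreducible $\rho_{CS}$ that directly yields the $F_{(p,q)}$ factor, independent of what $\rho_C$ does.
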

Since knots in $\mathcal{G}_{\mathbb{Z}}$ will not have $\deg_{L}\left(\widetilde{A}_{C}\right)=0$ unless $C=U$, we prove Theorem~\ref{gzcable}:

\subsection*{Proof of Theorem \ref{gzcable}}
Let $C\in\mathcal{G}_{\mathbb{Z}}$ such that $A_{C}\doteq\prod_{j\in J}(LM^{r_{j}}-\delta_{j})$.
In the case that $\deg_{L}(\widetilde{A}_{C})=0$, this implies that $C=U$ and so we consider $[(p,q),U]$ either a torus knot or the unknot, which will be in $\mathcal{G}_{\mathbb{Z}}$ by Remark~\ref{iteratedtorus} (1).

When $\deg_{L}(\widetilde{A}_{C})\neq0$, Lemma~\ref{generalcabling} implies that $F_{(p,q)}(L,M)|A_{[(p,q),C]}$ as before, and each factor $(LM^{r_{j}}-\delta_{j})$ of $\widetilde{A}_{C}$ contributes a factor of $A_{[(p,q),C]}$ given by the resultant $\mathrm{Res}_{\overline{L}}\left[\widetilde{A}_{C}\left(\overline{L},M^{q}\right),L-\overline{L}^{q}\right]$.
By general properties of the resultant and the definition of $\widetilde{A}_{C}$, we know
\begin{align*}
\mathrm{Res}_{\overline{L}}\left[\widetilde{A}_{C}(\overline{L},M^{q}),L-\overline{L}^{q}\right]&=\mathrm{Res}_{\overline{L}}\left[\underset{j\in J}{\prod}(\overline{L}(M^{q})^{r_{j}}-\delta_{j}),L-\overline{L}^{q}\right]\\
&=\mathrm{Red}\left[\underset{j\in J}{\prod}\mathrm{Res}_{\overline{L}}\left[\overline{L}M^{r_{j}q}-\delta_{j},L-\overline{L}^{q}\right]\right].
\end{align*}
We can take this resultant directly from the Sylvester matrix:
\begin{align*}
\mathrm{Res}_{\overline{L}}\left[\overline{L}M^{r_{j}q}-\delta_{j},L-\overline{L}^{q}\right]&\doteq
\det\begin{pmatrix}
-\delta_{j}&M^{r_{j}q}&&0\\
&\ddots&\ddots&\\
0&&-\delta_{j}&M^{r_{j}q}\\
L&0&0&-1
\end{pmatrix}\\
&\doteq(-1)^{q}L\left(M^{r_{j}q}\right)^{q}-(-\delta_{j})^{q}\\
&\doteq LM^{r_{j}q^{2}}-\delta_{j}^{q}.
\end{align*}
Again, we find that the corresponding factors of $A_{[(p,q),C]}$ will kill the integer slope $r_{j}q^{2}\in\mathbb{Z}$, and therefore, every such factor $(LM^{r_{j}q^{2}}-\delta_{j}^{q})|A_{[(p,q),C]}$.

Since all of the factors of $A_{[(p,q),C]}$ up to polynomial reduction are of this form by~\cite{nz_2017}, it follows that $[(p,q),C]\in\mathcal{G}_{\mathbb{Z}}$.
\null\hfill$\square$\\\\

As with Theorem~\ref{gzconnect}, a simple argument gives a similar formula for the $A$-polynomial of an iterated cable over an integer pseudo-graph knot:
\begin{corollary}
For $C\in\mathcal{G}_{\mathbb{Z}}$ where $A_{C}\doteq\prod_{j\in J}(LM^{r_{j}}-\delta_{j})$, where $r_{j}\in\mathbb{Z}$ and $\delta_{j}\in\{-1,1\}$ and for each $i=1,\ldots,n$, we have $p_{i},q_{i}$ relatively prime with each $q_{i}\geq2$ and $\left|p_{n}\right|>q_{n}\geq2$,
$$
A_{[(p_{1},q_{1}),\ldots,(p_{n},q_{n}),C]}\doteq
\mathrm{Red}\left[A_{[(p_{1},q_{1}),\ldots,(p_{n},q_{n})]}\underset{j\in J}{\prod}\left(LM^{r_{j}\underset{i=1}{\overset{}{\prod}}q_{i}^{2}}-\delta_{j}^{\underset{i=1}{\overset{n}{\prod}}q_{i}}\right)\right].
$$
\end{corollary}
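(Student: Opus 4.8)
The plan is to argue by induction on the number $n$ of cabling operations, peeling off the \emph{outermost} cable and invoking Theorem~\ref{gzcable} at each stage; this is legitimate because Theorem~\ref{gzcable} already records that a $(p,q)$-cable of a knot in $\mathcal{G}_{\mathbb{Z}}$ again lies in $\mathcal{G}_{\mathbb{Z}}$, so every intermediate iterated cable stays in $\mathcal{G}_{\mathbb{Z}}$ and the theorem may be reapplied. For $n=1$ the assertion is precisely Theorem~\ref{gzcable} together with Remark~\ref{iteratedtorus}(1), reading the empty product $\prod_{i=1}^{0}q_i^{2}$ as $1$; the case $C=U$ needs no separate treatment, since then $\widetilde{A}_U$ is a unit, the companion product collapses to the single factor $L-1\mid A_{[(p_1,q_1),\ldots,(p_n,q_n)]}$, and (using $|p_n|>q_n\geq 2$) the knot $[(p_1,q_1),\ldots,(p_n,q_n),U]$ is the iterated torus knot $[(p_1,q_1),\ldots,(p_n,q_n)]$ itself.

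For the inductive step, put $D:=[(p_2,q_2),\ldots,(p_n,q_n),C]$, so that $[(p_1,q_1),\ldots,(p_n,q_n),C]=[(p_1,q_1),D]$; iterating Theorem~\ref{gzcable} from the torus knot $T(p_n,q_n)\in\mathcal{G}_{\mathbb{Z}}$ shows $D\in\mathcal{G}_{\mathbb{Z}}$ is nontrivial, and the induction hypothesis gives $A_D\doteq\mathrm{Red}\bigl[A_{[(p_2,q_2),\ldots,(p_n,q_n)]}\prod_{j\in J}(LM^{r_j\prod_{i=2}^{n}q_i^{2}}-\delta_j^{\prod_{i=2}^{n}q_i})\bigr]$. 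Apply Theorem~\ref{gzcable} with companion $D$: its $A$-polynomial is $\mathrm{Red}$ of $(L-1)F_{(p_1,q_1)}(L,M)$ times the product, over the irreducible factors $LM^{s}-\varepsilon$ of $\widetilde{A}_D$, of $LM^{sq_1^{2}}-\varepsilon^{q_1}$; the passage $LM^{s}-\varepsilon\mapsto LM^{sq_1^{2}}-\varepsilon^{q_1}$ is the same Sylvester-determinant evaluation combined with resultant multiplicativity $\mathrm{Res}(fg,h)\doteq\mathrm{Res}(f,h)\,\mathrm{Res}(g,h)$ used in the proof of Theorem~\ref{gzcable}.

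Now partition the irreducible factors of $\widetilde{A}_D$ into those dividing $\widetilde{A}_{[(p_2,q_2),\ldots,(p_n,q_n)]}$ and those of the form $LM^{r_j\prod_{i=2}^{n}q_i^{2}}-\delta_j^{\prod_{i=2}^{n}q_i}$. The cabling substitution carries the latter to $LM^{r_jq_1^{2}\prod_{i=2}^{n}q_i^{2}}-\bigl(\delta_j^{\prod_{i=2}^{n}q_i}\bigr)^{q_1}=LM^{r_j\prod_{i=1}^{n}q_i^{2}}-\delta_j^{\prod_{i=1}^{n}q_i}$, which are exactly the factors in the claimed formula. It carries the former, after multiplication by $(L-1)F_{(p_1,q_1)}(L,M)$, to $A_{[(p_1,q_1),(p_2,q_2),\ldots,(p_n,q_n)]}=A_{[(p_1,q_1),\ldots,(p_n,q_n)]}$ — this is once again Theorem~\ref{gzcable}, now applied with the torus-knot-based companion $[(p_2,q_2),\ldots,(p_n,q_n)]\in\mathcal{G}_{\mathbb{Z}}$. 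Collecting the two groups and applying $\mathrm{Red}$ yields the stated identity.

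The only point I expect to need genuine care, rather than routine bookkeeping, is the interaction of $\mathrm{Red}$ with the substitution $(LM^{s}-\varepsilon)\mapsto(LM^{sq_1^{2}}-\varepsilon^{q_1})$: one must verify that the substitution respects the normalization equivalence $\doteq$ and that reducing before versus after cabling gives the same polynomial. This holds because the substitution is injective on factors modulo $\doteq$ except for the collapse of $LM^{s}-1$ and $LM^{s}+1$ both onto $LM^{sq_1^{2}}-1$ when $q_1$ is even, which merely produces a repeated factor absorbed by $\mathrm{Red}$ — precisely the behaviour already built into Theorem~\ref{gzcable}.
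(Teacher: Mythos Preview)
Your proof is correct and follows the same approach the paper has in mind: the paper presents this corollary immediately after Theorem~\ref{gzcable} with only the remark that ``a simple argument gives a similar formula,'' meaning precisely the induction on the number of cabling operations that you spell out. Your careful handling of the interaction between $\mathrm{Red}$ and the substitution $(LM^{s}-\varepsilon)\mapsto(LM^{sq_1^{2}}-\varepsilon^{q_1})$, including the collapse when $q_1$ is even, is a level of detail the paper omits but which is exactly what is needed to make the argument rigorous.
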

The factor of $A_{[(p_{1},q_{1}),\ldots,(p_{n},q_{n})]}$ is consistent with Remark~\ref{patternfactor} since we may think of the iterated cabling $[(p_{1},q_{1}),\ldots,(p_{n},q_{n}),C]$ as having a pattern knot $P=[(p_{1},q_{1}),\ldots,(p_{n},q_{n})]$ when $T(p_{n},q_{n})$ is a torus knot, and therefore it follows directly from $A_{P}|A_{[(p_{1},q_{1}),\ldots,(p_{n},q_{n}),C]}$.

By Theorems \ref{gzconnect} and \ref{gzcable}, we see that $\mathcal{G}_{\mathbb{Z}}$ is closed under connected sums and $(p,q)$-cabling, and thus $\mathcal{G}_{0}\subset\mathcal{G}_{\mathbb{Z}}$; furthermore, the above corollaries provide a strategy for computing the $A$-polynomials of combinations of $(p,q)$-cables and connected sums of knots in $\mathcal{G}_{\mathbb{Z}}$.

As mentioned before, since every knot $K\in\mathcal{G}_{\mathbb{Z}}$ has an $A$-polynomial where each irreducible factor can be written as the sum of two monomials in $L$ and $M$, there are no hyperbolic knots in $\mathcal{G}_{\mathbb{Z}}$; more generally, recall there are no hyperbolic knots in $\mathfrak{M}_{0}$ by Corollary~\ref{m0nohyp}.
It suffices to understand whether any satellite knots which are not graph knots are in $\mathcal{G}_{\mathbb{Z}}$.
As we will show in Section~\ref{proof2}, Theorem~\ref{wnzthm} implies that $n$-twisted Whitehead doubles of graph knots are not in $\mathcal{G}_{\mathbb{Z}}$, as well as several other families of satellite knots.

So far, the graph knots $\mathcal{G}_{0}$ are the only known examples of knots in $\mathcal{G}_{\mathbb{Z}}$ and more widely in $\mathfrak{M}_{0}$, and because all graph knots have zero hyperbolic volume, $\mathrm{Vol}(\mathcal{M}_{K})=0$, the known examples of knots in $\mathcal{G}_{\mathbb{Z}}$ support Conjecture~\ref{apolyvol}.
Since every graph knot has logarithmic Mahler measure zero by Theorems~\ref{gzconnect} and~\ref{gzcable}, the assertion of Conjecture~\ref{apolyvol} is that the $A$-polynomial of a knot $K$ has $\mathrm{m}(A_{K})=0$ implies $K$ is a graph knot.

In the next section, we will examine winding number zero satellites of graph knots, but other examples to consider are nonzero winding number satellite knots $\mathrm{Sat}(P,C,f)$ where the ``satellite space'' $V-\overset{\circ}{N}(f(P))$ has positive hyperbolic volume, for example $K=\mathrm{Sat}(U,T(3,2),f)$, where the embedding of the unknot $U$ in $V$ is given by the closure of the following solid cylinder:

$$
\begin{tikzpicture}[scale=2]
\draw[thick] (0,0) -- ++(1.1,0) arc (-90:90:.2 and .6) -- ++(-1.1,0) arc (90:270:.2 and .6);
\draw[thick, dashed] (0,0) arc (-90:90:.2 and .6);
		\begin{knot}[
		line width=1pt,
		line join=round,
		clip width=1,
		scale=1,
		background color=white,
		consider self intersections,
		only when rendering/.style={
			draw=white,
			double=black,
			double distance=1pt,
			line cap=none
		}
		]
\strand 
(0,.2) -- ++(.15,0)
.. controls +(.2,0) and +(-.2,0) .. ++(.4,.4)
.. controls +(.2,0) and +(-.2,0) .. ++(.4,.4)
-- ++(.15,0);
\strand 
(0,.6) -- ++(.15,0)
.. controls +(.2,0) and +(-.2,0) .. ++(.4,-.4)
-- ++(.4,0)
-- ++(.15,0);
\strand 
(0,1) -- ++(.15,0)
-- ++(.4,0)
.. controls +(.2,0) and +(-.2,0) .. ++(.4,-.4)
-- ++(.15,0);
\flipcrossings{1}
\end{knot}
\draw[thick] (1.1,0) arc (270:90:.2 and .6);
\draw[thick,fill=black] (0,.2) circle (1pt);
\draw[thick,fill=black] (0,.6) circle (1pt);
\draw[thick,fill=black] (0,1) circle (1pt);
\draw[thick,fill=black] (1.1,.2) circle (1pt);
\draw[thick,fill=black] (1.1,.6) circle (1pt);
\draw[thick,fill=black] (1.1,1) circle (1pt);
\node[above] at (.55,1.2) {$V$};
\node[above] at (.5,.6) {$U$};
\end{tikzpicture}
$$
Since $\mathrm{Vol}(\mathcal{M}_{K})=\mathrm{Vol}(V-\overset{\circ}{N}(f(U)))+\mathrm{Vol}(\mathcal{M}_{T(3,2)})>0$ (from SnapPy), we know $K$ is not a graph knot.
By Remark~\ref{nohypcomp}, since the winding number of $f(U)$ in $V$ is 3, each factor of $A_{T(3,2)}(\overline{L},M)=(\overline{L}-1)(\overline{L}M^{6}+1)$ extends to factors which are the sums of monomials in $L,M$ while $A_{P}=A_{U}=(L-1)$ contributes no nontrivial factor to $\widetilde{A}_{K}$.
The factor $(\overline{L}-1)$ contributes $\mathrm{Red}\left[\mathrm{Res}_{\overline{L}}\left[\overline{L}-1,L-\overline{L}^{3}\right]\right]=L-1$, and the factor $LM^{6}+1$ contributes $\mathrm{Red}\left[\mathrm{Res}_{\overline{L}}\left[\overline{L}(M^{3})^{6}-1,L-\overline{L}^{3}\right]\right]=LM^{54}+1$.
This implies $(LM^{54}+1)|\widetilde{F}_{\mathrm{Sat}(U,T(3,2),f)}$ for the factor $\widetilde{F}_{\mathrm{Sat}(P,C,f)}=(A_{P})^{-1}A_{\mathrm{Sat}(P,C,f)}$ mentioned in Section~\ref{knotfam}; however, this factor may contain more nontrivial factors which may not be the sum of two monomials in $L,M$.
To see whether there are other factors, we need to know whether the irreducible representations $\rho_{2}\in R^{\ast}(\mathcal{M}_{2})$ can extend to representations on the companion knot side.

\section{Winding Number Zero Satellite Operations}
\label{zerodouble}
We call a satellite knot $K=\mathrm{Sat}(P,C,f)$ a {\it winding number zero satellite} if the embedded knot $f(P)\subset V$ has winding number zero in $V$.
An example of a winding number zero satellite is the $n$-twisted Whitehead double of $C$, $D_{n}(C)$.
To visualize the satellite operations, we illustrate the pattern knot $f(P)=\ell_{x}$ and the unknot $\ell_{y}$ so that the solid torus $V=\mathcal{M}_{\ell_{y}}$.
To construct the Whitehead double, we consider the untwisted Whitehead link $W=\ell_{x}\cup\ell_{y}$ where both $\ell_{x},\ell_{y}$ are unknots or the $n$-twisted Whitehead link:
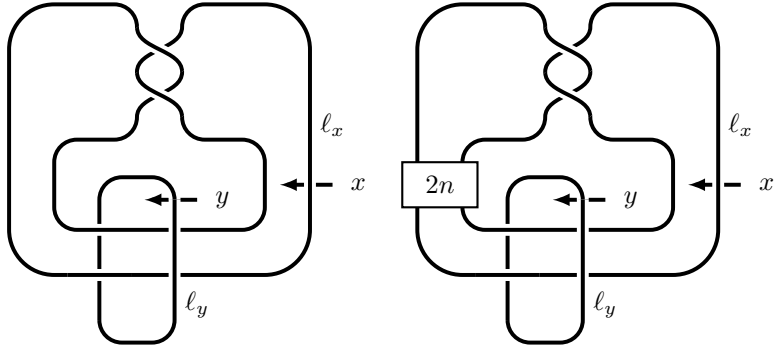
\begin{figure}[th]
$$
\begin{tikzpicture}
		\begin{knot}[
		line width=1.5pt,
		line join=round,
		clip width=1,
		scale=2,
		background color=white,
		consider self intersections,
		only when rendering/.style={
			draw=white,
			double=black,
			double distance=1.5pt,
			line cap=none
		}
		]
\strand
(.85,.6) to ++(-.3,0);
\strand
(-.35,.5) to ++(.3,0);
\strand
(-.2,.5) arc (0:90:.15)
to ++(-.2,0)
arc (90:180:.15)
to ++(0,-.8);
\strand
(0,0) to +(.4,0)
arc (-90:0:.3)
to +(0,1.2)
arc (0:90:.3)
to +(-.4,0)
arc (-270:-180:.15)
.. controls +(0,-.15) and +(0,.15) .. ++(-.3,-.3)
.. controls +(0,-.15) and +(0,.15) .. ++(.3,-.3)
arc (-180:-90:.15)
to +(.25,0)
arc (90:0:.15)
to +(0,-.3)
arc (0:-90:.15)
to ++(-1.1,0)
arc (-90:-180:.15)
to ++(0,.3)
arc (180:90:.15)
to ++(.25,0)
arc (-90:0:.15)
.. controls +(0,.15) and +(0,-.15) .. ++(.3,.3)
.. controls +(0,.15) and +(0,-.15) .. ++(-.3,.3)
arc (0:90:.15)
to ++(-.4,0)
arc (90:180:.3)
to ++(0,-1.2)
arc (-180:-90:.3)
to ++(1,0);
\strand
(-.2,.5) to ++(0,-.8)
arc (0:-90:.15)
to ++(-.2,0)
arc (-90:-180:.15);
\flipcrossings{1,2}
\end{knot}
\draw[very thick, -latex] (1.1,1.2) to ++(-.1,0);
\draw[very thick, -latex] (-.7,1) to ++(-.1,0);
\node[right] at (1.8,1.2) {$x$};
\node[right] at (0,1) {$y$};
\node at (1.7,2) {$\ell_{x}$};
\node at (-.1,-.4) {$\ell_{y}$};
\end{tikzpicture}
\hspace{10pt}
\begin{tikzpicture}
		\begin{knot}[
		line width=1.5pt,
		line join=round,
		clip width=1,
		scale=2,
		background color=white,
		consider self intersections,
		only when rendering/.style={
			draw=white,
			double=black,
			double distance=1.5pt,
			line cap=none
		}
		]
\strand
(.85,.6) to ++(-.3,0);
\strand
(-.35,.5) to ++(.3,0);
\strand
(-.2,.5) arc (0:90:.15)
to ++(-.2,0)
arc (90:180:.15)
to ++(0,-.8);
\strand
(0,0) to +(.4,0)
arc (-90:0:.3)
to +(0,1.2)
arc (0:90:.3)
to +(-.4,0)
arc (-270:-180:.15)
.. controls +(0,-.15) and +(0,.15) .. ++(-.3,-.3)
.. controls +(0,-.15) and +(0,.15) .. ++(.3,-.3)
arc (-180:-90:.15)
to +(.25,0)
arc (90:0:.15)
to +(0,-.3)
arc (0:-90:.15)
to ++(-1.1,0)
arc (-90:-180:.15)
to ++(0,.3)
arc (180:90:.15)
to ++(.25,0)
arc (-90:0:.15)
.. controls +(0,.15) and +(0,-.15) .. ++(.3,.3)
.. controls +(0,.15) and +(0,-.15) .. ++(-.3,.3)
arc (0:90:.15)
to ++(-.4,0)
arc (90:180:.3)
to ++(0,-1.2)
arc (-180:-90:.3)
to ++(1,0);
\strand
(-.2,.5) to ++(0,-.8)
arc (0:-90:.15)
to ++(-.2,0)
arc (-90:-180:.15);
\flipcrossings{1,2}
\end{knot}
\draw[very thick, -latex] (1.1,1.2) to ++(-.1,0);
\draw[very thick, -latex] (-.7,1) to ++(-.1,0);
\draw[thick,fill=white] (-2.8,.9) -- ++(1,0) -- ++(0,.6) -- ++(-1,0) -- ++(0,-.6) -- cycle;
\node[right] at (1.8,1.2) {$x$};
\node[right] at (0,1) {$y$};
\node at (1.7,2) {$\ell_{x}$};
\node at (-.1,-.4) {$\ell_{y}$};
\node at (-2.3,1.2) {$2n$};
\end{tikzpicture}
$$
\vspace*{0pt}
\caption{Untwisted Whitehead Link $W$ on the left and $n$-Twisted Whitehead Link on the right.\label{fig1}}
\end{figure}

The link exterior $\mathcal{M}_{W}=\mathbb{S}^{3}-\overset{\circ}{N}(W)$ will use the embedded $f(P)=\ell_{x}$ as the pattern knot for the untwisted double embedded into the solid torus $V=\mathcal{M}_{\ell_{y}}$.
The link group $\pi_{1}(\mathcal{M}_{W})$ has the following presentation,
$$
\pi_{1}(\mathcal{M}_{W})\cong\langle x,y|\Omega=\Omega^{\ast}\rangle
$$
where $x$ is the meridian generator coming from $\ell_{x}$, $y$ is the meridian generator coming from $\ell_{y}$, $x^{-1}=X$, $y^{-1}=Y$, the word $\Omega=yxYxyXyx$, and $\Omega^{\ast}$ denotes the reverse word of $\Omega$.
We also understand that a preferred framing of the two boundary tori $\partial N(\ell_{x})$ and $\partial N(\ell_{y})$ is given by meridians and longitudes
\begin{align*}
\mu_{x}&=x&\lambda_{x}&=XY\Omega YX=YxyXyxYX&\text{for }&\partial N(\ell_{x})\\
\mu_{y}&=y&\lambda_{y}&=YX\Omega XY=YXyxYxyX&\text{for }&\partial N(\ell_{y}).
\end{align*}
For a knot $C$, the {\it$n$-twisted Whitehead double} $D_{n}(C)$ is given by $\mathrm{Sat}(K(n),C,f)$ where the embedded twist knot $f(K(n))=\ell_{x}$ in $V=\mathcal{M}_{\ell_{y}}$ is given as shown in Figure~\ref{fig1} with $n$-vertical full-twists.

Thus, the fundamental group of the knot exterior $\mathcal{M}_{D_{n}(C)}$ is the amalgamated free-product given by the Van Kampen theorem,
$$
\pi_{1}(\mathcal{M}_{D_{n}(C)})\cong\pi_{1}(\mathcal{M}_{1})\ast_{\pi_{1}(\partial N(\ell_{y}))}\pi_{1}(\mathcal{M}_{2}),
$$
where $\mathcal{M}_{1}=\mathcal{M}_{C}$ is called the {\it companion space} and $\mathcal{M}_{2}=V-\overset{\circ}{N}(\ell_{x})$ is called the {\it satellite space}.

Generalizing slightly, the Borromean rings (shown below) give us a way of understanding a more general family of winding number zero doubles, the {\it $(m,n)$-double twisted doubles}, denoted $D_{m,n}(C)$:
\begin{figure}[th]
$$
\begin{tikzpicture}
		\begin{knot}[
		line width=1.5pt,
		line join=round,
		clip width=1,
		scale=2,
		background color=white,
		consider self intersections,
		only when rendering/.style={
			draw=white,
			double=black,
			double distance=1.5pt,
			line cap=none
		}
		]
\strand
(0,1.5) to ++(-.6,0);
\strand
(.3,1.3) to ++(-.3,0);		
\strand
(.85,.6) to ++(-.3,0);
\strand
(-.35,.5) to ++(.3,0);
\strand
(-.2,.5) arc (0:90:.15)
to ++(-.2,0)
arc (90:180:.15)
to ++(0,-.8);
\strand
(0,0) to +(.4,0)
arc (-90:0:.3)
to +(0,1.2)
arc (0:90:.3)
to +(-.4,0)
arc (-270:-180:.15)
to +(0,-.6)
arc (-180:-90:.15)
to +(.25,0)
arc (90:0:.15)
to +(0,-.3)
arc (0:-90:.15)
to ++(-1.1,0)
arc (-90:-180:.15)
to ++(0,.3)
arc (180:90:.15)
to ++(.25,0)
arc (-90:0:.15)
to +(0,.6)
arc (0:90:.15)
to ++(-.4,0)
arc (90:180:.3)
to ++(0,-1.2)
arc (-180:-90:.3)
to ++(1,0);
\strand
(-.2,.5) to ++(0,-.8)
arc (0:-90:.15)
to ++(-.2,0)
arc (-90:-180:.15);
\strand
(0,1.5)
arc (90:0:.15)
to ++(0,-.1)
arc (0:-90:.15)
to ++(-.6,0)
arc (-90:-180:.15)
to ++(0,.1)
arc (180:90:.15);
\flipcrossings{1,2,5,6}
\end{knot}
\draw[very thick, -latex] (1.1,1.2) to ++(-.1,0);
\draw[very thick, -latex] (-.7,1) to ++(-.1,0);
\draw[very thick, -latex] (0,2.6) to ++(-.1,0);
\node[right] at (1.8,1.2) {$x$};
\node[right] at (0,1) {$y$};
\node[right] at (.6,2.6) {$z$};
\node at (1.7,2) {$\ell_{x}$};
\node at (-.1,-.4) {$\ell_{y}$};
\node at (-1.7,2.6) {$\ell_{z}$};
\end{tikzpicture}
\hspace{10pt}
\begin{tikzpicture}
		\begin{knot}[
		line width=1.5pt,
		line join=round,
		clip width=1,
		scale=2,
		background color=white,
		consider self intersections,
		only when rendering/.style={
			draw=white,
			double=black,
			double distance=1.5pt,
			line cap=none
		}
		]
\strand
(.85,.6) to ++(-.3,0);
\strand
(-.35,.5) to ++(.3,0);
\strand
(-.2,.5) arc (0:90:.15)
to ++(-.2,0)
arc (90:180:.15)
to ++(0,-.8);
\strand
(0,0) to +(.4,0)
arc (-90:0:.3)
to +(0,1.2)
arc (0:90:.3)
to +(-.4,0)
arc (-270:-180:.15)
.. controls +(0,-.15) and +(0,.15) .. ++(-.3,-.3)
.. controls +(0,-.15) and +(0,.15) .. ++(.3,-.3)
arc (-180:-90:.15)
to +(.25,0)
arc (90:0:.15)
to +(0,-.3)
arc (0:-90:.15)
to ++(-1.1,0)
arc (-90:-180:.15)
to ++(0,.3)
arc (180:90:.15)
to ++(.25,0)
arc (-90:0:.15)
.. controls +(0,.15) and +(0,-.15) .. ++(.3,.3)
.. controls +(0,.15) and +(0,-.15) .. ++(-.3,.3)
arc (0:90:.15)
to ++(-.4,0)
arc (90:180:.3)
to ++(0,-1.2)
arc (-180:-90:.3)
to ++(1,0);
\strand
(-.2,.5) to ++(0,-.8)
arc (0:-90:.15)
to ++(-.2,0)
arc (-90:-180:.15);
\flipcrossings{1,2}
\end{knot}
\draw[very thick, -latex] (1.1,1.2) to ++(-.1,0);
\draw[very thick, -latex] (-.7,1) to ++(-.1,0);
\draw[thick,fill=white] (-2.8,.9) -- ++(1,0) -- ++(0,.6) -- ++(-1,0) -- ++(0,-.6) -- cycle;
\draw[very thick, fill=white] (-.2,3.3) to ++(-.8,0) to ++(0,-1.2) to ++(.8,0) to ++(0,1.2) -- cycle;
\node[right] at (1.8,1.2) {$x$};
\node[right] at (0,1) {$y$};
\node at (1.7,2) {$\ell_{x}$};
\node at (-.1,-.4) {$\ell_{y}$};
\node at (-2.3,1.2) {$2n$};
\node at (-.6,2.7) {$2m$};
\end{tikzpicture}
$$
\vspace*{0pt}
\caption{Borromean rings $B$ on the left and $(m,n)$-double twisted double satellite space on the right.\label{fig2}}
\end{figure}
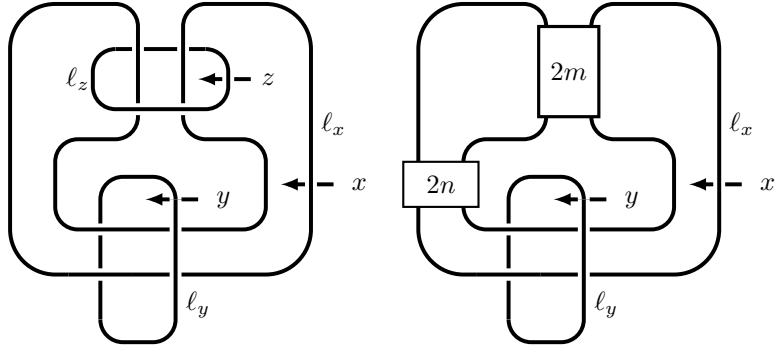

We find that the fundamental group of the Borromean rings is
$$
\pi_{1}(\mathcal{M}_{B})\cong\langle x,y,z|[x,\lambda_{x}]=[y,\lambda_{y}]=[z,\lambda_{z}]=e\rangle,
$$
where
\begin{align*}
\lambda_{x}&=ZyzY&\lambda_{y}&=zxZX&\lambda_{z}&=YXyx
\end{align*}
By performing $(1/m)$-Dehn surgery on $\mathcal{M}_{B}$ along the boundary component $\partial N(\ell_{z})$, this quotient affects the fundamental group by setting $z={\lambda_{z}}^{-m}=(YXyx)^{-m}$ and therefore the fundamental group of the satellite space on the right is given by:
$$
\pi_{1}(\mathcal{M}_{2})\cong\left\langle x,y\middle|[x,(YXyx)^{m}y(YXyx)^{-m}Y]=[y,(YXyx)^{-m}x(YXyx)^{m}X]=e\right\rangle.
$$

In full generality of winding number zero satellite knots, we will let $f(P)=\ell_{x}$ be an embedded pattern knot in $V=\mathcal{M}_{\ell_{y}}$ so that $f(P)$ has winding number zero in $V$, thus $f(P)$ bounds a Seifert surface in $V$.

To understand how killing slopes $r\in\mathcal{DS}_{C}$ extend to representations in the satellite space, we consider the quotient map obtained from $(1/r)$-Dehn filling along $\partial V$, that is $V(1/r)-\overset{\circ}{N}(f(P))=\mathcal{M}_{f(P)_{r}}$.
The quotient map $Q_{r}:\mathcal{M}_{2}\to \mathcal{M}_{f(P)_{r}}$ given by $Q_{r}(\lambda_{y}^{r}\mu_{y})=e$ induces an onto homomorphism $Q_{r\ast}:\pi_{1}(\mathcal{M}_{2})\to\pi_{1}(\mathcal{M}_{f(P)_{r}})$ also satisfying $Q_{r\ast}(\lambda_{y}^{r}\mu_{y})=e$.
Here, we denote the companion knot exterior $\mathcal{M}_{1}=\mathcal{M}_{C}$ and satellite space $\mathcal{M}_{2}=V-\overset{\circ}{N}(f(P))$.
The image of the Seifert surface $S$ under $Q_{r}$ will remain a Seifert surface of $\mathcal{M}_{f(P)_{r}}$, hence the preferred framing $(\lambda_{x},\mu_{x})$ of $\partial N(\ell_{x})$ can be thought of as the preferred framing of $\mathcal{M}_{f(P)_{r}}$.
We will refer to the boundary components of $\mathcal{M}_{2}$ as the $x$- or $y$-boundary component, denoted $\partial_{x}\mathcal{M}_{2}=\partial N(\ell_{x})$ and $\partial_{y}\mathcal{M}_{2}=\partial N(\ell_{y})$ respectively.
By the Van Kampen theorem, the fundamental group of $\mathcal{M}_{\mathrm{Sat}(P,C,f)}$ is the amalgamated free-product given by
$$
\pi_{1}(\mathcal{M}_{\mathrm{Sat}(P,C,f)})\cong\pi_{1}(\mathcal{M}_{1})\ast_{\pi_{1}(\partial_{y}\mathcal{M}_{2})}\pi_{1}(\mathcal{M}_{2}),
$$
where the gluing $\phi:\partial\mathcal{M}_{1}\to\partial_{y}\mathcal{M}_{2}$ is given by $\phi(\mu_{C})=\lambda_{y}$ and $\phi(\lambda_{C})=\mu_{y}{\lambda_{y}}^{-n}$.
Hence, we may consider $\lambda_{y}=\mu_{C}$ and $\mu_{y}=\lambda_{C}$ in $\pi_{1}(\mathcal{M}_{\mathrm{Sat}(P,C,f)})$.

Then, $\rho_{1}\in R(\mathcal{M}_{1}),\rho_{2}\in R(\mathcal{M}_{2})$ agree on the boundary by satisfying the gluing relations, $\rho_{1}(\mu_{C})=\rho_{2}(\lambda_{y})$ and $\rho_{1}(\lambda_{C})=\rho_{2}(\mu_{y})$.
Also notice that for every $\rho\in R(\mathcal{M}_{\mathrm{Sat}(P,C,f)})$, the representation will restrict to representations $\rho_{1}=\rho|_{\pi_{1}(\mathcal{M}_{1})}$ and $\rho_{2}=\rho|_{\pi_{1}(\mathcal{M}_{2})}$ which satisfy the gluing relations, hence $\rho=\rho_{1}\ast\rho_{2}$.

We note here that every representation $\sigma\in R(\mathcal{M}_{f(P)_{r}})$ will lift to a representation $\rho_{2}\in R(\mathcal{M}_{2})$ by composition with $Q_{r\ast}$:
$$
\begin{tikzcd}
\pi_{1}(\mathcal{M}_{2})\arrow[d, "Q_{r\ast}"]\arrow[dr, "\rho_{2}"]\\
\pi_{1}(\mathcal{M}_{f(P)_{r}})\arrow[r, "\sigma"]&SL_{2}\mathbb{C}
\end{tikzcd}
$$
The resulting representation $\rho_{2}$ will also satisfy $\rho_{2}(\lambda_{y}^{r}\mu_{y})=I$.
For any abelian representation $\varepsilon:\pi_{1}(\mathcal{M}_{2})\to\{\pm I\}$, $\varepsilon$ is determined by its images $\varepsilon(\mu_{x})$ and $\varepsilon(\mu_{y})$ because $[\mu_{x}]$ and $[\mu_{y}]$ generate the first homology $H_{1}(\mathcal{M}_{2};\mathbb{Z})$.
Therefore, simple calculation shows that $\rho_{2}^{\varepsilon}=\varepsilon\cdot\rho_{2}$ is still a representation, and $\rho^{\varepsilon}_{2}\in R(\mathcal{M}_{2})$ can be constructed to satisfy $\rho^{\varepsilon}_{2}(\lambda_{y}^{r}\mu_{y})=\delta I$ for $\delta\in\{\pm1\}$ by taking $\varepsilon(\mu_{x})=I$ and $\varepsilon(\mu_{y})=\delta I$.
Also notice that $[\lambda_{x}]=w[\mu_{y}]$ and $[\lambda_{y}]=w[\mu_{x}]$ in a general winding number $w$ satellite space, so for the winding number zero case, $\varepsilon(\lambda_{x})=I$ and $\varepsilon(\lambda_{y})=I$.

To prove Theorem~\ref{wnzthm}, a family of representations $\rho_{2}^{\varepsilon}\in R(\mathcal{M}_{2})$ must extend to a family of representations $\rho_{K}=\rho_{1}\ast\rho_{2}^{\varepsilon}\in R(\mathcal{M}_{K})$, for $\rho_{1}\in R(\mathcal{M}_{1})$ which agrees with $\rho_{2}^{\varepsilon}$ along the gluing torus.
Given any $\rho_{2}^{\varepsilon}(\lambda_{y})\in SL_{2}\mathbb{C}$ as above, we will show that for $C\in\mathcal{G}_{0}$ there exists a representation $\rho_{1}\in R(\mathcal{M}_{1})$ such that $\rho_{1}(\lambda_{C})=\rho_{2}^{\varepsilon}(\lambda_{y})$ and $\rho_{1}(\lambda_{C}\mu_{C}^{r})=\delta I$ for a given factor $(LM^{r}-\delta)|A_{C}$.

To address this, we say that a representation $\rho_{0}\in R(\mathcal{M}_{K})$ {\it realizes} a point $(L_{0},M_{0})\in\mathbb{V}(A_{K})$ if $\xi(\rho_{0})=(L_{0},M_{0})$.
We say that $R(\mathcal{M}_{K})$ {\it realizes} $A_{K}$ if every $(L_{0},M_{0})\in\mathbb{V}(A_{K})\cap(\mathbb{C}^{\ast})^{2}$ is realized by some $\rho_{0}\in R(\mathcal{M}_{K})$.
For a killing slope $r\in\mathcal{DS}_{C}$ with balanced-irreducible factor $f_{0}=(LM^{r}-\delta)|A_{K}$, we say that $f_{0}$ {\it has no gaps} if every $(L_{0},M_{0})\in\mathbb{V}(f_{0})\cap(\mathbb{C}^{\ast})^{2}$ is realized by a representation $\rho_{0}\in R(\mathcal{M}_{K})$ such that $\rho_{0}(\lambda_{K}\mu_{K}^{r})=\delta I$ and $\rho_{0}(\mu_{K})\neq\pm I$.
For $K\in\mathcal{G}_{0}$, we say that $A_{K}$ {\it has no gaps} if each balanced-irreducible factor $f_{0}|A_{K}$ has no gaps.

We recall the action of $\pi_{1}(\mathcal{M}_{K})$ on a simplicial tree $T$ from~\cite{cgls_1987} and the following:
\begin{remark}
\cite[Proposition 1.3.8]{cgls_1987}
\label{vertexstab}
Assume that no point of a simplicial tree $T$ is fixed by $\pi_{1}(\mathcal{M}_{K})$, then there exists an essential surface $S$ in $\mathcal{M}_{K}$ associated to the action.
Furthermore, if $C$ is a connected subcomplex of $\partial\mathcal{M}_{K}$ such that the image of $\pi_{1}(C)$ in $\pi_{1}(\partial\mathcal{M}_{K})$ is contained in a vertex stabilizer, then $S$ may be taken to be disjoint from $C$.
\end{remark}

\begin{lemma}
\label{smallnogap}
If $K$ is a small knot in $\mathbb{S}^{3}$, then $R(\mathcal{M}_{K})$ realizes $A_{K}$.
In particular, for each balanced-irreducible factor $f_{0}|A_{K}$, each point $(L_{0},M_{0})\in\mathbb{V}(f_{0})\cap(\mathbb{C}^{\ast})^{2}$ is realized by some $\rho_{0}\in R_{f_{0}}\subset R(\mathcal{M}_{K})$ in the component of $R(\mathcal{M}_{K})$ contributing $f_{0}$.
\end{lemma}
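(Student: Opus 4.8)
The plan is to run the Culler--Shalen machinery and use smallness of $K$ to eliminate the only possible obstruction. Fix a balanced-irreducible factor $f_0\mid A_K$, and let $X_0\subset X(\mathcal{M}_K)$ be the component contributing it, $D_0=\mathbb{V}(f_0)\subset\mathbb{C}^2$ the associated curve, and $R_{f_0}\subset R(\mathcal{M}_K)$ the corresponding component, as in Section~\ref{apoly}; inside $R_{f_0}\cap R_U(\mathcal{M}_K)$ there is a curve $\mathcal{C}$ with $\overline{\xi(\mathcal{C})}=D_0$. The restriction of $\xi$ to $\mathcal{C}$ has constructible image (Chevalley) which is dense in $D_0$, hence cofinite in $D_0$; so only finitely many points of $D_0$ fail to be realized, and it suffices to show that none of these ``holes'' lies in $(\mathbb{C}^{\ast})^{2}$.

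First I would pass to smooth projective models, obtaining a surjective morphism $\widehat{\xi}\colon\widetilde{\mathcal{C}}\to\widehat{D}_0$ of projective curves together with the character map $\widetilde{\mathcal{C}}\to\widehat{X}_0$. Suppose $(L_0,M_0)\in D_0\cap(\mathbb{C}^{\ast})^{2}$ were a hole. It has a preimage $\widetilde{x}\in\widetilde{\mathcal{C}}$, which must be an ideal point of $\widetilde{\mathcal{C}}$ (a point not lying over $\mathcal{C}$), since otherwise $(L_0,M_0)\in\xi(\mathcal{C})$. At $\widetilde{x}$ the eigenvalue functions $L$ and $M$ take the finite nonzero values $L_0$ and $M_0$, so $\operatorname{tr}\rho(\mu_K)=M+M^{-1}$ and $\operatorname{tr}\rho(\lambda_K)=L+L^{-1}$ are finite there; as $\mu_K$ and $\lambda_K$ commute, every peripheral trace function $\operatorname{tr}\rho(\gamma)$ with $\gamma\in\pi_1(\partial\mathcal{M}_K)$ is finite at $\widetilde{x}$.

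I would then split into two cases according to the behaviour of trace functions at $\widetilde{x}$. If no trace function has a pole at $\widetilde{x}$, then $\widetilde{x}$ descends to a finite character $\chi_{\rho_0}\in X_0$; conjugating $\rho_0$ into $R_U(\mathcal{M}_K)$ (and invoking that $f_0$ is balanced to absorb the ambiguity between $(L_0,M_0)$ and $(L_0^{-1},M_0^{-1})$, both of which lie on $D_0$) yields a representation in $\mathcal{C}$ with eigenvalues $(L_0,M_0)$, contradicting that the point is a hole. Otherwise some trace function has a pole at $\widetilde{x}$, so $\widetilde{x}$ is a genuine ideal point of $X_0$; the associated action of $\pi_1(\mathcal{M}_K)$ on a simplicial tree has no global fixed point, so by Remark~\ref{vertexstab} it gives rise to an essential surface $S$ in $\mathcal{M}_K$. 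Because every peripheral trace function is finite at $\widetilde{x}$, the image of $\pi_1(\partial\mathcal{M}_K)$ lies in a vertex stabilizer, so $S$ may be taken disjoint from $\partial\mathcal{M}_K$; thus $S$ is a closed essential surface in $\mathcal{M}_K$, contradicting that $K$ is small. Therefore there are no holes in $(\mathbb{C}^{\ast})^{2}$: $R(\mathcal{M}_K)$ realizes $A_K$, and each $(L_0,M_0)\in\mathbb{V}(f_0)\cap(\mathbb{C}^{\ast})^{2}$ is realized by some $\rho_0\in R_{f_0}$.

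I expect the main obstacle to be the ideal-point bookkeeping in the last step: one must separate cleanly the case of an ideal point of the representation curve that is merely an artifact of the residual upper-triangular conjugation --- where careful tracking of the eigenvalue functions, together with the balancedness of $f_0$, is needed to exhibit an honest realizing representation --- from the case of a genuine ideal point of the character variety, where it is essential to use the refinement of the Culler--Shalen construction recorded in Remark~\ref{vertexstab}, rather than its basic form, in order to produce a \emph{closed} essential surface and thereby bring the smallness hypothesis to bear.
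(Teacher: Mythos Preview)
Your proposal is correct and follows essentially the same route as the paper's proof: both argue that only finitely many points can fail to be realized, take a putative hole $(L_0,M_0)\in(\mathbb{C}^\ast)^2$, observe that the peripheral trace functions remain finite there, and then invoke the Culler--Shalen tree action together with Remark~\ref{vertexstab} to produce a closed essential surface, contradicting smallness. The only cosmetic difference is that the paper phrases the limiting argument with sequences of representations approaching the hole, whereas you pass to smooth projective models and use Chevalley constructibility; your explicit case split (all traces finite versus some trace blowing up) is handled in the paper more implicitly via the remark on balancedness and conjugation by $\begin{pmatrix}0&1\\-1&0\end{pmatrix}$.
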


\begin{proof}
Let $K\subset\mathbb{S}^{3}$ be a small knot, that is, $\mathcal{M}_{K}$ contains no closed essential surfaces, and let $f_{0}|A_{K}$ be a balanced-irreducible factor of its $A$-polynomial with corresponding component $R_{f_{0}}\subset R(\mathcal{M}_{K})$.
By the construction of $A_{K}$ as $\overline{\mathrm{im}\,\xi}$, there are at most finitely many points $(L_{0},M_{0})\in\mathbb{V}(f_{0})\cap(\mathbb{C}^{\ast})^{2}$ which are not realized by a representation in $R_{f_{0}}$.
Assume for contradiction that $(L_{0},M_{0})$ is such a point, then there is a sequence of representations $\{\rho_{i}\}$ in $R_{f_{0}}$ such that $\xi(\rho_{i})=(L_{i},M_{i})$ with $(L_{i},M_{i})\to(L_{0},M_{0})$.
Therefore, the traces approach finite values, $\mathrm{tr}\rho_{i}(\mu_{K})=\chi_{\rho_{i}}(\mu_{K})=M_{i}+M_{i}^{-1}\to M_{0}+M_{0}^{-1}$ and $\mathrm{tr}\rho_{i}(\lambda_{K})=\chi_{\rho_{i}}(\lambda_{K})=L_{i}+L_{i}^{-1}\to L_{0}+L_{0}^{-1}$.
However, $\rho_{i}$ does not have a limit in $R_{f_{0}}$ by assumption, hence the corresponding sequence of characters $\{\chi_{\rho_{i}}\}$ do not converge in the component $X_{f_{0}}$ of the character variety.
Since $f_{0}$ is a balanced-irreducible factor, if $(L_{0},M_{0})$ is in $\mathbb{V}(f_{0})\cap\left(\mathbb{C}^{\ast}\times\mathbb{C}^{\ast}\right)$, then so is $({L_{0}}^{-1},{M_{0}}^{-1})$, and a representation $\rho_{0}\in\Lambda$ with $\xi(\rho_{0})=({L_{0}}^{-1},{M_{0}}^{-1})$ can by conjugated by $A=\begin{pmatrix}0&1\\-1&0\end{pmatrix}$ to get $\xi(A\rho_{0}A^{-1})=(L_{0},M_{0})$.

By~\cite{cs_1983}, $\{\chi_{\rho_{i}}\}$ converges in the projective completion $\widetilde{X}_{f_{0}}$ to an ideal point $\widetilde{x}_{f_{0}}$.
This implies that there is an essential surface $S\subset\mathcal{M}_{K}$ associated to this ideal point and a corresponding nontrivial action of $\pi_{1}(\mathcal{M}_{K})$ on a simplicial tree $T$; furthermore, for every $\gamma\in\pi_{1}(\partial\mathcal{M}_{K})$, the sequence $\{\chi_{\rho_{i}}(\gamma)\}$ is bounded and so $\pi_{1}(\partial\mathcal{M}_{K})$ is contained in a vertex stabilizer.
This implies by Remark~\ref{vertexstab} that the surface $S$ is disjoint from $\partial\mathcal{M}_{K}$, and since $S$ is properly embedded, $S$ must be closed.
This contradicts $K$ is a small knot, hence no such points $(L_{0},M_{0})$ can exist.
Therefore, every point $(L_{0},M_{0})\in\mathbb{V}(A_{K})\cap(\mathbb{C}^{\ast})^{2}$ has no gaps and specifically every $(L_{0},M_{0})\in\mathbb{V}(f_{0})\cap(\mathbb{C}^{\ast})^{2}$ is realized by some representation $\rho_{0}\in R_{f_{0}}$.
\end{proof}

In particular, for every torus knot $T(m,n)$, two-bridge knot, or Montesinos knot of at most three rational tangle summands, we have that each factor $f_{0}|A_{K}$ has no gaps.
By Remark~\ref{iteratedtorus}, $A_{T(m,n)}=(L-1)F_{(m,n)}(L,M)$ with
$$
F_{(m,n)}(L,M)\doteq\begin{cases}
LM^{mn}+1&:n=2\\
L^{2}M^{2mn}-1&:n>2.
\end{cases}
$$
Therefore, for $f_{0}=(LM^{r}-\delta)|A_{T(m,n)}$ if $\rho_{0}\in R_{f_{0}}$ realizes $(L_{0},M_{0})\in\mathbb{V}(f_{0})\cap(\mathbb{C}^{\ast})^{2}$ for $M_{0}\neq\pm1$, then up to conjugation, we may take $\rho_{0}(\mu_{K})$ to be a diagonal matrix; thus, $\rho_{0}(\lambda_{K}\mu_{K}^{r})=\delta I$.
The following lemma addresses when $M_{0}=\pm1$.

\begin{lemma}
\label{irrednogap}
Let $f_{0}(L,M)=(LM^{r}-\delta)|A_{K}(L,M)$ be a balanced-irreducible factor such that every $(L_{0},M_{0})\in\mathbb{V}(f_{0})\cap(\mathbb{C}^{\ast})^{2}$ is realized by a representation $\rho_{0}\in R_{f_{0}}$.
Then for $M_{0}=\pm1$, a representation $\rho_{0}\in R_{f_{0}}$ such that $\xi(\rho_{0})=(L_{0},M_{0})$ where $L_{0}=\delta M_{0}^{-r}$ for $r\neq0$ is always an irreducible representation.
In particular, $\rho_{0}(\mu_{K})\neq\pm I$.
\end{lemma}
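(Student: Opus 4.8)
The plan is to argue by contradiction: suppose $\rho_{0}\in R_{f_{0}}$ realizes $(L_{0},M_{0})$ with $M_{0}=\pm1$ and $r\neq0$, but $\rho_{0}$ is reducible, and derive a contradiction with $f_{0}\mid\widetilde{A}_{K}$. First I would extract the consequences of reducibility. A reducible representation is conjugate into the upper-triangular Borel subgroup, so its diagonal entries define a homomorphism $\alpha\colon\pi_{1}(\mathcal{M}_{K})\to\mathbb{C}^{\ast}$ with semisimplification $\rho_{0}^{ss}=\alpha\oplus\alpha^{-1}$, and $\alpha$ factors through $H_{1}(\mathcal{M}_{K})=\mathbb{Z}\langle[\mu_{K}]\rangle$. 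Since $\lambda_{K}$ is null-homologous, $\alpha(\lambda_{K})=1$, so $L_{0}=1$; since $\alpha([\mu_{K}])$ is an eigenvalue of $\rho_{0}(\mu_{K})$, we get $\alpha([\mu_{K}])=\pm1$, hence $\alpha$ is $\{\pm1\}$-valued, $\alpha^{-1}=\alpha$, and $\rho_{0}^{ss}=\alpha\cdot I$ is central. Consequently $\rho_{0}$ itself is conjugate into the abelian subgroup of matrices of the form $\pm\left(\begin{smallmatrix}1&c\\0&1\end{smallmatrix}\right)$, so $\rho_{0}$ lies in the abelian subvariety $R_{\mathrm{ab}}(\mathcal{M}_{K})=\{\rho : \rho \text{ factors through } H_{1}\}$, which is closed in $R(\mathcal{M}_{K})$ and isomorphic to $SL_{2}\mathbb{C}$ via $\rho\mapsto\rho(\mu_{K})$, hence smooth of dimension $3$.

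The crux is to show $\rho_{0}$ is a smooth point of all of $R(\mathcal{M}_{K})$, so that $R_{\mathrm{ab}}$ is the only component containing it. For this I would compute the Zariski tangent space $T_{\rho_{0}}R(\mathcal{M}_{K})=Z^{1}\big(\pi_{1}(\mathcal{M}_{K});(\mathfrak{sl}_{2}\mathbb{C})_{\mathrm{Ad}\,\rho_{0}}\big)$. Because $\mathrm{Ad}(\pm I)$ is trivial and $\mathrm{Ad}$ of a unipotent is unipotent, $\mathfrak{sl}_{2}\mathbb{C}$ is a $(t-1)$-primary $\mathbb{Z}\langle[\mu_{K}]\rangle$-module; and since $\Delta_{K}(1)=\pm1\neq0$ for a knot in $\mathbb{S}^{3}$, the Alexander module localized at $(t-1)$ is zero, so a short filtration argument using only $H^{\ast}(\mathcal{M}_{K};\mathbb{C})\cong H^{\ast}(S^{1};\mathbb{C})$ controls $H^{\ast}(\mathcal{M}_{K};(\mathfrak{sl}_{2}\mathbb{C})_{\mathrm{Ad}\,\rho_{0}})$ and gives $\dim Z^{1}=3$ in both the central and the parabolic-meridian cases. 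This equals $\dim R_{\mathrm{ab}}$; since $R_{\mathrm{ab}}$ is smooth of dimension $3$ and passes through $\rho_{0}$, it follows that $R(\mathcal{M}_{K})=R_{\mathrm{ab}}$ near $\rho_{0}$, so a neighborhood of $\rho_{0}$ in $R(\mathcal{M}_{K})$ consists of abelian (hence reducible) representations and no irreducible representation limits to $\rho_{0}$.

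The contradiction is then immediate. Because $r\neq0$, the factor $f_{0}=LM^{r}-\delta$ is not the abelian factor $L-1$, so the component $R_{f_{0}}$ contributing $f_{0}$ to $\widetilde{A}_{K}$ contains an irreducible representation (otherwise its characters would all be abelian and it would contribute $L-1$); irreducibility being Zariski-open and $R_{f_{0}}$ irreducible as a variety, irreducible representations are dense in $R_{f_{0}}$, so $\rho_{0}\in R_{f_{0}}$ is a limit of irreducible representations, contradicting the previous step. Hence $\rho_{0}$ is irreducible. For the last assertion: if $\rho_{0}(\mu_{K})=\pm I$, then since $\pm I$ is central and $\mu_{K}$ normally generates $\pi_{1}(\mathcal{M}_{K})$, the image of $\rho_{0}$ is central, so $\rho_{0}$ is reducible, a contradiction; thus $\rho_{0}(\mu_{K})\neq\pm I$. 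I expect the main obstacle to be the tangent-space count in the middle step: one must confirm that these reducible representations with $\pm I$ or parabolic meridian image are genuinely rigid in $R(\mathcal{M}_{K})$ even though $H^{1}(\mathcal{M}_{K};\mathbb{C})\neq0$, i.e.\ that the first-order deformation is absorbed by moving within $R_{\mathrm{ab}}$ itself --- which is exactly where "$K\subset\mathbb{S}^{3}$'' (equivalently $\Delta_{K}(1)=\pm1\neq0$) is used.
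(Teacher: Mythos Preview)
Your proof is correct and takes a genuinely different route from the paper's. The paper argues as follows: assuming $\rho_{0}\in R_{f_{0}}$ is reducible, then since $R_{f_{0}}$ has dimension at least $4$ there must exist a reducible \emph{nonabelian} $\rho_{1}\in R_{f_{0}}$ with $\chi_{\rho_{1}}=\chi_{\rho_{0}}$; but the classical Burde--de~Rham theorem says a reducible nonabelian representation with $\mathrm{tr}\,\rho(\mu_{K})=\pm2$ forces $\Delta_{K}(1)=0$, contradicting $\Delta_{K}(1)=\pm1$. You instead first observe directly that any reducible $\rho_{0}$ with $M_{0}=\pm1$ is already abelian, and then run a tangent-space computation: $\dim Z^{1}(\pi_{1};\mathfrak{sl}_{2}^{\mathrm{Ad}\,\rho_{0}})=3$, so $\rho_{0}$ is a smooth point of $R(\mathcal{M}_{K})$ and the unique component through it is $R_{\mathrm{ab}}$, contradicting $\rho_{0}\in R_{f_{0}}$ with $\dim R_{f_{0}}\geq4$. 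Both arguments ultimately rest on $\Delta_{K}(1)\neq0$: the paper invokes it through Burde--de~Rham as a black box, while you use it (equivalently, $H^{\ast}(\mathcal{M}_{K};\mathbb{C})\cong H^{\ast}(S^{1};\mathbb{C})$ together with the vanishing of the $(t-1)$-primary part of the Alexander module) to pin down the twisted cohomology. Your approach is more self-contained and deformation-theoretic, at the cost of the cohomology bookkeeping; the paper's is shorter once the cited theorem is granted. The concluding sentence about $\rho_{0}(\mu_{K})\neq\pm I$ is the same in both.
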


\begin{proof}
Assume for contradiction that such a representation $\rho_{0}\in R_{f_{0}}$ is reducible, then since $R_{f_{0}}$ is at least 4-dimensional, there is a reducible nonabelian representation $\rho_{1}\in R_{f_{0}}$ such that $\chi_{\rho_{1}}=\chi_{\rho_{0}}$.
In particular, $\mathrm{tr}\rho_{1}(\mu_{K})=2M_{0}$ which is either $2$ or $-2$.
By~\cite{burde_1967} and~\cite{derham_1967}, this implies that $1$ must be a root of the Alexander polynomial of $K$, $\Delta_{K}(1)=0$; however, $\Delta_{K}(1)=\pm1$ for every knot $K$, a contradiction.
Hence, $\rho_{0}$ must be irreducible, and thus $\rho_{0}(\mu_{K})\neq\pm I$.
\end{proof}

Since every torus knot $T(m,n)$ is a small knot, we have the following corollary, which serves as the base case for our induction on the graph knots:

\begin{corollary}
\label{torusnogaps}
For every torus knot $T(m,n)$, $A_{T(m,n)}$ has no gaps.
\end{corollary}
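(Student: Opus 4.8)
The plan is to deduce Corollary~\ref{torusnogaps} by assembling three facts that are already available: smallness of torus knot exteriors (to invoke Lemma~\ref{smallnogap}), the explicit shape of $A_{T(m,n)}$ from Remark~\ref{iteratedtorus}, and Lemma~\ref{irrednogap} together with the Seifert structure of $\mathcal{M}_{T(m,n)}$. Throughout write $K=T(m,n)$. As already noted above, $T(m,n)$ is a small knot (its exterior is Seifert fibered over a disk with two exceptional fibers and contains no closed essential surface), so Lemma~\ref{smallnogap} gives, for each balanced-irreducible factor $f_{0}\mid A_{K}$ and each $(L_{0},M_{0})\in\mathbb{V}(f_{0})\cap(\mathbb{C}^{\ast})^{2}$, a representation $\rho_{0}\in R_{f_{0}}$ with $\xi(\rho_{0})=(L_{0},M_{0})$. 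What remains is to check, case by case, that such a $\rho_{0}$ can be chosen so that $\rho_{0}(\lambda_{K}\mu_{K}^{r})=\delta I$ and $\rho_{0}(\mu_{K})\neq\pm I$, where $f_{0}=(LM^{r}-\delta)$.

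By Remark~\ref{iteratedtorus}, $A_{K}=(L-1)F_{(m,n)}(L,M)$, so the nontrivial balanced-irreducible factors are $LM^{mn}+1$ when $n=2$, and the two factors $LM^{mn}-1$, $LM^{mn}+1$ obtained from $L^{2}M^{2mn}-1=(LM^{mn}-1)(LM^{mn}+1)$ when $n>2$; in every case the killing slope is $r=mn\neq0$. The abelian factor $L-1$ is realized, for $M_{0}\neq\pm1$, by the diagonal abelian representation, which kills the homologically trivial longitude and has $\rho_{0}(\mu_{K})\neq\pm I$; I would read the no-gaps condition for $A_{K}$ as referring to the factors of $\widetilde{A}_{K}$, since the two points $(1,\pm1)$ on $\mathbb{V}(L-1)$ are realized only by representations with $\rho_{0}(\mu_{K})=\pm I$. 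Now fix a nontrivial factor $f_{0}=(LM^{mn}-\delta)$ and $(L_{0},M_{0})\in\mathbb{V}(f_{0})\cap(\mathbb{C}^{\ast})^{2}$, and split on $M_{0}$. If $M_{0}\neq\pm1$ the argument of the paragraph preceding the corollary applies verbatim: $\rho_{0}(\mu_{K})$ has distinct eigenvalues, hence is diagonalizable, and since $\rho_{0}(\lambda_{K})$ commutes with it, both may be taken simultaneously diagonal, giving $\rho_{0}(\lambda_{K}\mu_{K}^{mn})=\mathrm{diag}(L_{0}M_{0}^{mn},L_{0}^{-1}M_{0}^{-mn})=\delta I$ and $\rho_{0}(\mu_{K})\neq\pm I$.

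If $M_{0}=\pm1$ (so $L_{0}=\delta M_{0}^{-mn}=\pm1$), then Lemma~\ref{irrednogap}, applied with $r=mn\neq0$, shows $\rho_{0}$ is irreducible, and in particular $\rho_{0}(\mu_{K})\neq\pm I$. To upgrade this to $\rho_{0}(\lambda_{K}\mu_{K}^{mn})=\delta I$ I would invoke the standard structure of the torus knot group: in $\pi_{1}(\mathcal{M}_{K})\cong\langle u,v\mid u^{m}=v^{n}\rangle$ the regular fiber $h=u^{m}=v^{n}$ is central, and (up to orientation) $\lambda_{K}\mu_{K}^{mn}=h^{\pm1}$. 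Since $\rho_{0}$ is irreducible, Schur's lemma forces $\rho_{0}(h)=\pm I$; reading off the common diagonal eigenvalue $L_{0}M_{0}^{mn}=\delta$ pins this down to $\rho_{0}(\lambda_{K}\mu_{K}^{mn})=\delta I$. This exhausts the cases, so every nontrivial balanced-irreducible factor of $A_{T(m,n)}$ has no gaps.

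The main obstacle is precisely the case $M_{0}=\pm1$: Lemma~\ref{irrednogap} only delivers irreducibility of $\rho_{0}$ and $\rho_{0}(\mu_{K})\neq\pm I$, and from irreducibility alone one learns only that $\rho_{0}(\mu_{K})$ has trace $\pm2$ and is not $\pm I$, and that $\rho_{0}(\lambda_{K}\mu_{K}^{mn})$ is upper triangular with diagonal $\delta I$ — not that its off-diagonal entry vanishes. Closing that last gap is exactly where the Seifert structure has to enter, through the centrality of $h$ and the word identity $\lambda_{K}\mu_{K}^{mn}=h^{\pm1}$; I would isolate this as a short lemma and verify the identity directly from the standard presentation. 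A minor secondary point is lining up the base case of the later induction on $\mathcal{G}_{0}$: the graph knots are generated from the unknot, for which $A_{U}=L-1$ is trivial, and the first cabling step produces torus knots, so Corollary~\ref{torusnogaps} is the substantive base case.
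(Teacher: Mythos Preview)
Your proof is correct and follows essentially the same route as the paper: smallness of torus knots plus Lemma~\ref{smallnogap} to realize every point, diagonalization for $M_{0}\neq\pm1$, and Lemma~\ref{irrednogap} for $M_{0}=\pm1$. Where you differ is in being more careful than the paper: the paper's discussion leading up to the corollary handles $M_{0}\neq\pm1$ and then invokes Lemma~\ref{irrednogap} for $M_{0}=\pm1$, but that lemma literally only yields $\rho_{0}(\mu_{K})\neq\pm I$; you correctly notice that the remaining condition $\rho_{0}(\lambda_{K}\mu_{K}^{mn})=\delta I$ still needs a word, and you supply it via the centrality of the regular fiber $h=\lambda_{K}\mu_{K}^{mn}$ in $\pi_{1}(\mathcal{M}_{T(m,n)})$ and Schur's lemma. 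That extra step is sound and is the natural way to close the argument; the paper leaves it implicit.
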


The above corollary guarantees for each factor $f_{0}\doteq(LM^{r}-\delta)|A_{T(m,n)}$, each $(L_{0},M_{0})\in\mathbb{V}(f_{0})\cap(\mathbb{C}^{\ast})^{2}$ is realized by a representation $\rho_{0}\in R_{f_{0}}$ such that $\xi(\rho_{0})=(L_{0},M_{0})$, $\rho_{0}(\lambda\mu^{r})=\delta I$, and $\rho_{0}(\mu)\neq\pm I$.

\begin{lemma}
\label{sumnogaps}
If $K_{1},K_{2}\in\mathcal{G}_{0}$ are knots where $A_{K_{1}},A_{K_{2}}$ have no gaps, then $A_{K_{1}\#K_{2}}$ has no gaps.
\end{lemma}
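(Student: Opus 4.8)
The plan is to leverage Theorem~\ref{gzconnect}, which tells us that every balanced-irreducible factor of $A_{K_1\#K_2}$ is of the form $(LM^{t}-\varepsilon)$ arising from a pair of factors $f_1=(LM^{r}-\delta_1)|A_{K_1}$ and $f_2=(LM^{s}-\delta_2)|A_{K_2}$ with $t=r+s$ and $\varepsilon=\delta_1\delta_2$. Fix such a factor $f_0=(LM^{t}-\varepsilon)$ and a point $(L_0,M_0)\in\mathbb{V}(f_0)\cap(\mathbb{C}^\ast)^2$, so $L_0=\varepsilon M_0^{-t}$. The goal is to produce a representation $\rho_0\in R(\mathcal{M}_{K_1\#K_2})$ with $\xi(\rho_0)=(L_0,M_0)$, $\rho_0(\lambda_{K}\mu_{K}^{t})=\varepsilon I$, and $\rho_0(\mu_K)\neq\pm I$. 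First I would set $L_1=\delta_1 M_0^{-r}$ and $L_2=\delta_2 M_0^{-s}$, so that $(L_1,M_0)\in\mathbb{V}(f_1)\cap(\mathbb{C}^\ast)^2$ and $(L_2,M_0)\in\mathbb{V}(f_2)\cap(\mathbb{C}^\ast)^2$ and $L_1L_2=L_0$. Since $A_{K_1}$ and $A_{K_2}$ have no gaps, there exist $\rho_1\in R_{f_1}\subset R(\mathcal{M}_{K_1})$ and $\rho_2\in R_{f_2}\subset R(\mathcal{M}_{K_2})$ with $\xi(\rho_i)=(L_i,M_0)$, $\rho_i(\lambda_{K_i}\mu_{K_i}^{\ast})=\delta_i I$ along the appropriate killing slope, and $\rho_i(\mu_{K_i})\neq\pm I$.

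The key point is then to amalgamate these two representations. By Lemma~\ref{cooperlong}, $\rho_1$ and $\rho_2$ glue to a representation $\rho=\rho_1\ast\rho_2\in R(\mathcal{M}_{K_1\#K_2})$ precisely when they agree on the common meridian $\mu$. Since $\rho_i(\mu_{K_i})\neq\pm I$, each $\rho_i(\mu_{K_i})$ is a non-central matrix with eigenvalues $M_0,M_0^{-1}$, hence diagonalizable; up to conjugating $\rho_1$ and $\rho_2$ independently we may assume $\rho_1(\mu_{K_1})=\rho_2(\mu_{K_2})=\mathrm{diag}(M_0,M_0^{-1})$. This uses crucially the condition $\rho_i(\mu_{K_i})\neq\pm I$ supplied by the no-gaps hypothesis — it is exactly what allows simultaneous diagonalization of the meridians, which is the mechanism by which the gluing is achieved. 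The amalgamated representation $\rho$ then satisfies $\xi(\rho)=(L_1L_2,M_0)=(L_0,M_0)$ by Lemma~\ref{cooperlong}, and since $\rho(\mu_K)=\mathrm{diag}(M_0,M_0^{-1})\neq\pm I$ and $\rho(\lambda_K)=\rho_1(\lambda_{K_1})\rho_2(\lambda_{K_2})$ is upper triangular with diagonal entries $L_0,L_0^{-1}$, we can compute $\rho(\lambda_K\mu_K^{t})$ directly. Because $\rho_i(\lambda_{K_i}\mu_{K_i}^{r_i})=\delta_i I$ for the respective slopes $r_i\in\{r,s\}$ and the meridians are the common diagonal matrix, the product $\rho(\lambda_K\mu_K^{t})=\rho_1(\lambda_{K_1}\mu^{r})\rho_2(\lambda_{K_2}\mu^{s})=\delta_1\delta_2 I=\varepsilon I$, as required.

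It remains to handle the possibility that one of $f_1,f_2$ is the trivial factor $L-1$; in that case the corresponding $\rho_i$ may be taken reducible with $\rho_i(\mu_{K_i})$ still non-central — but here one should instead use that the resulting factor of $A_{K_1\#K_2}$ coincides (up to normalization) with the other knot's factor, and the no-gaps property is inherited directly from $A_{K_2}$ (or $A_{K_1}$) via the inclusion $\mathcal{M}_{K_2}\hookrightarrow\mathcal{M}_{K_1\#K_2}$, extending the realizing representation by the abelian (reducible) representation on the $\mathcal{M}_{K_1}$ side with matching meridian. I expect the main obstacle to be the bookkeeping around which component $R_{f_0}$ of $R(\mathcal{M}_{K_1\#K_2})$ the amalgamated representation $\rho$ actually lands in — one needs $\rho$ to lie in the component contributing $f_0$, not merely in $R(\mathcal{M}_{K_1\#K_2})$, so that the statement ``$f_0$ has no gaps'' is genuinely verified. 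This should follow because the curve $\mathbb{V}(f_0)$ is irreducible and $\xi(\rho)$ traces out a Zariski-dense subset of it as $M_0$ varies over $\mathbb{C}^\ast$, forcing the closure of the image of the relevant component of $R(\mathcal{M}_{K_1\#K_2})$ to be exactly $\mathbb{V}(f_0)$; but making this precise, together with the edge cases where several factor-pairs collapse to the same slope $t$ under the $\mathrm{Red}[\,\cdot\,]$ operation, is where the real care is needed.
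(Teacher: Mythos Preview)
Your overall strategy matches the paper's proof: use Theorem~\ref{gzconnect} to identify each factor of $A_{K_1\#K_2}$ with a pair of factors $(LM^{r}-\delta_1),(LM^{s}-\delta_2)$, invoke the no-gaps hypothesis on each side to obtain $\rho_1,\rho_2$ with $\rho_i(\mu)\neq\pm I$, and then amalgamate via Lemma~\ref{cooperlong} after arranging $\rho_1(\mu)=\rho_2(\mu)$.

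However, there is a genuine gap in your amalgamation step. You assert that since $\rho_i(\mu_{K_i})\neq\pm I$ with eigenvalues $M_0,M_0^{-1}$, it is ``hence diagonalizable''. This fails precisely when $M_0=\pm 1$: then $M_0=M_0^{-1}$, the eigenvalues coincide, and a non-central matrix with trace $\pm 2$ in $SL_2\mathbb{C}$ is \emph{not} diagonalizable---it is conjugate to the Jordan block $M_0\begin{pmatrix}1&1\\0&1\end{pmatrix}$. The paper handles this by splitting into cases: for $M_0\neq\pm 1$ one diagonalizes as you do, while for $M_0=\pm 1$ one observes that both $\rho_i(\mu)$ are conjugate to the \emph{same} Jordan block, so one can still conjugate independently to make them agree. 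The fix is easy, but as written your argument does not cover these points, and they are exactly the points where Lemma~\ref{irrednogap} is doing work.

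Two smaller remarks. First, your worry about landing in the correct component $R_{f_0}$ is unnecessary: the paper's definition of ``$f_0$ has no gaps'' only asks for $\rho_0\in R(\mathcal{M}_K)$ with $\rho_0(\lambda_K\mu_K^{r})=\delta I$ and $\rho_0(\mu_K)\neq\pm I$, not membership in a particular irreducible component, so there is nothing to check. Second, your separate treatment of the trivial factor $L-1$ is fine but also unnecessary once the $M_0=\pm 1$ case is handled uniformly, since the argument above already covers $r=0$ or $s=0$.
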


\begin{proof}
Let $K=K_{1}\#K_{2}$, then each pair of factors $(L_{1}M^{r_{i}}-\delta_{i})|A_{K_{1}}$ and $(L_{2}M^{s_{j}}-\delta_{j})|A_{K_{2}}$ contributes the factor $(LM^{r_{i}+s_{j}}-\delta_{i}\delta_{j})|A_{K}$ by Theorem~\ref{gzconnect}.
For a given $M_{0}\in\mathbb{C}^{\ast}$, this determines $L_{1}=\delta_{i}M_{0}^{-r_{i}}$, $L_{2}=\delta_{j}M_{0}^{-s_{j}}$, and therefore $L_{0}=\delta_{i}\delta_{j}M^{-r_{i}-s_{j}}$.
Since each $A_{K_{i}}$ has no gaps and $L_{1},L_{2}\in\mathbb{C}^{\ast}$, there exist representations $\rho_{i}\in R(\mathcal{M}_{K_{i}})$ that realize $(L_{i},M_{0})$ with $\rho_{i}(\mu_{K})\neq\pm I$ for $i=1,2$.
Furthermore, $\rho_{1}(\lambda_{1}\mu_{K}^{r_{i}})=\delta_{i}I$ and $\rho_{2}(\lambda_{2}\mu_{K}^{s_{j}})=\delta_{j}I$, and so these representations will also satisfy $\rho_{1}(\lambda_{1})=\delta_{i}\rho_{1}(\mu_{K})^{-r_{i}}$ and $\rho_{2}(\lambda_{2})=\delta_{j}\rho_{2}(\mu_{K})^{-s_{j}}$.
If $M_{0}\neq\pm1$, that is $\mathrm{tr}\rho_{i}(\mu_{K})\neq\pm2$, then both $\rho_{i}(\mu_{K})$ are diagonalizable, so up to conjugation we have $\rho_{i}(\mu_{K})\begin{pmatrix}M_{0}&0\\0&M_{0}^{-1}\end{pmatrix}\neq\pm I$ for $i=1,2$.
If $M_{0}=\pm1$, then up to conjugation, $\rho_{i}(\mu_{K})=M_{0}\begin{pmatrix}1&1\\0&1\end{pmatrix}$ for $i=1,2$ since $(L_{i},M_{0})$ is not a gap of $A_{K_{i}}$ and so $\rho_{i}(\mu_{K})\neq\pm I$.
Hence, in either case these representations agree on the gluing annulus $A$ of the connected sum, and so $\rho_{1}\ast\rho_{2}\in R(\mathcal{M}_{K_{1}\#K_{2}})$ with $\xi(\rho_{1}\ast\rho_{2})=(L_{1}L_{2},M_{0})=(\delta_{i}\delta_{j}M_{0}^{-r_{i}-s_{j}},M_{0})$, $(\rho_{1}\ast\rho_{2})(\mu_{K})\neq\pm I$, and $(\rho_{1}\ast\rho_{2})(\lambda_{1}\lambda_{2}\mu_{K}^{r_{i}+s_{j}})=\delta_{1}\delta_{2}I$.
Therefore, $A_{K_{1}\#K_{2}}$ has no gaps.
\end{proof}

\begin{lemma}
\label{cablenogaps}
For a knot $C\in\mathcal{G}_{0}$ where $A_{C}$ has no gaps, then $A_{[(p,q),C]}$ has no gaps.
In particular, the factor $F_{(p,q)}$ has no gaps.
\end{lemma}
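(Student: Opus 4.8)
The plan is to argue factor by factor, in the style of Lemma~\ref{sumnogaps}. By Theorem~\ref{gzcable}, $A_{[(p,q),C]}\doteq\mathrm{Red}\left[(L-1)F_{(p,q)}(L,M)\prod_{j\in J}\left(LM^{r_{j}q^{2}}-\delta_{j}^{q}\right)\right]$, so each balanced-irreducible factor of $A_{[(p,q),C]}$ is $L-1$, a factor of $F_{(p,q)}$, or a factor $LM^{r_{j}q^{2}}-\delta_{j}^{q}$ arising from a factor $LM^{r_{j}}-\delta_{j}$ of $A_{C}$. Throughout I use the decomposition $\mathcal{M}_{[(p,q),C]}=\mathcal{M}_{C}\cup_{T}W$, where $W=V-\overset{\circ}{N}(f(P))$ is the cable space and $T$ is glued to $\partial\mathcal{M}_{C}$ via $\mu_{C}\leftrightarrow\mu_{V}$, $\lambda_{C}\leftrightarrow\lambda_{V}$. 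For the factor $L-1$ there is nothing to do: an abelian representation of $\mathcal{M}_{[(p,q),C]}$ can send $\mu_{K'}$ to any element of $SL_{2}\mathbb{C}$ with eigenvalue $M_{0}$ (a non-trivial parabolic when $M_{0}=\pm1$), and automatically kills $\lambda_{K'}$, so $L-1$ has no gaps.

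For $F_{(p,q)}$ I reduce to the torus knot $T(p,q)=[(p,q),U]$, which has no gaps by Corollary~\ref{torusnogaps}. The cable space $W$ is literally the same submanifold for $[(p,q),C]$ and for $[(p,q),U]$, and a Mayer--Vietoris comparison of $H_{1}$ (in both cases the filling piece contributes $\lambda\mapsto0$ and $\mu$ to a generator) shows that the preferred-longitude slope on $\partial N(K')$ agrees in the two cases, so the $(L,M)$-coordinates read off on $\partial N(K')$ agree. Given $(L_{0},M_{0})\in\mathbb{V}(F_{(p,q)})\cap(\mathbb{C}^{\ast})^{2}$, choose $\rho'\in R(\mathcal{M}_{T(p,q)})$ witnessing no gaps for $F_{(p,q)}$, so $\xi(\rho')=(L_{0},M_{0})$, $\rho'(\mu)\neq\pm I$, and $\rho'(\lambda\mu^{r})=\delta I$ for the killing slope $r$ of the factor. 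Its restriction $\rho_{W}=\rho'|_{\pi_{1}(W)}$ satisfies $\rho_{W}(\lambda_{V})=I$, since $\lambda_{V}=\lambda_{U}$ bounds a disc in $\mathcal{M}_{U}$. Because $\lambda_{C}$ is null-homologous in $\mathcal{M}_{C}$, there is an abelian representation $\rho_{1}$ of $\mathcal{M}_{C}$ with $\rho_{1}(\mu_{C})=\rho_{W}(\mu_{V})$ and $\rho_{1}(\lambda_{C})=I$; it agrees with $\rho_{W}$ on $\partial_{\mathrm{out}}W$, so $\rho=\rho_{1}\ast\rho_{W}\in R(\mathcal{M}_{[(p,q),C]})$ with $\xi(\rho)=(L_{0},M_{0})$, $\rho(\mu_{K'})=\rho'(\mu)\neq\pm I$, and $\rho(\lambda_{K'}\mu_{K'}^{r})=\rho'(\lambda\mu^{r})=\delta I$. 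Hence $F_{(p,q)}$ has no gaps.

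For a factor $LM^{r_{j}q^{2}}-\delta_{j}^{q}$, fix $M_{0}\in\mathbb{C}^{\ast}$ and set $\overline{M}_{0}=M_{0}^{q}$, $\overline{L}_{0}=\delta_{j}\overline{M}_{0}^{-r_{j}}$, so $(\overline{L}_{0},\overline{M}_{0})\in\mathbb{V}(LM^{r_{j}}-\delta_{j})\cap(\mathbb{C}^{\ast})^{2}$. Since $A_{C}$ has no gaps, pick $\rho_{1}\in R(\mathcal{M}_{C})$ with $\xi(\rho_{1})=(\overline{L}_{0},\overline{M}_{0})$, $\rho_{1}(\mu_{C})\neq\pm I$, and $\rho_{1}(\lambda_{C}\mu_{C}^{r_{j}})=\delta_{j}I$. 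I then extend $\rho_{1}$ over the cable space $W$, as in the proof of Lemma~\ref{generalcabling}: using the Seifert-fibred structure of $W$ over an annulus with one cone point of order $q$, one produces $\rho_{W}$ on $\pi_{1}(W)$ restricting to $\rho_{1}$ on $\partial_{\mathrm{out}}W$, and the amalgam $\rho=\rho_{1}\ast\rho_{W}$ realizes $(\delta_{j}^{q}M_{0}^{-r_{j}q^{2}},M_{0})$. When $\overline{M}_{0}\notin\{\pm1\}$ — which forces $M_{0}\notin\{\pm1\}$ — $\rho(\mu_{K'})$ has distinct eigenvalues, hence $\rho(\mu_{K'})\neq\pm I$, and $\rho(\lambda_{K'})$ is simultaneously diagonalisable with it, so $\rho(\lambda_{K'}\mu_{K'}^{r_{j}q^{2}})=\delta_{j}^{q}I$ automatically. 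For the remaining values, $\overline{M}_{0}=\pm1$, $\rho_{1}$ is a non-central parabolic and, by Lemma~\ref{irrednogap}, irreducible; once realisation of every point of $\mathbb{V}(LM^{r_{j}q^{2}}-\delta_{j}^{q})\cap(\mathbb{C}^{\ast})^{2}$ has been established this way, Lemma~\ref{irrednogap} applied to $[(p,q),C]$ gives that the realizing $\rho$ there is irreducible, so $\rho(\mu_{K'})\neq\pm I$, and one checks that the slope-$r_{j}q^{2}$ curve on $\partial N(K')$ corresponds through $W$ to a curve that $\rho$ sends to exactly $\pm I$, using that $\rho_{1}$ sends the slope-$r_{j}$ curve on $\partial\mathcal{M}_{C}$ to exactly $\delta_{j}I$ as a matrix identity. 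Therefore every balanced-irreducible factor of $A_{[(p,q),C]}$ has no gaps, so $A_{[(p,q),C]}$ has no gaps, and in particular $F_{(p,q)}$ has no gaps.

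The step I expect to be the main obstacle is the extension of the companion representation $\rho_{1}$ over the cable space $W$: producing an honest $SL_{2}\mathbb{C}$-representation of $\pi_{1}(W)$ with prescribed restriction to $\partial_{\mathrm{out}}W$ and the correct eigenvalue coordinates on $\partial N(K')$, and then, at the parabolic values $M_{0}^{q}=\pm1$, arranging the unipotent parts so that the slope-$r_{j}q^{2}$ curve is sent to $\pm I$ rather than merely to a parabolic with the right eigenvalue. This is where the Seifert geometry of $W$ and the matrix identity $\rho_{1}(\lambda_{C}\mu_{C}^{r_{j}})=\delta_{j}I$ (not just the trace condition) have to be used carefully, and it is the analogue of the $M_{0}=\pm1$ analysis appearing in Lemmas~\ref{irrednogap} and~\ref{sumnogaps}.
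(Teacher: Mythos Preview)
Your reduction of the $F_{(p,q)}$ case to the torus knot $T(p,q)$ has a genuine hole: when $|p|=1$ the pattern $[(p,q),U]$ is the unknot, so $A_{T(p,q)}=L-1$ and $F_{(p,q)}$ is \emph{not} a factor of it. There is therefore no representation $\rho'\in R(\mathcal{M}_{T(p,q)})$ witnessing a point of $\mathbb{V}(F_{(p,q)})$ with $L_{0}\neq1$, and your transplant argument produces nothing. Nonetheless $F_{(p,q)}$ \emph{does} divide $A_{[(p,q),C]}$ for nontrivial $C$ (Theorem~\ref{gzcable}), so the no-gaps claim for this factor must be proved by other means. The paper handles $|p|=1$ by a completely different and substantially harder argument: it invokes Kronheimer--Mrowka to obtain an irreducible $SU(2)$-representation of $\pi_{1}(\mathcal{M}_{C}(p/q))$, then builds an explicit irreducible representation of the cable space $\pi_{1}(\mathcal{M}_{2})\cong\langle\alpha,\beta\mid\gamma_{C}=\alpha^{q},\ [\gamma_{C},\beta]=1\rangle$ by hand (choosing $\rho_{2}(\alpha)=ABA^{-1}$ with $B$ diagonal of order $q$ or $2q$ and solving for $A$ so that $\mathrm{tr}\rho_{2}(\alpha\beta)=M_{0}+M_{0}^{-1}$). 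This is the real content of the lemma and is missing from your proposal.

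For the factors $LM^{r_{j}q^{2}}-\delta_{j}^{q}$ your sketch is essentially right in the generic regime $M_{0}^{q}\neq\pm1$ (the paper uses an explicit abelian $\rho_{2}$ on $W$), but your treatment of the remaining values is incomplete. Lemma~\ref{irrednogap} only addresses $M_{0}=\pm1$; it says nothing about $M_{0}\neq\pm1$ with $M_{0}^{q}=\pm1$, and for those $M_{0}$ your companion representation $\rho_{1}$ has $\rho_{1}(\mu_{C})=\pm I$ being allowed (since $\overline{M}_{0}=\pm1$ need not force $\rho_{1}(\mu_{C})\neq\pm I$ unless you are at $\overline{M}_{0}=\pm1$ \emph{and} invoke the no-gaps hypothesis on $C$, but then the abelian extension over $W$ you allude to does not give the desired eigenvalue $M_{0}$ on $\mu_{K'}$). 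The paper's fix is to show that any such point $(L_{0},M_{0})$ with $M_{0}^{q}=\pm1$, $M_{0}\neq\pm1$, $L_{0}=-1$ actually lies in $\mathbb{V}(F_{(p,q)})$, and then to check that the representation already constructed for $F_{(p,q)}$ also satisfies $\rho_{K}(\lambda_{K}\mu_{K}^{r_{j}q^{2}})=\delta_{j}^{q}I$. This cross-use of the $F_{(p,q)}$ argument is not visible in your outline.
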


\begin{proof}
Letting $K=[(p,q),C]$, each factor $f_{0}(\overline{L},\overline{M})=(\overline{L}\,\overline{M}^{r}-\delta)$ of $A_{C}$ contributes a factor $g_{0}(L,M)=(LM^{rq^{2}}-\delta^{q})$ of $A_{K}$; additionally, there is the factor $F_{(p,q)}(L,M)$ of $A_{K}$.
It suffices to show that every $(L_{0},M_{0})\in\mathbb{V}(g_{0})\cap(\mathbb{C}^{\ast})^{2}$ is realized by some $\rho\in R(\mathcal{M}_{K})$ and every $(L,M)\in\mathbb{V}(F_{(p,q)})$ is realized by some $\rho\in R(\mathcal{M}_{K})$.

We begin with the factor $F_{(p,q)}$ using a modified argument of Claim 2.9 in~\cite{nz_2017}.
If $|p|>1$, then notice that by Corollary~\ref{torusnogaps}, for any $M_{0}\in\mathbb{C}^{\ast}$, there is a representation $\sigma_{M_{0}}\in R(\mathcal{M}_{T(p,q)})$ realizing $(L_{0},M_{0})\in\mathbb{V}(F_{(p,q)})\cap(\mathbb{C}^{\ast})^{2}$ with $\sigma_{M_{0}}(\lambda_{K}\mu_{K}^{pq})=\pm I$ and $\sigma_{M_{0}}(\mu_{K})\neq\pm I$.
Composing with the induced quotient homomorphism $Q_{0\ast}:\pi_{1}(\mathcal{M}_{2})\to\pi_{1}(\mathcal{M}_{T(p,q)})$ gives the representation $\rho_{2}=\sigma_{M_{0}}\circ Q_{0\ast}\in R(\mathcal{M}_{2})$ and so extend to the representation $\rho_{K}=\mathrm{id}\ast\rho_{2}\in R(\mathcal{M}_{K})$.
Hence, every $(L_{0},M_{0})\in\mathbb{V}(F_{(p,q)})\cap(\mathbb{C}^{\ast})^{2}$ is realized by some $\rho_{K}\in R(\mathcal{M}_{K})$ for $|p|>1$ with $\rho_{K}(\lambda_{K}\mu_{K}^{pq})=\pm I$ and $\rho_{K}(\mu_{K})\neq\pm I$.
If $|p|=1$, then since the quotient map would give us the unknot $T(p,q)\cong U$, we recall from the discussion in~\cite{nz_2017}, the fundamental group of the cable space for $p=\pm1$:
$$
\pi_{1}(\mathcal{M}_{2})\cong\langle\alpha,\beta|\gamma_{C}=\alpha^{q},\gamma_{C}\beta=\beta\gamma_{C}\rangle,
$$
for a Seifert fiber of $\mathcal{M}_{2}$ $\gamma_{C}=\alpha^{q}$ lying in $\partial V$, and a Seifert fiber of $\mathcal{M}_{2}$ $\gamma_{K}=\lambda_{K}\mu_{K}^{pq}$ lying in $\partial\mathcal{M}_{K}$.
As described in~\cite{nz_2017} that $\mu_{K}=\alpha\beta$ and $\lambda_{K}=\gamma_{K}\mu_{K}^{-pq}$ with $\rho(\gamma_{C})=\rho(\gamma_{K})=\pm I$ for any irreducible representation $\rho\in R(\mathcal{M}_{2})$.

Letting $(L_{0},M_{0})\in\mathbb{V}(LM^{pq}-1)$ for $q>2$, then because $\mathcal{M}_{C}(p/q)$ is a homology sphere, by \cite{km_2004}, there must exist an irreducible $SU(2)$-representation of $\pi_{1}(\mathcal{M}_{C})$ and hence an irreducible representation $\rho_{1}\in R(\mathcal{M}_{C}(p/q))\subset R(\mathcal{M}_{C})$ satisfying $\rho_{1}(\lambda_{C}^{q}\mu_{C}^{p})=I$ and $\mathrm{tr}\rho_{1}(\lambda_{C})\neq\pm2$.
Hence, up to conjugation we may assume that $\rho_{1}$ satisfies the following:
\begin{align*}
\rho_{1}(\lambda_{C})&=\begin{pmatrix}\ell&0\\0&\ell^{-1}\end{pmatrix}&\rho_{1}(\mu_{C})&=\begin{pmatrix}\ell^{-pq}&0\\0&\ell^{pq}\end{pmatrix},
\end{align*}
for some choice of $\ell\neq\pm1$ and $\ell^{q}\neq\pm1$.
We define $\rho_{2}(\alpha)=ABA^{-1}$ for matrices $A,B\in SL_{2}\mathbb{C}$ such that $A=\begin{pmatrix}a&b\\c&d\end{pmatrix}$, $B=\begin{pmatrix}z&0\\0&z^{-1}\end{pmatrix}$ with $z^{q}=1$ and $z\neq\pm1$.
Simple calculation shows we may take $a\in\mathbb{C}^{\ast}$, $b=1$, $c=\frac{M_{0}+M_{0}^{-1}-\ell z-\ell^{-1}z^{-1}}{(\ell-\ell^{-1})(z-z^{-1})}\neq0$, and $d=\frac{c+1}{a}$.
Notice that we may choose $\ell,z\in\mathbb{C}-\{\pm1\}$ so that $M_{0}\neq\ell z$ and $M_{0}\neq(\ell z)^{-1}$, and therefore $c\neq0$.
Thus, $\mathrm{tr}\rho_{2}(\alpha\beta)=M_{0}+M_{0}^{-1}$ and $\mathrm{tr}\rho_{2}(\beta)=\mathrm{tr}\rho_{1}(\lambda_{C})=\ell+\ell^{-1}$ with
$$
\rho_{2}(\lambda_{K})=\rho_{2}(\alpha^{q}\mu_{K}^{-pq})=I\cdot\rho_{2}(\mu_{K})^{-pq}=\rho_{2}(\mu_{K})^{-pq}
$$
since $\rho_{2}(\alpha^{q})=I$ by construction.
Hence, up to conjugation, we may extend $\rho_{1}$ to $\rho_{K}=\rho_{1}\ast\rho_{2}$ so that $\rho_{K}(\mu_{K})=\begin{pmatrix}M_{0}&0\\0&M_{0}^{-1}\end{pmatrix}$ and thus $\rho_{K}(\lambda_{K})=\rho_{K}(\mu_{K})^{-pq}$ for all $M_{0}\neq\pm1$, and so $\rho_{K}(\lambda_{K}\mu_{K}^{pq})=I$.
However, if $M_{0}=\pm1$, then since $\ell\neq\pm1$, $\rho_{1}(\lambda_{C})\neq\pm I$ and so $\rho_{2}(\alpha\beta)\neq\pm I$.
Since $\mathrm{tr}\rho_{2}(\mu_{K})=M_{0}+M_{0}^{-1}=\pm2$, it follows that $\rho_{2}(\mu_{K})=M_{0}\begin{pmatrix}1&1\\0&1\end{pmatrix}$ up to conjugation, and  thus $\rho_{2}(\lambda_{K})=\rho_{2}(\alpha^{q}\mu_{K}^{-pq})=\rho_{2}(\mu_{K})^{-pq}$ as before.
Therefore, $(LM^{pq}-1)$ does not have any gaps.

If $(L,M)\in\mathbb{V}(LM^{pq}+1)$ for $q\geq2$, then we construct the representation $\rho_{K}$ similarly instead using $z^{q}=-1$ so that $z^{2q}=1$, hence $\rho_{K}(\alpha^{q})=-I$.
Therefore every $(L_{0},M_{0})\in\mathbb{V}(F_{(p,q)})\cap(\mathbb{C}^{\ast})^{2}$ is realized by a representation $\rho_{K}\in R(\mathcal{M}_{K})$ with $\rho_{K}(\mu_{K})\neq\pm I$ and $\rho_{K}(\lambda_{K}\mu_{K}^{pq})=\pm I$; hence, $F_{(p,q)}|A_{[(p,q),C]}$ has no gaps.

For the factor $g_{0}=(LM^{rq^{2}}-\delta^{q})|A_{K}$ contributed by $f_{0}=(\overline{L}\,\overline{M}^{r}-\delta)|A_{C}$, recall every $(\overline{L},\overline{M})\in\mathbb{V}(\overline{L}\,\overline{M}^{r}-\delta)$ is realized by some representation $\rho_{1}\in R(\mathcal{M}_{C})$ with $\rho_{1}(\mu_{K})\neq\pm I$ and $\rho_{1}(\lambda_{C}\mu_{C}^{r})=\delta I$.
For $(L_{0},M_{0})\in\mathbb{V}(g_{0})\cap(\mathbb{C}^{\ast})^{2}$, if $M_{0}^{q}=\overline{M}\neq\pm1$, then up to conjugation, $\rho_{1}(\mu_{C})$ and $\rho_{1}(\lambda_{C})$ are diagonal, and we may extend $\rho_{1}$ to $\rho_{K}=\rho_{1}\ast\rho_{2}$ via the abelian representation $\rho_{2}$:
\begin{align*}
\rho_{2}(\lambda_{K})&=\rho_{1}(\lambda_{C})^{q}=\delta^{q}\begin{pmatrix}M_{0}^{-rq^{2}}&0\\0&M_{0}^{rq^{2}}\end{pmatrix}&\rho_{2}(\mu_{K})&=\begin{pmatrix}M_{0}&0\\0&M_{0}^{-1}\end{pmatrix}\\
\rho_{2}(\mu_{C})&=\rho_{2}(\mu_{K})^{q}=\begin{pmatrix}M_{0}^{q}&0\\0&M_{0}^{-q}\end{pmatrix}&\rho_{2}(\beta)&=\rho_{1}(\lambda_{C})=\delta\begin{pmatrix}M_{0}^{-rq}&0\\0&M_{0}^{rq}\end{pmatrix},
\end{align*}
and hence $(L_{0},M_{0})$ for $M_{0}^{q}\neq\pm1$ is realized by some representation $\rho_{K}\in R(\mathcal{M}_{K})$ with $\rho_{K}(\mu_{K})\neq\pm I$ and $\rho_{K}(\lambda_{K}\mu_{K}^{rq^{2}})=\delta^{q}I$.

Similarly, if $M_{0}=\pm1$, then for $q$ even, we have $\overline{M}=M_{0}^{q}=1$ and $L_{0}=\delta^{q}=1$, so we may take the abelian representation $\rho_{2}$ given by
\begin{align*}
\rho_{2}(\lambda_{K})&=\rho_{1}(\lambda_{C})^{q}=\begin{pmatrix}1&-rq^{2}\\0&1\end{pmatrix}&\rho_{2}(\mu_{K})&=M_{0}\begin{pmatrix}1&1\\0&1\end{pmatrix}\\
\rho_{2}(\mu_{C})&=\rho_{2}(\mu_{K})^{q}=\begin{pmatrix}1&q\\0&1\end{pmatrix}&\rho_{2}(\beta)&=\rho_{1}(\lambda_{C})=\delta\begin{pmatrix}1&-rq\\0&1\end{pmatrix}.
\end{align*}
This representation agrees with the irreducible representation $\rho_{1}\in R(\mathcal{M}_{K})$ realizing the point $(\delta,1)\in\mathbb{V}(\overline{L}\,\overline{M}^{r}-\delta)\cap(\mathbb{C}^{\ast})^{2}$, and hence $(1,\pm1)$ is realized by $\rho_{K}\in R(\mathcal{M}_{K})$ with $\rho_{K}(\mu_{K})\neq\pm I$ and $\rho_{K}(\lambda_{K}\mu_{K}^{rq^{2}})=\delta^{q}I$.
If $M_{0}=\pm1$ and $q$ is odd, notice that $\overline{M}=M_{0}^{q}$ and $\overline{L}=\delta M_{0}^{-rq}$, and since $A_{C}$ has no gaps, the point $(\delta M_{0}^{-rq},M_{0}^{q})\in\mathbb{V}(\overline{L}\,\overline{M}^{r}-\delta)$ is realized by some $\rho_{1}\in R(\mathcal{M}_{C})$ up to conjugation so that
\begin{align*}
\rho_{1}(\lambda_{C})&=\delta\rho_{1}(\mu_{C})^{-r}=\delta M_{0}^{-rq}\begin{pmatrix}1&-rq\\0&1\end{pmatrix}&\rho_{1}(\mu_{C})&=M_{0}^{q}\begin{pmatrix}1&q\\0&1\end{pmatrix}
\end{align*}
and this representation can be extended to $\rho_{K}=\rho_{1}\ast\rho_{2}\in R(\mathcal{M}_{K})$ via the abelian representation $\rho_{2}$ given by
\begin{align*}
\rho_{2}(\lambda_{K})&=\rho_{1}(\lambda_{C})^{q}=\delta^{q}M_{0}^{-rq^{2}}\begin{pmatrix}1&-rq^{2}\\0&1\end{pmatrix}&\rho_{2}(\mu_{K})&=M_{0}\begin{pmatrix}1&1\\0&1\end{pmatrix}\\
\rho_{2}(\mu_{C})&=\rho_{1}(\mu_{K})^{q}=M_{0}^{q}\begin{pmatrix}1&q\\0&1\end{pmatrix}&\rho_{2}(\beta)&=\rho_{1}(\lambda_{C})=\delta M_{0}^{-rq}\begin{pmatrix}1&-rq\\0&1\end{pmatrix}.
\end{align*}
Lastly, we consider $M_{0}\neq\pm1$ with $L_{0}=\pm1$; if $L_{0}=1$, then we may take the abelian representation $\rho_{K}\in R(\mathcal{M}_{K})$ such that $\rho_{K}(\lambda_{K})=I$ and $\rho_{K}(\mu_{K})=\begin{pmatrix}M_{0}&0\\0&M_{0}^{-1}\end{pmatrix}$.
However, if $L_{0}=-1$ and $M_{0}^{q}=\pm1$, then notice $(-1,M_{0})\in\mathbb{V}(F_{(p,q)})\cap(\mathbb{C}^{\ast})^{2}$.

For $M_{0}=\zeta\neq\pm1$ such that $\zeta^{q}=1$, notice that $q=2$ contradicts that $\zeta\neq\pm1$, hence $q>2$; in particular we have $F_{(p,q)}\doteq(LM^{pq}-1)(LM^{pq}+1)$.
Furthermore, $(-1)(\zeta)^{rq^{2}}-\delta^{q}=0$ implies that $\delta^{q}=-1$ and therefore $\delta=-1$ and $q$ is odd.
Notice that $(-1)(\zeta)^{pq}+1=0$, and hence $(-1,\zeta)\in\mathbb{V}(LM^{pq}+1)\cap(\mathbb{C}^{\ast})^{2}$.

For $M_{0}=\eta\neq\pm1$ such that $\eta^{q}=-1$, then $(-1)(\eta)^{rq^{2}}-\delta^{q}=0$ implies $\delta^{q}=(-1)(-1)^{rq}$, and so $q$ is odd.
Therefore, $\delta=(-1)^{r+1}$ and $q>2$, and so $F_{(p,q)}\doteq(LM^{pq}-1)(LM^{pq}+1)$ as before.
Since $p=\pm1$ is also odd, $(-1)(\eta)^{pq}-1=0$, and so $(-1,\eta)\in\mathbb{V}(LM^{pq}-1)\cap(\mathbb{C}^{\ast})^{2}$, and therefore, $(-1,\eta)\in\mathbb{V}(F_{(p,q)})\cap(\mathbb{C}^{\ast})^{2}$.
Hence, for every $(-1,M_{0})\in\mathbb{V}(LM^{rq^{2}}-\delta^{q})\cap(\mathbb{C}^{\ast})^{2}$ with $M_{0}^{q}=\pm1$ for $M_{0}\neq\pm1$, $(-1,M_{0})\in\mathbb{V}(F_{(p,q)})\cap(\mathbb{C}^{\ast})^{2}$.

However, $F_{(p,q)}|A_{K}$ has no gaps, so every point $(L_{0},M_{0})\in\mathbb{V}(F_{(p,q)})\cap(\mathbb{C}^{\ast})^{2}$ is realized by a representation $\rho_{K}\in R(\mathcal{M}_{K})$ such that $\rho_{K}(\mu_{K})\neq\pm I$ and $\rho_{K}(\lambda_{K}\mu_{K}^{pq})=\pm I$.
We see that such a representation will also satisfy $\rho_{K}(\lambda_{K}\mu_{K}^{rq^{2}})=\delta^{q}I$.
If $(-1,\zeta)\in\mathbb{V}(LM^{rq^{2}}-\delta^{q})$ as before, we find
$$
\rho_{K}(\lambda_{K}\mu_{K}^{rq^{2}})=\rho_{K}(\lambda_{K})\rho_{K}(\mu_{K})^{rq^{2}}=(-I)(I)^{rq}=-I=\delta^{q}I.
$$
Similarly, if $(-1,\eta)\in\mathbb{V}(LM^{rq^{2}}-\delta^{q})$ as before, we find that $r$ even implies $\delta=-1$ and $r$ odd implies $\delta=1$, so
$$
\rho_{K}(\lambda_{K}\mu_{K}^{rq^{2}})=\begin{cases}
(-I)(-I)^{rq}=-I=\delta^{q}I&:\hspace{4pt}r\text{ is even,}\\
(-I)(-I)^{rq}=I=\delta^{q}I&:\hspace{4pt}r\text{ is odd.}
\end{cases}
$$
Hence, every $(-1,\zeta)$ and $(-1,\eta)$ in $\mathbb{V}(LM^{rq^{2}}-\delta^{q})\cap(\mathbb{C}^{\ast})^{2}$ is realized by some representation $\rho_{K}\in R(\mathcal{M}_{K})$ with $\rho_{K}(\mu_{K})\neq\pm I$ such that $\rho_{K}(\lambda_{K}\mu_{K}^{rq^{2}})=\delta^{q}I$.
Since $g_{0}=LM^{rq^{2}}-\delta^{q}$ is a generic factor of $A_{K}$, $A_{K}$ has no gaps.
\end{proof}

Simple induction on $(p,q)$-cables and connected sums of torus knots mean that by Corollary~\ref{torusnogaps} and Lemmas~\ref{sumnogaps} and~\ref{cablenogaps}, we have the following theorem:
\begin{theorem}
\label{g0nogaps}
For every graph knot $K\in\mathcal{G}_{0}$, $A_{K}$ has no gaps.
\end{theorem}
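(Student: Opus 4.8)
The plan is to run a structural induction along the recursive description $\mathcal{G}_0 = \langle U \mid [(p,q),-],\,\#\rangle$ from Section~\ref{knotfam}; essentially all of the representation-theoretic work has already been carried out in Corollary~\ref{torusnogaps} and Lemmas~\ref{sumnogaps} and~\ref{cablenogaps}, so what remains is to assemble them. Concretely, I would attach to each $K\in\mathcal{G}_0$ a complexity $n(K)$, the minimal number of $(p,q)$-cabling and connected-sum operations needed to build $K$ from copies of the unknot; this is a well-founded measure on $\mathcal{G}_0$, and I would induct on it.

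For the base case $n(K)=0$ we have $K=U$, with $A_U\doteq L-1$. Its only balanced-irreducible factor has slope $r=0$ and $\delta=1$, and because $\lambda_U$ is trivial in $\pi_1(\mathcal{M}_U)\cong\mathbb{Z}$, every $(1,M_0)\in\mathbb{V}(L-1)\cap(\mathbb{C}^\ast)^2$ is realized by a representation $\rho_0$ with $\rho_0(\mu_U)\neq\pm I$ (diagonal with eigenvalue $M_0$ when $M_0\neq\pm1$, non-central parabolic when $M_0=\pm1$), for which automatically $\rho_0(\lambda_U)=I$; hence $A_U$ has no gaps. I would also record the torus-knot case, namely that $A_{T(m,n)}$ has no gaps, which is exactly Corollary~\ref{torusnogaps} (there the point is that torus knots are small, so the Culler--Shalen ideal-point argument of Lemma~\ref{smallnogap} together with the Alexander-polynomial obstruction of Lemma~\ref{irrednogap} applies).

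For the inductive step, suppose $n(K)\geq1$ and that $A_{K'}$ has no gaps for every $K'\in\mathcal{G}_0$ with $n(K')<n(K)$. Fix a minimal construction of $K$ and inspect its last operation. If $K=K_1\#K_2$, then $K_1,K_2\in\mathcal{G}_0$ with $n(K_1),n(K_2)<n(K)$, so by the inductive hypothesis $A_{K_1}$ and $A_{K_2}$ have no gaps, and Lemma~\ref{sumnogaps} gives that $A_{K_1\#K_2}$ has no gaps. If $K=[(p,q),C]$ with $C\in\mathcal{G}_0$, then either $|p|=1$, in which case $[(p,q),C]$ is isotopic to $C$ or to $U$ and is already covered, or $|p|>1$ and $n(C)<n(K)$, so $A_C$ has no gaps by the inductive hypothesis and Lemma~\ref{cablenogaps} gives that $A_{[(p,q),C]}$ has no gaps. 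In every case $A_K$ has no gaps, which closes the induction.

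The reason little is left to do is that all of the genuinely hard content lives in the three cited results: Corollary~\ref{torusnogaps}, Lemma~\ref{sumnogaps} (amalgamating boundary-compatible representations across the connecting annulus of a connected sum), and the delicate Lemma~\ref{cablenogaps} (for $|p|>1$ one pulls back along the cable-space quotient onto a torus knot, while for $|p|=1$ one needs Kronheimer--Mrowka's existence of irreducible $SU(2)$-representations of Dehn surgeries on $C$). I therefore expect the main thing to get right here to be the bookkeeping rather than any new computation: fixing a well-founded complexity on $\mathcal{G}_0$ despite the non-uniqueness of a knot's expression as iterated cables and connected sums, verifying that the degenerate cablings ($|p|=1$, and cabling over $U$) do not spoil the strict decrease, and checking the base case at the unknot.
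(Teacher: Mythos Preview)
Your approach is exactly the paper's: the entire proof there is the one sentence ``simple induction on $(p,q)$-cables and connected sums of torus knots'' citing Corollary~\ref{torusnogaps} and Lemmas~\ref{sumnogaps} and~\ref{cablenogaps}, and your structural induction on a minimal-operation complexity $n(K)$ is a clean way to make that precise.

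One factual slip to fix: your handling of the case $|p|=1$ in the cabling step is wrong. For a nontrivial companion $C$ and $q\ge2$, the cable $[(\pm1,q),C]$ is \emph{not} isotopic to $C$ or to $U$; it is a genuine satellite knot (the pattern winds $q$ times longitudinally, hence is not isotopic to the core of $V$). Fortunately this case split is unnecessary: Lemma~\ref{cablenogaps} already treats all $(p,q)$ with $q\ge2$, including $|p|=1$ (indeed that is precisely where the Kronheimer--Mrowka input is used in its proof), and $n(C)<n(K)$ holds regardless of $|p|$ once a cabling is the last step of a minimal construction. So simply drop the $|p|=1$ branch and invoke Lemma~\ref{cablenogaps} uniformly.
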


By this theorem, we will be able to extend each representation $\rho_{2}^{\varepsilon}$ from the earlier discussion to a representation $\rho_{K}=\rho_{1}\ast\rho_{2}^{\varepsilon}$.
To do this, we require the following lemmas about the image of the projection map $\xi$ which considers three types of representations in $R_{U}(\mathcal{M}_{\mathrm{Sat}(P,C,f)})=R_{0}\cup R_{1}\cup R_{2}$, following the notation of Ruppe~\cite{ruppe_2016}:
\begin{align}
R_{0}&=\{\rho=\rho_{1}\ast\rho_{2}|\rho_{2}\text{ reducible}\}\\
R_{1}&=\{\rho=\rho_{1}\ast\rho_{2}|\rho_{2}\text{ irreducible and }\rho_{1}\text{ reducible}\}\\
R_{2}&=\{\rho=\rho_{1}\ast\rho_{2}|\rho_{2}\text{ irreducible and }\rho_{1}\text{ irreducible}\}.
\end{align}
Recall that our satellite space $\mathcal{M}_{2}$ has $\partial_{x}\mathcal{M}_{2}$ a torus with preferred framing $(\lambda_{x},x)=(\lambda_{K},\mu_{K})$ and $\partial_{y}\mathcal{M}_{2}$ a torus with preferred framing $(\lambda_{y},y)=(\mu_{C},\lambda_{C})$, following the gluing relation.

\begin{lemma}
\label{reps0}
Let $K=\mathrm{Sat}(P,C,f)$ be a winding number zero satellite where $\mathcal{M}_{K}=\mathcal{M}_{1}\cup_{\partial N(\ell_{y})}\mathcal{M}_{2}$ with $\mathcal{M}_{1}=\mathcal{M}_{C}$ and $\mathcal{M}_{2}=V-\overset{\circ}{N}(f(P))$, then
$$\overline{\xi(R_{0})}=\mathbb{V}(L-1).$$
\end{lemma}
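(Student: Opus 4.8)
The plan is to prove the two inclusions $\xi(R_0)\subseteq\mathbb{V}(L-1)$ and $\mathbb{V}(L-1)\subseteq\overline{\xi(R_0)}$ separately. The first inclusion is the substantive half; the second is a short construction, and since $\mathbb{V}(L-1)$ is an irreducible affine line it is enough there to exhibit an infinite subset of it lying in $\xi(R_0)$. Throughout I would use that $\mathcal{M}_2=V-\overset{\circ}{N}(f(P))$ is the exterior of the two-component link $\ell_x\cup\ell_y$, so $H_1(\mathcal{M}_2;\mathbb{Z})$ is free abelian with basis $[\mu_x],[\mu_y]$, and that the winding number being zero gives $[\lambda_x]=[\lambda_y]=0$ in $H_1(\mathcal{M}_2;\mathbb{Z})$ (the identity $[\lambda_x]=w[\mu_y]$ is recorded in the discussion preceding the lemma).

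For $\xi(R_0)\subseteq\mathbb{V}(L-1)$, I would take an arbitrary $\rho=\rho_1\ast\rho_2\in R_0$ with $\xi(\rho)=(L,M)$ and recall that the peripheral subgroup $\langle\mu_K,\lambda_K\rangle$ of $\pi_1(\mathcal{M}_K)$ lies inside $\pi_1(\mathcal{M}_2)$, with $\mu_K=x$ and $\lambda_K=\lambda_x$, so $\rho$ agrees with $\rho_2$ on it. Since $\rho_2$ is reducible it preserves a line, hence $\rho_2$ is conjugate to an upper-triangular representation with diagonal character $\alpha\colon\pi_1(\mathcal{M}_2)\to\mathbb{C}^\ast$; as $\alpha$ is abelian it factors through $H_1(\mathcal{M}_2;\mathbb{Z})$, so $\alpha(\lambda_x)=1$ because $[\lambda_x]=0$. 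Therefore $\rho_2(\lambda_x)=\rho(\lambda_K)$ has both eigenvalues equal to $1$, so the $L$-coordinate read off by $\xi$ (after conjugating $\rho$ to have upper-triangular peripheral image, which the reducibility of $\rho_2$ makes possible) equals $1$; thus $\xi(\rho)\in\mathbb{V}(L-1)$.

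For $\mathbb{V}(L-1)\subseteq\overline{\xi(R_0)}$, I would show that every $(1,M)$ with $M\in\mathbb{C}^\ast$ lies in $\xi(R_0)$. Let $\rho_2\in R(\mathcal{M}_2)$ be the abelian representation determined by $\rho_2(\mu_x)=\begin{pmatrix}M&0\\0&M^{-1}\end{pmatrix}$ and $\rho_2(\mu_y)=I$; this is well defined since $[\mu_x],[\mu_y]$ is a basis of $H_1(\mathcal{M}_2;\mathbb{Z})$, and $[\lambda_x]=[\lambda_y]=0$ forces $\rho_2(\lambda_x)=\rho_2(\lambda_y)=I$. Pairing $\rho_2$ with the trivial representation $\rho_1\in R(\mathcal{M}_C)$, the gluing relations $\rho_1(\mu_C)=\rho_2(\lambda_y)=I$ and $\rho_1(\lambda_C)=\rho_2(\mu_y)=I$ hold, so $\rho=\rho_1\ast\rho_2$ is a well-defined element of $R_U(\mathcal{M}_K)$ with $\rho_2$ reducible, hence $\rho\in R_0$; it is already diagonal on $\langle\mu_K,\lambda_K\rangle$ with $\xi(\rho)=(1,M)$. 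Thus $\{1\}\times\mathbb{C}^\ast\subseteq\xi(R_0)$, and as this is an infinite subset of the irreducible curve $\mathbb{V}(L-1)$, its Zariski closure is all of $\mathbb{V}(L-1)$. Combining the two inclusions yields $\overline{\xi(R_0)}=\mathbb{V}(L-1)$.

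The main (though modest) obstacle is the first inclusion: one must be careful that $\xi$ reads off the $L$-coordinate only after a representative with upper-triangular peripheral image is chosen, and argue that the reducibility of $\rho_2$ provides such a representative and that the value $L=1$ is forced independently of the choice — this is where the winding-number-zero identity $[\lambda_x]=0$ in $H_1(\mathcal{M}_2;\mathbb{Z})$ does the real work. The reverse inclusion is routine once the homology of the link exterior $\mathcal{M}_2$ is recorded.
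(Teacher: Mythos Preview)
Your proof is correct and follows essentially the same approach as the paper's: both argue that reducibility of $\rho_2$ forces the character of $\lambda_x$ to be that of an abelian representation, and since $\lambda_x$ is null-homologous in $\mathcal{M}_2$ (the winding-number-zero condition) this forces $L=1$; both then exhibit the reverse inclusion by pairing the diagonal abelian $\rho_2$ sending $x\mapsto\mathrm{diag}(M,M^{-1})$, $y\mapsto I$ with the trivial representation on $\mathcal{M}_1$. Your write-up is in fact more careful than the paper's on two points: you explicitly verify the gluing relations for the constructed representation, and you spell out why the $L$-coordinate read by $\xi$ is unambiguously $1$ via the diagonal character factoring through $H_1(\mathcal{M}_2;\mathbb{Z})$.
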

\begin{proof}
Let $\rho_{1}\ast\rho_{2}\in R_{0}$, then up to conjugation, let $\rho_{2}$ be upper-triangular on $\pi_{1}(\mathcal{M}_{2})$, and since $\rho_{2}$ must have the same character as an abelian representation, we see that $\mathrm{tr}\rho_{2}(\lambda_{x})=2$ since $\lambda_{x}$ is null-homologous in $\mathcal{M}_{2}$.
Thus $L=1$ and so $\xi(\rho_{1}\ast\rho_{2})\in\mathbb{V}(L-1)$.
Considering all abelian representations with $\rho_{2}(x)=\begin{pmatrix}M&0\\0&M^{-1}\end{pmatrix}$ and $\rho_{2}(y)=I$, we find that $\rho_{2}$ extends to the trivial representation $\mathrm{id}_{1}(\pi_{1}(M_{1}))=\{I\}$.
Therefore, $\mathrm{id}_{1}\ast\rho_{2}\in R_{0}$ and $\xi(\mathrm{id}_{1}\ast\rho_{2})=(1,M)$ for all $M\in\mathbb{C}^{\ast}$.
Hence, $\overline{\xi(R_{0})}=\mathbb{V}(L-1)$.
\end{proof}
\begin{lemma}
\label{reps01}
Let $K=\mathrm{Sat}(P,C,f)$ be a winding number zero satellite where $\mathcal{M}_{K}=\mathcal{M}_{1}\cup_{\partial N(\ell_{y})}\mathcal{M}_{2}$ with $\mathcal{M}_{1}=\mathcal{M}_{C}$ and $\mathcal{M}_{2}=V-\overset{\circ}{N}(f(P))$, then
$$\overline{\xi(R_{0})}\cup\overline{\xi(R_{1})}=\mathbb{V}(A_{P}).$$
\end{lemma}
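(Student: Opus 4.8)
The plan is to establish the two inclusions $\xi(R_0)\cup\xi(R_1)\subseteq\mathbb{V}(A_P)$ and $\mathrm{im}\,\xi_P\subseteq\xi(R_0)\cup\xi(R_1)$, where $\xi_P\colon R_U(\mathcal{M}_P)\to\mathbb{C}^2$ is the eigenvalue map of the pattern knot exterior, and then pass to Zariski closures, using $\overline{\mathrm{im}\,\xi_P}=\mathbb{V}(A_P)$ from the definition of the $A$-polynomial together with $(L-1)\mid A_P$ (Remark~\ref{trivfactor}) to squeeze $\mathbb{V}(A_P)=\overline{\mathrm{im}\,\xi_P}\subseteq\overline{\xi(R_0)}\cup\overline{\xi(R_1)}\subseteq\mathbb{V}(A_P)$. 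The geometric fact underlying both inclusions is that, because $f(P)=\ell_x$ has winding number zero in $V$, its Seifert surface in $V$ survives refilling $\partial_y\mathcal{M}_2$ along $\mu_y$ and survives the untwisted gluing into $N(C)$; thus $\mathcal{M}_{f(P)_0}=\mathcal{M}_P$ with induced surjection $Q_{0\ast}\colon\pi_1(\mathcal{M}_2)\to\pi_1(\mathcal{M}_P)$ killing $\mu_y$, and the curves $\lambda_x=\lambda_P=\lambda_K$, $\mu_x=\mu_P=\mu_K$ identify the $(L,M)$-coordinates on all three exteriors.

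For the first inclusion, $\xi(R_0)=\mathbb{V}(L-1)\subseteq\mathbb{V}(A_P)$ is Lemma~\ref{reps0} together with Remark~\ref{trivfactor}. For $R_1$, let $\rho=\rho_1\ast\rho_2$ with $\rho_1\in R(\mathcal{M}_C)$ reducible and $\rho_2$ irreducible. The key observation is that a reducible representation of a knot group is trivial on the preferred longitude: up to conjugacy $\rho_1$ is upper triangular, so $\rho_1$ maps the commutator subgroup $\pi_1(\mathcal{M}_C)'$ into the abelian group of upper unipotent matrices and hence is trivial on $\pi_1(\mathcal{M}_C)''$, while $\lambda_C$, being the boundary of a Seifert surface, is null-homologous in the infinite cyclic cover, i.e.\ $\lambda_C\in\pi_1(\mathcal{M}_C)''$; therefore $\rho_1(\lambda_C)=I$. (One may instead argue as in Lemma~\ref{irrednogap}: a reducible nonabelian $\rho_1$ cannot have $\rho_1(\mu_C)$ parabolic without forcing $\Delta_C(1)=0$, so $\rho_1(\mu_C)$ has distinct eigenvalues and the commuting unipotent $\rho_1(\lambda_C)$ must be $I$.) Since the gluing identifies $\mu_y$ with $\lambda_C$, we get $\rho_2(\mu_y)=\rho_1(\lambda_C)=I$, so $\rho_2$ factors as $\bar\rho_2\circ Q_{0\ast}$ for some $\bar\rho_2\in R(\mathcal{M}_P)$; conjugating $\rho$ into $R_U(\mathcal{M}_K)$ puts $\bar\rho_2$ in $R_U(\mathcal{M}_P)$, and since $Q_{0\ast}$ carries $\mu_x,\lambda_x$ to $\mu_P,\lambda_P$ we conclude $\xi(\rho)=\xi_P(\bar\rho_2)\in\mathbb{V}(A_P)$; in particular $\xi(R_1)$ itself, not merely its closure, lies in $\mathbb{V}(A_P)$.

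For the reverse inclusion, take any $\bar\rho_2\in R_U(\mathcal{M}_P)$ and put $\rho_2:=\bar\rho_2\circ Q_{0\ast}\in R(\mathcal{M}_2)$, which satisfies $\rho_2(\mu_y)=I$ and is irreducible exactly when $\bar\rho_2$ is, as $Q_{0\ast}$ is surjective. Let $\rho_1\colon\pi_1(\mathcal{M}_C)\to SL_2\mathbb{C}$ be the abelian representation through $H_1(\mathcal{M}_C)\cong\mathbb{Z}\langle[\mu_C]\rangle$ with $\rho_1(\mu_C):=\rho_2(\lambda_y)$; then $\rho_1(\lambda_C)=I=\rho_2(\mu_y)$ and $\rho_1(\mu_C)=\rho_2(\lambda_y)$, so $\rho_1$ and $\rho_2$ agree along the gluing torus and amalgamate to $\rho=\rho_1\ast\rho_2\in R_U(\mathcal{M}_K)$. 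Because $\rho_1$ is reducible, $\rho\in R_0$ if $\bar\rho_2$ is reducible and $\rho\in R_1$ if $\bar\rho_2$ is irreducible, and in both cases $\xi(\rho)=\xi_P(\bar\rho_2)$. Hence $\mathrm{im}\,\xi_P\subseteq\xi(R_0)\cup\xi(R_1)$, and taking Zariski closures finishes the proof.

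The step I expect to be the real work is the slope and framing bookkeeping compressed into the first paragraph: one must verify that $\mu_y$ is precisely the filling slope of $\mathcal{M}_2$ producing $\mathcal{M}_P=\mathcal{M}_{f(P)_0}$ and that $\lambda_x$ represents simultaneously $\lambda_P$ and the satellite longitude $\lambda_K$ — both of which fail without the winding number zero hypothesis, and both of which are exactly what lets $\xi$ transport faithfully between $\mathcal{M}_K$, $\mathcal{M}_2$, and $\mathcal{M}_P$. The remaining delicate point is the assertion that every reducible representation of $\pi_1(\mathcal{M}_C)$ kills $\lambda_C$; I would write out either the infinite-cyclic-cover argument or the Burde--de Rham alternative in full, since once that is in hand everything else in the argument is routine.
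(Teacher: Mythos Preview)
Your proposal is correct and follows essentially the same route as the paper's proof: both directions use the Dehn-filling quotient $Q_{0\ast}\colon\pi_1(\mathcal{M}_2)\twoheadrightarrow\pi_1(\mathcal{M}_P)$ to pass between representations of $\mathcal{M}_2$ with $\rho_2(\mu_y)=I$ and representations of $\mathcal{M}_P$, and both extend across the gluing torus by an abelian $\rho_1$ with $\rho_1(\mu_C)=\rho_2(\lambda_y)$. The only substantive difference is that the paper simply asserts ``since $\rho_1$ is reducible, $\rho_1(\lambda_C)=I$'' while you supply a justification via $\lambda_C\in\pi_1(\mathcal{M}_C)''$; your argument there is correct (the Seifert surface lifts to the infinite cyclic cover, so $\widetilde{\lambda}_C$ bounds and hence $\lambda_C\in\pi_1''$), and your alternative Burde--de~Rham route is exactly what the paper uses elsewhere in Lemma~\ref{irrednogap}. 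The paper additionally remarks that isolated points of $\xi_P$ lift only to isolated points, but under the paper's stated convention $\mathbb{V}(A_P)=\overline{\mathrm{im}\,\xi_P}$ this bookkeeping is absorbed by your passage to Zariski closures.
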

\begin{proof}
By Lemma~\ref{reps0} and Remark~\ref{trivfactor}, $\overline{\xi(R_{0})}=\mathbb{V}(L-1)\subset\mathbb{V}(A_{P})$.
Let $\rho_{1}\ast\rho_{2}\in R_{1}$ and up to conjugation, let $\rho_{1}$ be lower-triangular (since $\rho_{1}$ is reducible).
Since $\rho_{1}\in R(\mathcal{M}_{1})$ is reducible, $\rho_{1}(\lambda_{C})=I$ and thus $\rho_{2}(y)=I$ by the gluing relation.
Hence, we let $\mathcal{M}_{P}=V(1/0)-\overset{\circ}{N}(f(P))$ be the quotient of $\mathcal{M}_{2}$ by $(1/0)$-Dehn filling along $\partial_{y}\mathcal{M}_{2}$.
The quotient map $Q_{0}:\mathcal{M}_{2}\to\mathcal{M}_{P}$ induces an epimorphism $Q_{0\ast}:\pi_{1}(\mathcal{M}_{2})\to\pi_{1}(\mathcal{M}_{P})$ satisfying $Q_{0\ast}(y)=e$.
Since $Q_{0\ast}$ is surjective, $\rho_{2}$ factors through the quotient; that is, there is an irreducible representation $\sigma\in R(\mathcal{M}_{P})$ such that $\rho_{2}=\sigma\circ Q_{0\ast}$ and $\sigma(Q_{0\ast}(y))=I$.
Hence, $\xi(\rho_{1}\ast\rho_{2})=\xi(\sigma)$ since $Q_{0\ast}(x)=\mu_{P}$ and $Q_{0\ast}(\lambda_{x})=\lambda_{P}$, and $\xi(\rho_{1}\ast\rho_{2})$ is either an isolated point or in a component $R_{f_{0}}\subset R(\mathcal{M}_{P})$.
In the latter case, we find that $\xi(\rho_{1}\ast\rho_{2})=\xi(\sigma)\in\mathbb{V}(A_{P})$, and therefore $\overline{\xi(R_{0})}\cup\overline{\xi(R_{1})}\subset\mathbb{V}(A_{P})$.
Note that the isolated points $\xi(\sigma)$ will only lift to isolated points in $\xi(R_{1})$ and so no other factors will appear.

For any $\sigma\in R(\mathcal{M}_{P})$ with $\xi(\sigma)\in\mathbb{V}(f_{0})$ for a balanced-irreducible factor $f_{0}|\widetilde{A}_{P}$, then the representation $\sigma$ will lift to some representation $\rho_{2}\in R(\mathcal{M}_{2})$ satisfying $\rho_{2}(y)=I$ from the quotient map $\rho_{2}=\sigma\circ Q_{0\ast}$.
$$
\begin{tikzcd}
\pi_{1}(\mathcal{M}_{2})\arrow[d, "Q_{0\ast}"]\arrow[dr, "\rho_{2}"]\\
\pi_{1}(\mathcal{M}_{P})\arrow[r, "\sigma"]&SL_{2}\mathbb{C}
\end{tikzcd}
$$
Conjugating $\rho_{2}$ so that $\rho_{2}$ is lower-triangular on $\partial_{y}\mathcal{M}_{2}$, we may take an abelian representation $\rho_{1}\in R(\mathcal{M}_{1})$ to send $\rho_{1}(\lambda_{C})=I$ and $\rho_{1}(\mu_{C})=\rho_{2}(\lambda_{y})$.
Therefore, we have a representation $\rho_{1}$ which agrees with $\rho_{2}$ along the gluing boundary and so $\rho_{1}\ast\rho_{2}\in R_{1}$, and up to conjugation, the representation $\rho_{1}\ast\rho_{2}$ can be made upper-triangular on $\partial_{x}\mathcal{M}_{2}$ with $\xi(\rho_{1}\ast\rho_{2})=\xi(\sigma)\in\mathbb{V}(\widetilde{A}_{P})$, which completes the proof, $\overline{\xi(R_{0})}\cup\overline{\xi(R_{1})}=\mathbb{V}(A_{P})$.
\end{proof}
\begin{lemma}
\label{reps012}
Let $K=\mathrm{Sat}(P,C,f)$ be a winding number zero satellite with companion knot $C\in\mathcal{G}_{\mathbb{Z}}$.
Let $\mathcal{M}_{K}=\mathcal{M}_{1}\cup_{\partial N(\ell_{y})}\mathcal{M}_{2}$ with $\mathcal{M}_{1}=\mathcal{M}_{C}$ and $\mathcal{M}_{2}=V-\overset{\circ}{N}(f(P))$, and let $f(P)_{r}$ be the knot whose exterior is given by $V(1/r)-\overset{\circ}{N}(f(P))$, then
$$
\overline{\xi(R_{0})}\cup\overline{\xi(R_{1})}\cup\overline{\xi(R_{2})}\subset\mathbb{V}\left(\mathrm{Red}\left[(L-1)\underset{r\in\mathcal{DS}_{C}}{\prod}\widetilde{A}_{f(P)_{r}}\right]\right).
$$
\end{lemma}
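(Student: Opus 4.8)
The plan is to split over the decomposition $R_U(\mathcal{M}_K)=R_0\cup R_1\cup R_2$ introduced above and to bound $\xi$ on each piece. For $R_0\cup R_1$ nothing new is needed: Lemma~\ref{reps01} gives $\overline{\xi(R_0)}\cup\overline{\xi(R_1)}=\mathbb{V}(A_P)$, and since $(L-1)=(LM^{0}-1)$ divides $A_C$ by Remark~\ref{trivfactor} we have $0\in\mathcal{DS}_C$ with $f(P)_0$ equal to the pattern knot, so $A_P=(L-1)\widetilde{A}_{f(P)_0}$ is precisely the $r=0$ term of the displayed product; this piece therefore already lies inside the right-hand variety, and only $\overline{\xi(R_2)}$ remains.

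For $R_2$ I would argue one irreducible component $Z\subset R_2$ at a time. Restricting $\rho=\rho_1\ast\rho_2\in Z$ to $\pi_1(\mathcal{M}_C)$ gives a morphism $Z\to R(\mathcal{M}_C)$ landing in the irreducible locus, so, $Z$ being irreducible, its image lies in a single component of $R(\mathcal{M}_C)$; because $C\in\mathcal{G}_{\mathbb{Z}}$ that component contributes a factor $(LM^{r}-\delta)\mid A_C$ with $r\in\mathcal{DS}_C$ and some $\delta\in\{\pm1\}$. On a Zariski-dense open subset of $Z$ the point $\xi(\rho_1)=(L_1,M_1)$ lies on $\mathbb{V}(LM^{r}-\delta)$ with $M_1\neq\pm1$ (this curve meets $\{M=\pm1\}$ in only finitely many points, so the parabolic peripheral locus is proper closed), and then $\rho_1(\mu_C)$ together with the commuting matrix $\rho_1(\lambda_C)$ is diagonalizable; since both diagonal entries of $\rho_1(\lambda_C)\rho_1(\mu_C)^{r}$ equal $L_1M_1^{r}=\delta$, this forces $\rho_1(\lambda_C\mu_C^{r})=\delta I$, which under the gluing identification $\mu_C=\lambda_y$, $\lambda_C=\mu_y$ reads $\rho_2(\lambda_y^{r}\mu_y)=\delta I$.

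The next step is to descend $\rho_2$ to the quotient knot. Twisting by the abelian representation $\varepsilon$ with $\varepsilon(\mu_x)=I$ and $\varepsilon(\mu_y)=\delta I$, and using that winding number zero gives $[\lambda_x]=[\lambda_y]=0$ in $H_1(\mathcal{M}_2;\mathbb{Z})$ so that $\varepsilon(\lambda_x)=\varepsilon(\lambda_y)=I$, we obtain a representation $\rho_2^{\varepsilon}$ with $\xi(\rho_2^{\varepsilon})=\xi(\rho_2)=\xi(\rho)$ and $\rho_2^{\varepsilon}(\lambda_y^{r}\mu_y)=I$. Hence $\rho_2^{\varepsilon}$ factors through the epimorphism $Q_{r\ast}\colon\pi_1(\mathcal{M}_2)\to\pi_1(\mathcal{M}_{f(P)_r})$ as $\rho_2^{\varepsilon}=\sigma\circ Q_{r\ast}$ with $\sigma$ irreducible (it has the same image as $\rho_2$), and since $Q_{r\ast}$ sends the framing $(\lambda_x,\mu_x)$ to the preferred framing of $\mathcal{M}_{f(P)_r}$ we get $\xi(\rho)=\xi(\sigma)\in\mathbb{V}(A_{f(P)_r})$. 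As this holds on a dense subset of $Z$ and $\mathbb{V}(A_{f(P)_r})$ is closed, $\overline{\xi(Z)}\subset\mathbb{V}(A_{f(P)_r})\subset\mathbb{V}\!\left(\mathrm{Red}\left[(L-1)\prod_{r'\in\mathcal{DS}_C}\widetilde{A}_{f(P)_{r'}}\right]\right)$; taking the union over the finitely many components $Z$ and combining with the $R_0\cup R_1$ bound gives the assertion, the operation $\mathrm{Red}[\,\cdot\,]$ not affecting zero sets.

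The step I expect to fight with is this diagonalizability reduction. If $\overline{r_1(Z)}$ is the whole component of $R(\mathcal{M}_C)$ it is harmless, since $\mathbb{V}(LM^{r}-\delta)$ is an irreducible curve not inside $\{M=\pm1\}$; the real worry is a component $Z$ whose image $\overline{r_1(Z)}$ is a proper subvariety of that component sitting entirely over the parabolic locus, where $\rho_1(\lambda_C\mu_C^{r})$ may fail to be central (a ``gap'' of $A_C$), so the descent does not apply verbatim. I would dispatch such $Z$ by a dimension count on the image: $\overline{\xi(Z)}\subset\overline{\mathrm{im}\,\xi}=\mathbb{V}(A_K)$, a pure one-dimensional variety, so $\overline{\xi(Z)}$ is a finite set of points, each lying on an irreducible plane-curve component $\mathbb{V}(f)$ with $f\mid A_K$; every such $\mathbb{V}(f)$ is covered either by $\mathbb{V}(A_P)=\overline{\xi(R_0)}\cup\overline{\xi(R_1)}$ or, being infinite off that union, by the one-dimensional $\xi$-image of some other component of $R_2$ to which the generic argument applies, so in either case $f$ divides $(L-1)\prod_{r}\widetilde{A}_{f(P)_r}$. (The vanishing of $[\lambda_x]$ and $[\lambda_y]$ used in the abelian twist is exactly where the winding-number-zero hypothesis is needed; for nonzero winding number this step collapses, consistently with Corollary~\ref{m0nohypsat}.)
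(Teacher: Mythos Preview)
Your overall architecture is the paper's: handle $R_0\cup R_1$ via Lemma~\ref{reps01}, and for $R_2$ use the gluing identity to get $\rho_2(\lambda_y^{r}\mu_y)=\delta I$, twist by the abelian $\varepsilon$ to kill the sign, and factor $\rho_2^{\varepsilon}$ through $Q_{r\ast}$ to land in $\mathbb{V}(A_{f(P)_r})$. That part is fine and matches the paper closely; you are in fact more explicit than the paper about \emph{why} $\rho_1(\lambda_C\mu_C^{r})=\delta I$ should hold (the paper simply asserts it ``since $C\in\mathcal{G}_{\mathbb{Z}}$'').

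The gap is in your last paragraph. Your ``dimension count'' does not work as written: from $\overline{\xi(Z)}\subset\mathbb{V}(A_K)$ you cannot conclude that $\overline{\xi(Z)}$ is finite --- nothing prevents a bad component $Z$ (one with $r_1(Z)$ entirely over the parabolic locus of $Y$) from having one-dimensional $\xi$-image. And the subsequent claim that any curve factor $\mathbb{V}(f)$ of $\mathbb{V}(A_K)$ must be the $\xi$-image of some \emph{other} component ``to which the generic argument applies'' is circular: a priori $\mathbb{V}(f)$ could be covered only by bad components. The fix is cleaner than a dimension argument and avoids the case split entirely. Once $r_1(Z)$ lands in an irreducible component $Y\subset R(\mathcal{M}_C)$ contributing the factor $(LM^{r}-\delta)$, observe that for each fixed $g\in\pi_1(\mathcal{M}_C)$ the regular function
\[
\rho_1\ \longmapsto\ \mathrm{tr}\bigl(\rho_1(\lambda_C\mu_C^{r})\rho_1(g)\bigr)-\delta\,\mathrm{tr}\,\rho_1(g)
\]
vanishes on the Zariski-dense locus of $Y$ where $\rho_1(\mu_C)$ is diagonalizable (your own computation), hence vanishes identically on $Y$. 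For irreducible $\rho_1$ the matrices $\rho_1(g)$ span $M_2(\mathbb{C})$ by Burnside, so $\rho_1(\lambda_C\mu_C^{r})=\delta I$ for \emph{every} irreducible $\rho_1\in Y$, parabolic or not. With this in hand the descent through $Q_{r\ast}$ goes through on all of $Z$, and your worried case never arises. (There remains the side issue, glossed over in the paper as well, of components $Y$ whose $\xi_C$-image is zero-dimensional and hence do not contribute any factor of $A_C$; these produce only isolated points downstream and do not affect the curve inclusion.)
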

\begin{proof}
By Lemma~\ref{reps01}, we know that $\overline{\xi(R_{0})}\cup\overline{\xi(R_{1})}=\mathbb{V}(A_{P})=\mathbb{V}\left((L-1)\widetilde{A}_{f(P)_{0}}\right)$ by definition of $f(P)_{0}$, so these factors will appear in the variety on the right.

Let $\rho_{1}\ast\rho_{2}\in R_{2}$, then since $C\in\mathcal{G}_{\mathbb{Z}}$ we may assume that $\rho_{1}\in R^{\ast}(\mathcal{M}_{1})$ satisfies $\rho_{1}(\lambda_{C}\mu_{C}^{r})=\delta I$ for some slope $r\in\mathcal{DS}_{C}$ and $\delta\in\{\pm1\}$.
Then $\rho_{2}(y\lambda_{y}^{r})=\delta I$ by the gluing relation and up to conjugation we may take $\rho_{2}(y)$ to be lower-triangular,
\begin{align*}
\rho_{2}(y)&=\begin{pmatrix}
u&0\\t&u^{-1}
\end{pmatrix}&
\rho_{2}(\lambda_{y})&=\begin{pmatrix}
v&0\\s&v^{-1}
\end{pmatrix},
\end{align*}
where $\rho_{2}(y\lambda_{y}^{r})=\delta I$ by the gluing relation.
The quotient $Q_{r}:\mathcal{M}_{2}\to\mathcal{M}_{f(P)_{r}}$ by $(1/r)$-Dehn filling along $\partial_{y}\mathcal{M}_{2}$ induces the map $Q_{r\ast}:\pi_{1}(\mathcal{M}_{2})\to\pi_{1}(\mathcal{M}_{f(P)_{r}})$ which satisfies $Q_{r\ast}(y{\lambda_{y}}^{r})=e$ and so letting $\varepsilon_{2}:\pi_{1}(\mathcal{M}_{2})\to\{\pm I\}$ be the abelian representation given by $\varepsilon_{2}(y)=\delta I$ and $\varepsilon_{2}(x)=I$, we find that
$\rho^{\varepsilon_{2}}_{2}=\varepsilon_{2}\cdot\rho_{2}$ is a representation of $\pi_{1}(\mathcal{M}_{2})$ satisfying
$$
\rho^{\varepsilon_{2}}_{2}(y\lambda_{y}^{r})=\varepsilon_{2}(y\lambda_{y}^{r})\cdot\rho_{2}(y{\lambda_{y}}^{r})=\delta I\cdot \delta I=I.
$$

Since $Q_{r\ast}$ is surjective (as in the proof of Lemma~\ref{reps01}), there is some irreducible $\sigma\in R(\mathcal{M}_{f(P)_{r}})$ such that $\rho^{\varepsilon_{2}}_{2}=\sigma\circ Q_{r\ast}$.
$$
\begin{tikzcd}
\pi_{1}(\mathcal{M}_{2})\arrow[d, "Q_{r\ast}"]\arrow[dr, "\rho^{\varepsilon_{2}}_{2}"]\\
\pi_{1}(\mathcal{M}_{f(P)_{r}})\arrow[r, "\sigma"]&SL_{2}\mathbb{C}
\end{tikzcd}
$$
Therefore, $\xi(\rho_{1}\ast\rho_{2})=\xi(\sigma)$ since $Q_{r\ast}(x)=\mu_{f(P)_{r}}$ and $Q_{r\ast}(\lambda_{x})=\lambda_{f(P)_{r}}$, and each $\xi(\sigma)$ is either an isolated point or in a component contributing a factor of $\mathbb{V}(A_{f(P)_{r}})$.
As with the earlier proof, the isolated points will only lift to isolated points, but for every $\xi(\sigma)$ in a component of $\mathbb{V}(A_{f(P)_{r}})$, the lifted point $\xi(\rho_{1}\ast\rho_{2})$ will still be in a component of $\mathbb{V}(A_{f(P)_{r}})$, and therefore
$$
\overline{\xi(R_{0})}\cup\overline{\xi(R_{1})}\cup\overline{\xi(R_{2})}\subset\mathbb{V}\left(\mathrm{Red}\left[(L-1)\prod_{r\in\mathcal{DS}_{C}}\widetilde{A}_{f(P)_{r}}\right]\right).
$$
\end{proof}
As an immediate consequence of these lemmas, we can find a polynomial multiple of the $A$-polynomial of such winding number zero satellite knots where the companion knot $C\in\mathcal{G}_{\mathbb{Z}}$:
\begin{theorem}
Let $K=\mathrm{Sat}(P,C,f)$ be a winding number zero satellite knot with companion knot $C\in\mathcal{G}_{\mathbb{Z}}$, let $\mathcal{M}_{K}=\mathcal{M}_{1}\cup_{\partial N(\ell_{y})}\mathcal{M}_{2}$ where $\mathcal{M}_{1}=\mathcal{M}_{C}$ and $\mathcal{M}_{2}=V-\overset{\circ}{N}(f(P))$, and let $f(P)_{r}$ be the knot whose exterior is given by $V(1/r)-\overset{\circ}{N}(f(P))$, then
$$
A_{K}\Bigg|\mathrm{Red}\left[(L-1)\prod_{r\in\mathcal{DS}_{C}}\widetilde{A}_{f(P)_{r}}\right].
$$
\end{theorem}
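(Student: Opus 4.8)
The plan is to read the statement off directly from Lemmas~\ref{reps0}, \ref{reps01}, and \ref{reps012} together with the defining property of the $A$-polynomial, so the argument is short and the bulk of the work has already been carried out.

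First I would recall that, up to normalization, $A_K$ is the defining polynomial of the Zariski closure $\overline{\mathrm{im}\,\xi}\subseteq\mathbb{C}^2$ of the eigenvalue map $\xi\colon R_U(\mathcal{M}_K)\to\mathbb{C}^2$, and that $A_K$ has no repeated factors. Since $K=\mathrm{Sat}(P,C,f)$ is a winding number zero satellite with $\mathcal{M}_K=\mathcal{M}_1\cup_{\partial N(\ell_y)}\mathcal{M}_2$, every $\rho\in R_U(\mathcal{M}_K)$ restricts to $\rho_1=\rho|_{\pi_1(\mathcal{M}_1)}$ and $\rho_2=\rho|_{\pi_1(\mathcal{M}_2)}$ which agree along the gluing torus, so $\rho=\rho_1\ast\rho_2$; sorting by whether $\rho_1,\rho_2$ are reducible gives the partition $R_U(\mathcal{M}_K)=R_0\cup R_1\cup R_2$ set up above. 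Since the closure of a finite union is the union of the closures, this yields
$$
\mathbb{V}(A_K)=\overline{\mathrm{im}\,\xi}=\overline{\xi(R_0)}\cup\overline{\xi(R_1)}\cup\overline{\xi(R_2)}.
$$

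Next I would invoke Lemma~\ref{reps012}, whose hypothesis $C\in\mathcal{G}_{\mathbb{Z}}$ is exactly what is assumed here: it says the union on the right is contained in $\mathbb{V}\!\left(\mathrm{Red}\!\left[(L-1)\prod_{r\in\mathcal{DS}_C}\widetilde{A}_{f(P)_r}\right]\right)$. Writing $g=\mathrm{Red}\!\left[(L-1)\prod_{r\in\mathcal{DS}_C}\widetilde{A}_{f(P)_r}\right]$, this gives the inclusion $\mathbb{V}(A_K)\subseteq\mathbb{V}(g)$ of subvarieties of $\mathbb{C}^2$. The only delicate point — that $\overline{\mathrm{im}\,\xi}$ might acquire zero-dimensional pieces from isolated points $\xi(\sigma)$ — is already disposed of inside the proofs of Lemmas~\ref{reps01} and \ref{reps012}, where such points are shown to lift only to isolated points and hence to contribute no factor.

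Finally I would conclude by the Nullstellensatz over $\mathbb{C}$: the inclusion $\mathbb{V}(A_K)\subseteq\mathbb{V}(g)$ says $g$ vanishes on $\mathbb{V}(A_K)$, so $g\in I(\mathbb{V}(A_K))=\sqrt{(A_K)}$ in $\mathbb{C}[L,M]$; since $A_K$ is squarefree, $\sqrt{(A_K)}=(A_K)$, whence $A_K\mid g$, which is the asserted divisibility. I do not expect a genuine obstacle here: the only things to watch are the harmless passage between the torus $(\mathbb{C}^*)^2$ and $\mathbb{C}^2$ in the closures (innocuous, since $\mathrm{im}\,\xi\subseteq(\mathbb{C}^*)^2$ and $g$ already carries the abelian factor $L-1$), and the observation that the reverse inclusion — which would promote this to the equality of Theorem~\ref{wnzthm} — is precisely what relies on the no-gaps input of Theorem~\ref{g0nogaps} and is therefore not available at this level of generality.
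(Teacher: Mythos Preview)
Your proposal is correct and is essentially the same argument the paper has in mind: the theorem is stated there as an immediate consequence of Lemmas~\ref{reps0}, \ref{reps01}, and \ref{reps012}, and you have simply spelled out the passage from the variety containment $\mathbb{V}(A_K)\subseteq\mathbb{V}(g)$ to the divisibility $A_K\mid g$ via squarefreeness, together with the isolated-points caveat already handled in those lemmas.
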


To show that each of the factors on the right is a factor of $A_{K}$, we will utilize Theorem~\ref{g0nogaps}, that is, that the $A$-polynomial of a graph knot has no gaps.

\begin{lemma}
\label{reps012d}
Let $K=\mathrm{Sat}(P,C,f)$ be a winding number zero satellite with companion knot $C\in\mathcal{G}_{0}$.
Let $\mathcal{M}_{K}=\mathcal{M}_{1}\cup_{\partial N(\ell_{y})}\mathcal{M}_{2}$ with $\mathcal{M}_{1}=\mathcal{M}_{C}$ and $\mathcal{M}_{2}=V-\overset{\circ}{N}(f(P))$, and let $f(P)_{r}$ be the knot whose exterior is given by $V(1/r)-\overset{\circ}{N}(f(P))$, then
$$
\overline{\xi(R_{0})}\cup\overline{\xi(R_{1})}\cup\overline{\xi(R_{2})}=\mathbb{V}\left(\mathrm{Red}\left[(L-1)\underset{r\in\mathcal{DS}_{C}}{\prod}\widetilde{A}_{f(P)_{r}}\right]\right).
$$
\end{lemma}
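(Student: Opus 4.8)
The plan is to establish the reverse of the inclusion proved in Lemma~\ref{reps012} (which applies here since $\mathcal{G}_{0}\subset\mathcal{G}_{\mathbb{Z}}$); together the two inclusions give the stated equality. Decompose $\mathrm{Red}\bigl[(L-1)\prod_{r\in\mathcal{DS}_{C}}\widetilde{A}_{f(P)_{r}}\bigr]$ into irreducible factors. The factor $L-1$ contributes $\mathbb{V}(L-1)=\overline{\xi(R_{0})}$ by Lemma~\ref{reps0}. Since $0\in\mathcal{DS}_{C}$ and $\widetilde{A}_{f(P)_{0}}=\widetilde{A}_{P}$, every factor dividing $\widetilde{A}_{f(P)_{0}}$ cuts out a subvariety of $\mathbb{V}(A_{P})=\overline{\xi(R_{0})}\cup\overline{\xi(R_{1})}$ by Lemma~\ref{reps01}. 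So the only new work is: for each integer $r\in\mathcal{DS}_{C}$ with $r\neq0$ and each balanced-irreducible factor $f_{0}\mid\widetilde{A}_{f(P)_{r}}$, show $\mathbb{V}(f_{0})\subset\overline{\xi(R_{1})}\cup\overline{\xi(R_{2})}$. This is where the hypothesis $C\in\mathcal{G}_{0}$ is used, through Theorem~\ref{g0nogaps}: $A_{C}$ has no gaps.

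Fix such an $r$; since $r\in\mathcal{DS}_{C}$ and $C\in\mathcal{G}_{\mathbb{Z}}$, there is a factor $(LM^{r}-\delta)\mid A_{C}$. For a point $(L_{0},M_{0})\in\mathbb{V}(f_{0})\cap(\mathbb{C}^{\ast})^{2}$ lying outside a proper subvariety of the curve, choose an irreducible $\sigma\in R(\mathcal{M}_{f(P)_{r}})$ with $\xi(\sigma)=(L_{0},M_{0})$, set $\rho_{2}=\sigma\circ Q_{r\ast}\in R(\mathcal{M}_{2})$ (irreducible, and with $\rho_{2}(y\lambda_{y}^{r})=I$ automatically), and twist by the abelian representation $\varepsilon_{2}$ with $\varepsilon_{2}(y)=\delta I$, $\varepsilon_{2}(x)=I$; since the winding number is zero, $\varepsilon_{2}(\lambda_{x})=\varepsilon_{2}(\lambda_{y})=I$, so $\rho_{2}^{\varepsilon_{2}}=\varepsilon_{2}\cdot\rho_{2}$ is again irreducible, has the same $\xi$-data on the $x$-boundary, and satisfies $\rho_{2}^{\varepsilon_{2}}(y\lambda_{y}^{r})=\delta I$. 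Writing $M_{C},L_{C}$ for the eigenvalues of $\rho_{2}^{\varepsilon_{2}}(\lambda_{y}),\rho_{2}^{\varepsilon_{2}}(y)$, this relation forces $L_{C}M_{C}^{r}=\delta$, so $(L_{C},M_{C})\in\mathbb{V}(LM^{r}-\delta)\cap(\mathbb{C}^{\ast})^{2}\subset\mathbb{V}(A_{C})$. Since $A_{C}$ has no gaps there is $\rho_{1}\in R(\mathcal{M}_{C})$ with $\xi(\rho_{1})=(L_{C},M_{C})$, $\rho_{1}(\lambda_{C}\mu_{C}^{r})=\delta I$, and $\rho_{1}(\mu_{C})\neq\pm I$. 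Under the gluing $\mu_{C}\leftrightarrow\lambda_{y}$, $\lambda_{C}\leftrightarrow\mu_{y}$ both $\rho_{1}\vert_{\partial\mathcal{M}_{C}}$ and $\rho_{2}^{\varepsilon_{2}}\vert_{\partial_{y}\mathcal{M}_{2}}$ are $\mathbb{Z}^{2}$-representations satisfying $(\,\cdot\,)=\delta I$ on the common slope corresponding to $\lambda_{C}\mu_{C}^{r}$; away from a further proper subvariety $\rho_{2}^{\varepsilon_{2}}(\lambda_{y})$ has distinct eigenvalues, hence both peripheral representations are diagonalizable and, after conjugation, are pinned down entrywise by that slope relation and so agree. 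The amalgam $\rho_{K}=\rho_{1}\ast\rho_{2}^{\varepsilon_{2}}\in R(\mathcal{M}_{K})$ then has $\xi(\rho_{K})=(L_{0},M_{0})$ with $\rho_{2}^{\varepsilon_{2}}$ irreducible, so $(L_{0},M_{0})\in\xi(R_{1})\cup\xi(R_{2})$. Taking Zariski closures over the dense set of admissible $(L_{0},M_{0})$ gives $\mathbb{V}(f_{0})\subset\overline{\xi(R_{1})}\cup\overline{\xi(R_{2})}$; ranging over all $f_{0}$ and all $r$ finishes the argument.

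The main obstacle is the peripheral matching on the gluing torus in the degenerate cases. When $M_{C}=\pm1$ but $\rho_{2}^{\varepsilon_{2}}(\lambda_{y})$ is non-central parabolic, Lemma~\ref{irrednogap} applied to $(LM^{r}-\delta)\mid A_{C}$ with $r\neq0$ guarantees $\rho_{1}(\mu_{C})$ is also non-central parabolic; since both peripheral representations are then completely determined by that single matrix through the slope relation $(\,\cdot\,)=\delta I$, and non-central parabolics with a given eigenvalue form one conjugacy class, a conjugation still makes them agree. The genuinely dangerous case is $\rho_{2}^{\varepsilon_{2}}(\lambda_{y})=\pm I$, which would force $\rho_{1}(\mu_{C})=\pm I$ against Lemma~\ref{irrednogap}; one must check this occurs only on a proper subvariety of the component $R_{f_{0}}\subset R(\mathcal{M}_{f(P)_{r}})$, equivalently that $Q_{r\ast}(\lambda_{y})$ is not central in $\pi_{1}(\mathcal{M}_{f(P)_{r}})$. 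This is automatic when $\mathcal{M}_{f(P)_{r}}$ has trivial center, in particular when $f(P)_{r}$ is hyperbolic; it is vacuous when $f(P)_{r}$ is the unknot, since then $\widetilde{A}_{f(P)_{r}}\doteq1$; and when $f(P)_{r}$ is a torus knot it must be handled separately, either by verifying $Q_{r\ast}(\lambda_{y})$ is not a Seifert fibre or by exhibiting the finitely many affected points of $\mathbb{V}(f_{0})$ directly from the explicit torus-knot representations together with abelian companion representations. One should also record, for the closure step, that the points of $\mathbb{V}(f_{0})$ not realized by an irreducible $\sigma$, together with the above degeneration locus, form a proper subvariety of the curve, so that passing to the Zariski closure genuinely recovers all of $\mathbb{V}(f_{0})$.
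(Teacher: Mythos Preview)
Your argument follows the same architecture as the paper's proof: invoke Lemma~\ref{reps012} for one inclusion, and for the reverse lift $\sigma\in R(\mathcal{M}_{f(P)_{r}})$ through $Q_{r\ast}$, twist by $\varepsilon_{2}$, and then use Theorem~\ref{g0nogaps} to produce a matching $\rho_{1}$ on the companion side. The generic and non-central parabolic cases are handled exactly as in the paper.

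The one place you diverge is the central case $\rho_{2}^{\varepsilon_{2}}(\lambda_{y})=\pm I$. You treat this as a locus to be \emph{avoided}, arguing it is a proper subvariety of the component and then splitting on the geometric type of $f(P)_{r}$; the torus-knot subcase is left only sketched. The paper instead resolves this case \emph{directly} and uniformly: if $\rho_{2}^{\varepsilon_{2}}(\lambda_{y})=\pm I$ then, after a further twist by an abelian $\varepsilon$ so that $\rho_{2}^{\varepsilon}(y)=I$, one simply glues with the abelian representation $\varepsilon_{1}:\pi_{1}(\mathcal{M}_{C})\to\{\pm I\}$ determined by $\varepsilon_{1}(\mu_{C})=\rho_{2}^{\varepsilon}(\lambda_{y})$. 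The resulting amalgam lies in $R_{1}$, which is perfectly fine since the target is $\overline{\xi(R_{0})}\cup\overline{\xi(R_{1})}\cup\overline{\xi(R_{2})}$. This removes any need to analyse $Q_{r\ast}(\lambda_{y})$, the center of $\pi_{1}(\mathcal{M}_{f(P)_{r}})$, or the trichotomy hyperbolic/torus/unknot for $f(P)_{r}$, and it realises \emph{every} point rather than a Zariski-dense subset, so the closure step on that branch becomes unnecessary.
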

\begin{proof}
By Lemma~\ref{reps012}, it suffices to show the other direction of containment.
Notice that Lemma~\ref{reps01} implies that $\overline{\xi(R_{0})}\cup\overline{\xi(R_{1})}=\mathbb{V}((L-1)\widetilde{A}_{f(P)_{0}})$.

Therefore, let $\widetilde{A}_{f(P)_{r}}$ be a factor with corresponding $f_{0}=(LM^{r}-\delta)|\widetilde{A}_{C}$, so $r\in\mathcal{DS}_{C}$ and $\delta\in\{\pm1\}$ are given.
Let $\mathcal{M}_{f(P)_{r}}$, $Q_{r}$, and $Q_{r\ast}$ be as in the previous proof, and so $Q_{r\ast}(\lambda_{y}^{r}y)=e$.
For each balanced-irreducible factor $g_{0}|\widetilde{A}_{f(P)_{r}}$, there is a family of representations $\sigma\in R(\mathcal{M}_{f(P)_{r}})$ such that $\xi(\sigma)=(L,M)\in\mathbb{V}(g_{0})$ for all but finitely many points.
Let $\rho_{2}=\sigma\circ Q_{r\ast}$ be the lift of such a representation, as in the proof of Lemma~\ref{reps012}, then to find a representation $\rho^{\varepsilon_{2}}_{2}\in R^{\ast}(\mathcal{M}_{2})$ which agrees with some $\rho_{1}\in R^{\ast}(\mathcal{M}_{1})$ along the gluing torus, we use the same abelian representation $\varepsilon_{2}$ from the proof of Lemma~\ref{reps012}, $\varepsilon_{2}:\pi_{1}(\mathcal{M}_{2})\to\{\pm I\}$ given by $\varepsilon(y)=\delta I$ and $\varepsilon(x)=I$.
This gives $\rho^{\varepsilon_{2}}_{2}=\varepsilon_{2}\cdot\rho_{2}$ satisfying $\rho^{\varepsilon_{2}}_{2}(y\lambda_{y}^{r})=\delta I$.
Additionally, we note that $\rho^{\varepsilon_{2}}_{2}(\lambda_{y})=\rho_{2}(\lambda_{y})$ and $\rho^{\varepsilon_{2}}_{2}(\lambda_{x})=\rho_{2}(\lambda_{x})$ since $\mathcal{M}_{2}$ is a winding number zero satellite space.
We show now that every $\rho^{\varepsilon_{2}}_{2}\in R(\mathcal{M}_{2})$ from this family of representations $\sigma\in R^{\ast}(\mathcal{M}_{f(P)_{r}})$ will extend to a representation $\rho_{1}\in R(\mathcal{M}_{C})$ since $A_{C}$ has no gaps.

If $\mathrm{tr}\rho^{\varepsilon_{2}}_{2}(\lambda_{y})\neq\pm2$, then we may conjugate $\rho^{\varepsilon_{2}}_{2}$ so that $\rho^{\varepsilon_{2}}_{2}(\lambda_{y})=\begin{pmatrix}\overline{M}&0\\0&\overline{M}^{-1}\end{pmatrix}$ and thus by the quotient identity, $\rho^{\varepsilon}_{2}(y)=\delta\rho^{\varepsilon}_{2}(\lambda_{y}^{-r})=\delta\begin{pmatrix}\overline{M}^{-r}&0\\0&\overline{M}^{r}\end{pmatrix}$.
Since $C\in\mathcal{G}_{0}$ and $A_{C}$ has no gaps by Theorem~\ref{g0nogaps}, there is a representation $\rho_{1}\in R(\mathcal{M}_{1})$ such that $\rho_{1}(\mu_{C})=\begin{pmatrix}\overline{M}&0\\0&\overline{M}^{-1}\end{pmatrix}$ and $\rho_{1}(\lambda_{C})=\delta\begin{pmatrix}\overline{M}^{-r}&0\\0&\overline{M}^{r}\end{pmatrix}$.
Therefore the representation $\rho^{\varepsilon_{2}}_{2}$ extends to a representation $\rho_{K}=\rho_{1}\ast\rho^{\varepsilon_{2}}_{2}\in R(\mathcal{M}_{K})$ with $\xi(\rho_{K})=\xi(\sigma)$.

If $\mathrm{tr}\rho^{\varepsilon_{2}}_{2}(\lambda_{y})=\pm2=2\overline{M}$, then up to conjugation, either $\rho^{\varepsilon_{2}}_{2}(\lambda_{y})=\overline{M} I$ or $\rho^{\varepsilon_{2}}_{2}=\overline{M}\begin{pmatrix}1&1\\0&1\end{pmatrix}$.
In the latter case, the gluing relation implies that
\begin{align*}
\rho^{\varepsilon_{2}}_{2}(\lambda_{y})&=\overline{M}\begin{pmatrix}
1&1\\
0&1
\end{pmatrix},
&
\rho^{\varepsilon_{2}}_{2}(\mu_{y})&=\delta\overline{M}^{-r}\begin{pmatrix}
1&-r\\
0&1
\end{pmatrix}.
\end{align*}
Since $(\delta\overline{M}^{-r},\overline{M})\in\mathbb{V}(LM^{r}-\delta)\cap(\mathbb{C}^{\ast})^{2}$, there is a representation $\rho_{1}\in R(\mathcal{M}_{C})$ such that $\rho_{1}(\lambda_{C}\mu_{C}^{r})=\delta I$ and $\rho_{1}(\mu_{C})\neq\pm I$ with $\mathrm{tr}\rho_{1}(\mu_{C})=2\overline{M}$.
Therefore, the representation will extend to $\rho_{K}=\rho_{1}\ast\rho^{\varepsilon_{2}}_{2}\in R(\mathcal{M}_{K})$ such that $\xi(\rho_{K})=\xi(\sigma)$.
In the former case, $\rho^{\varepsilon_{2}}_{2}(\lambda_{y})=\pm I$, then we use some abelian representation $\varepsilon$ so that $\rho^{\varepsilon}_{2}(y)=I$, which will naturally extend to $\rho_{K}=\varepsilon_{1}\ast\rho^{\varepsilon}_{2}$ via the abelian representation $\varepsilon_{1}:\pi_{1}(\mathcal{M}_{C})\to\{\pm I\}$ given by $\varepsilon_{1}(\mu_{C})=\rho^{\varepsilon}_{2}(\lambda_{y})$.
Therefore, every representation $\sigma\in R^{\ast}(\mathcal{M}_{f(P)_{r}})$ will extend to some $\rho_{K}\in R(\mathcal{M}_{K})$ with $\xi(\rho_{K})=\xi(\sigma)$; hence each factor $\mathbb{V}(g_{0})\subset\mathbb{V}(\widetilde{A}_{K})$ and therefore the lemma is proven.
\end{proof}

We see that Theorem~\ref{wnzthm} follows from Lemmas~\ref{reps012} and~\ref{reps012d} which will be discussed in Section~\ref{proof2}.
Furthermore, Theorems~\ref{doublellin} and ~\ref{doubletwisteddouble} follow as a consequence of Theorem~\ref{wnzthm}, where we omit polynomial reduction of $A_{D_{n}(C)}$ for $C\in\mathcal{G}_{0}$ because each factor $\widetilde{A}_{f(P)_{r}}=\widetilde{A}_{K(n-r)}$ is irreducible and distinct, also discussed in Section~\ref{proof2}.

\section{$r$-Twisted Gluing Relations}
\label{rtwistedcomps}
In the special case of $r$-twisted Whitehead doubles, the subset of importance in $R(\mathcal{M}_{D_{r}(K)})$ is $\overline{\xi(R_{2})}$ given by representations $\rho=\rho_{1}\ast\rho_{2}$ where both $\rho_{1},\rho_{2}$ are irreducible representations since Remark~\ref{patternfactor} gives us the known factor $A_{K(r)}|A_{D_{r}(K)}$ for any $r$-twisted Whitehead double.

For explicit computation when both $\rho_{1},\rho_{2}$ are irreducible, we may conjugate $\rho_{2}$ so that
\begin{align*}
\rho_{2}(\mu_{x})&=\begin{pmatrix}
M&1\\
0&M^{-1}
\end{pmatrix}=\rho_{2}(x),&
\rho_{2}(\mu_{y})&=\begin{pmatrix}
u&0\\
t&u^{-1}
\end{pmatrix}=\rho_{2}(y).
\end{align*}
And since $\mu_{x},\lambda_{x}$ commute and $\mu_{y},\lambda_{y}$ commute, we have
\begin{align*}
\rho_{2}(\lambda_{x})&=\begin{pmatrix}
L&\ast\\
0&L^{-1}
\end{pmatrix},&
\rho_{2}(\lambda_{y})&=\begin{pmatrix}
v&0\\
s&v^{-1}
\end{pmatrix}\overset{\phi_{r}}{=}\rho_{1}(\mu_{C})
=\begin{pmatrix}
m&0\\
\ast&m^{-1}
\end{pmatrix}.
\end{align*}
Furthermore, since $\mu_{K},\lambda_{K}$ commute, and $\mu_{K}$ is lower-triangular by the gluing, this implies $\lambda_{K}$ is also lower triangular, and label its $(1,1)$-entry $\ell$.
Here, we use the $r$-twisted gluing relation $\phi_{r}:\partial\mathcal{M}_{K}\to\partial V$, given by
\begin{align*}
\phi_{r}(\mu_{K})&=\lambda_{y},&\phi_{r}(\lambda_{K})&=\mu_{y}\lambda_{y}^{-r}.
\end{align*}
Hence, we have $\rho_{2}(y)=\rho_{1}(\lambda_{K})$ and $\rho_{2}(\mu_{y}\lambda_{y}^{-r})=\rho_{1}(\lambda_{K})$, whose $(1,1)$-entry gives us an additional relation on $\ell$; these combined give us:
\begin{align*}
m&=v,&
\ell&=\rho_{1}(\lambda_{K})_{1,1}=\rho_{2}(y\lambda_{y}^{-r})_{1,1}=uv^{-r}.
\end{align*}
Hence, if $\rho\in R_{2}$, then the (1,1)-entries of $\rho_{1}(\mu_{K})$ and $\rho_{1}(\lambda_{K})$ must satisfy $\widetilde{A}_{K}\left(\ell,m\right)=0$, or alternatively
\begin{align}
f_{K,r}(M,t,u)&=\widetilde{A}_{K}\left(uv^{-r},v\right)=0,
\end{align}
\begin{align}
v&=\tfrac{-M t^{2}+M^{3} t^{2}-t u+2 M^{2} t u-M^{4} t u+M^{2} t^{3} u+M u^{2}+M t^{2} u^{2}-2 M^{3} t^{2} u^{2}-M^{2} t u^{3}+M^{4} t u^{3}}{M u^{2}},
\end{align}
\begin{align}
s&=\rho_{2}(\lambda_{y})_{2,1}.
\end{align}
The Whitehead relation gives us $\rho_{2}(\Omega)=\rho_{2}(\Omega^{\ast})$ which is true so long as a single polynomial equation is satisfied:
\begin{align}
f_{W}(M,t,u)=\begin{matrix}M^2 t - M^4 t - M u + M^3 u \\- M t^2 u + 2 M^3 t^2 u + t u^2 - 4 M^2 t u^2 \\+ M^4 t u^2 - M^2 t^3 u^2 + M u^3 - M^3 u^3 \\+ 2 M t^2 u^3 - M^3 t^2 u^3 - t u^4 + M^2 t u^4\end{matrix}=0.
\end{align}
Lastly, $\rho_{2}(\lambda_{x})_{1,1}=\rho_{2}(XY\Omega YX)_{1,1}$ gives us an additional polynomial equation:
\begin{align}
F_{W}(L,M,t,u)=\begin{matrix}M t - M^{3} t - t^{2} u + 2 M^{2} t^{2} u - 2 M t u^{2} + M^{3} t u^{2} \\
- M t^{3} u^{2} - M^{2} u^{3} + L M^{2} u^{3} + t^{2} u^{3} - M^{2} t^{2} u^{3} + M t u^{4}\end{matrix}=0.
\end{align}
Keeping as many of these defining equations constant as possible is the reason for the choice of the $r$-twisted gluing $\phi_{r}$ with the same Whitehead link $W$.
From this, we see that if $\overline{\xi(R_{2})}$ contributes a factor $\widetilde{P}_{K,r}$ of the $A$-polynomial, then its variety $\mathbb{V}(\widetilde{P}_{K,r})\subset\mathbb{V}(\mathrm{Res}_{u,t}(f_{K,r},f_{W},F_{W}))$.
From these three polynomials, we are able to perform resultant methods to find (by explicit computation) a polynomial which contains the $\widetilde{P}_{K,r}$ as a factor: $\mathrm{Res}_{u}\left[\mathrm{Res}_{t}[f_{K,r},f_{W}],\mathrm{Res}_{t}[f_{W},F_{W}]\right]$.
Removing isolated points and impossible factors (since $M\neq0$, $u\neq0$, $t\neq0$, etc) as well as checking against possible boundary slopes, we may eliminate incorrect factors from this ``iterated resultant.''
The connection between $A_{D_{r}(K)}$ and the $A$-polynomial of $n$-twist knots is given clearly by Theorem~\ref{doublellin}; a recursive formula for $A_{K(n)}$ was first found by Hoste and Shanahan~\cite{hs_2004}, and later an explicit formula by Mathews~\cite{mathews_2014}:
\begin{theorem}\cite{mathews_2014}
For any $n$-twist knot $K(n)$, its $A$-polynomial is given explicitly as:
\label{twistpolynomial}
$$
\textstyle
\widetilde{A}_{K(n)}=\begin{cases}
M^{2n}(L+M^{2})^{2n-1}\times\\
\hspace{10pt}\times\underset{i=0}{\overset{2n-1}{\sum}}\binom{n+\left\lfloor\frac{i-1}{2}\right\rfloor}{i}\left(\tfrac{M^{2}-1}{L+M^{2}}\right)^{i}(1-L)^{\left\lfloor\frac{i}{2}\right\rfloor}(M^{2}-\tfrac{L}{M^{2}})^{\left\lfloor\frac{i+1}{2}\right\rfloor}
&:\hspace{4pt}n\geq0\\
M^{-2n}(L+M^{2})^{-2n}\times\\
\hspace{10pt}\times\underset{i=0}{\overset{-2n}{\sum}}\binom{-n+\left\lfloor\frac{i}{2}\right\rfloor}{i}\left(\tfrac{1-M^{2}}{L+M^{2}}\right)^{i}(1-L)^{\left\lfloor\frac{i}{2}\right\rfloor}(M^{2}-\tfrac{L}{M^{2}})^{\left\lfloor\frac{i+1}{2}\right\rfloor}
&:\hspace{4pt}n\leq0.
\end{cases}
$$
\end{theorem}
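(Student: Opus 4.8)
The plan is to reduce the claimed closed form to a three-term linear recursion in $n$ and then verify it by induction; since this is the cited result of Mathews~\cite{mathews_2014}, I only indicate the structure. Recall that $K(n)=J(2,2n)$ is the two-bridge knot whose group has a presentation $\langle a,b\mid w_na=bw_n\rangle$ with $w_n$ an explicit word depending on $n$. Under the Riley parametrization $\rho(a)=\left(\begin{smallmatrix}M&1\\0&M^{-1}\end{smallmatrix}\right)$, $\rho(b)=\left(\begin{smallmatrix}M&0\\-u&M^{-1}\end{smallmatrix}\right)$, the defining relation collapses to a single Riley polynomial $\phi_n(M,u)=0$, and the preferred-longitude eigenvalue $L$ is an explicit rational function of $(M,u)$. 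Because the traces of powers occurring in $\phi_n$ are governed by the Chebyshev polynomials $S_k$ ($S_0=1$, $S_1=x$, $S_{k+1}=xS_k-S_{k-1}$), the Riley polynomial is, after the substitution, essentially $S_{n-1}$ evaluated at a composite argument; eliminating $u$ by a resultant then yields $\widetilde A_{K(n)}$ together with a recursion
\[
\widetilde A_{K(n+1)}=\alpha(L,M)\,\widetilde A_{K(n)}-\beta(L,M)\,\widetilde A_{K(n-1)},\qquad \alpha,\beta\in\mathbb Z[L^{\pm1},M^{\pm1}],
\]
which may instead be quoted (with the explicit $\alpha$ and $\beta$) from~\cite{hs_2004}.

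Given the recursion, the proof is an induction on $|n|$, carried out separately on the half-lines $n\ge0$ and $n\le0$, with the overlap at $n=0$ ensuring consistency. First I would check the seed values directly: $K(0)=U$ gives $\widetilde A\doteq1$, $K(1)=T(3,2)$ gives $\widetilde A\doteq LM^{6}+1$ (matching Remark~\ref{iteratedtorus}), and $K(-1)=4_1$ gives the standard figure-eight $A$-polynomial, supplying the two consecutive values needed to launch the recursion in each direction. For the inductive step I would factor out the prefactor $M^{2n}(L+M^{2})^{2n-1}$ and regroup the sum by the parity of the summation index $i$: writing $z=\tfrac{M^{2}-1}{L+M^{2}}$ and $w=z^{2}(1-L)\big(M^{2}-\tfrac{L}{M^{2}}\big)$, the even-$i$ part is $\sum_j\binom{n+j-1}{2j}w^{j}$ and the odd-$i$ part is $z\big(M^{2}-\tfrac{L}{M^{2}}\big)\sum_j\binom{n+j}{2j+1}w^{j}$, and a short application of Pascal's rule shows that both of these sums satisfy the common three-term recursion $X_{n+1}=(w+2)X_n-X_{n-1}$. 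Tracking how the prefactors change under $n\mapsto n+1$ then turns this Chebyshev-type recursion into precisely the recursion for $\widetilde A_{K(n)}$, closing the induction. As a byproduct, because the recursion has polynomial coefficients and polynomial seeds, every term it produces is a Laurent polynomial, which is why the $L+M^{2}$ and $M^{-2}$ denominators visible in the closed form must cancel upon expansion.

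The serious work is in the bookkeeping rather than in any single hard lemma: one must match all prefactors, signs, and the $\doteq$-normalization simultaneously, verify the Pascal identity in the two cases $i$ even and $i$ odd, reconcile the differing index ranges of consecutive sums (the terms that appear or drop out at the endpoints of $0\le i\le 2n-1$ versus $0\le i\le 2(n+1)-1$), and handle the $n\le0$ case — either by the analogous computation with $\tfrac{1-M^{2}}{L+M^{2}}$ replacing $\tfrac{M^{2}-1}{L+M^{2}}$, or via the mirror relation $A_{K^{\ast}}(L,M)\doteq A_{K}(L,M^{-1})$ of Remark~\ref{mirrored}. If instead one prefers a proof entirely from scratch, bypassing~\cite{hs_2004}, the step demanding the most care is the resultant elimination of the Riley variable $u$: it must be organized, using the factorization of $S_{n-1}$ and the product formula for resultants, so that it collapses to the single binomial sum above with the correct degree and no extraneous factors. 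Either way, multiplying by the trivial factor $(L-1)$ and fixing the overall monomial and sign then produces the stated formula for $A_{K(n)}$.
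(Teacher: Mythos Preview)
The paper does not prove this theorem at all: it is quoted verbatim from Mathews~\cite{mathews_2014} as a cited result, with no argument given (and similarly for the Hoste--Shanahan recursion in the theorem that follows). So there is no ``paper's own proof'' to compare against.

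Your sketch is a reasonable outline of how the result is actually established in the literature: one uses the Hoste--Shanahan three-term recursion from~\cite{hs_2004} as the engine, checks the base cases, and then verifies that the closed binomial sum satisfies the same recursion via a Pascal-type identity, handling the two half-lines $n\ge0$ and $n\le0$ separately (this is essentially Mathews' own strategy). The only caveat is that your write-up is explicitly a plan rather than a proof --- phrases like ``I would check'' and ``the serious work is in the bookkeeping'' signal that the Pascal/prefactor verification has not actually been carried out here --- so if this were meant to stand on its own you would still owe the reader that computation. But as a summary of the intended argument for a cited result, it is accurate and there is no genuine gap in the approach.
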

\begin{theorem}\cite{hs_2004}
\label{twistpolynomialrec}
For any $n$-twist knot $K(n)$, its $A$-polynomial is given recursively as:
$$
\widetilde{A}_{K(n)}=\begin{cases}
x\widetilde{A}_{K\left(n-\tfrac{n}{|n|}\right)}-y\widetilde{A}_{K\left(n-\tfrac{2n}{|n|}\right)}&:\hspace{4pt}n\neq-1,0,1,2\\
M^{4}+L(-1+M^{2}+2M^{4}+M^{6}-M^{8})+L^{2}M^{4}&:\hspace{4pt}n=-1\\
1&:\hspace{4pt}n=0\\
L+M^{6}&:\hspace{4pt}n=1\\
M^{14}+L(M^{4}-M^{6}+2M^{10}+2M^{12}-M^{14})\\+L^{2}(-1+2M^{2}+2M^{4}-M^{8}+M^{10})+L^{3}&:\hspace{4pt}n=2
\end{cases}
$$
where
\begin{align*}
x&=L^{2}(M^{4}+1)+L(-M^{8}+2M^{6}+2M^{4}+2M^{2}-1)+M^{4}\\
&=(L+M^{2})\widetilde{A}_{K(1)}+\widetilde{A}_{K(-1)}\\
y&=M^{4}(L+M^{2})^{4}.
\end{align*}
\end{theorem}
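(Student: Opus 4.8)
The statement is due to Hoste and Shanahan~\cite{hs_2004}, and the plan is to reproduce the structure of their argument: exploit the fact that the twist knot group admits a two-bridge presentation in which the only $n$-dependent part of the relator is an $n$-th power of a fixed word, so that a Cayley-Hamilton (Chebyshev) recursion on $SL_{2}\mathbb{C}$-matrices propagates first to the Riley polynomial of the character variety and then, after eliminating the trace parameter, to $\widetilde{A}_{K(n)}$ itself.

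First I would record the two-bridge presentation $\pi_{1}(\mathcal{M}_{K(n)})\cong\langle a,b\mid av^{n}=v^{n}b\rangle$, where $a,b$ are meridians (conjugate in the group) and $v$ is a fixed commutator-type word independent of $n$. For a nonabelian representation $\rho$, conjugate so that $\rho(a)=\begin{pmatrix}M&1\\0&M^{-1}\end{pmatrix}$ and $\rho(b)=\begin{pmatrix}M&0\\-u&M^{-1}\end{pmatrix}$, with $M$ the meridian eigenvalue and $u$ the Riley parameter; then $\rho(v)$ is an explicit matrix over $\mathbb{Z}[M^{\pm1},u]$, and I set $\tau:=\mathrm{tr}\,\rho(v)$. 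The $SL_{2}$ Cayley-Hamilton identity gives $\rho(v)^{n}=S_{n-1}(\tau)\rho(v)-S_{n-2}(\tau)I$, where the $S_{j}$ are the Chebyshev polynomials with $S_{j+1}=\tau S_{j}-S_{j-1}$, $S_{0}=1$, $S_{-1}=0$, valid for all $n\in\mathbb{Z}$. Substituting into the relator equation $\rho(a)\rho(v)^{n}=\rho(v)^{n}\rho(b)$ and clearing denominators in a single matrix entry expresses the defining polynomial of the nonabelian character variety as a $\mathbb{Z}[M^{\pm1},u]$-linear combination of $S_{n-1}(\tau)$ and $S_{n-2}(\tau)$; since $S_{n-1}=\tau S_{n-2}-S_{n-3}$, this polynomial automatically satisfies the three-term recursion $\widehat{A}_{K(n)}=\tau\,\widehat{A}_{K(n-1)}-\widehat{A}_{K(n-2)}$ in $\mathbb{Z}[M^{\pm1},u]$.

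Next I would pass to eigenvalue coordinates: using the standard longitude of a two-bridge knot, $L$ is a rational function of $(M,u)$, and eliminating $u$ by a resultant yields $\widetilde{A}_{K(n)}(L,M)$ up to a unit $\pm L^{a}M^{b}$. The heart of the matter is to track how this elimination transforms the $u$-level recursion: the trace $\tau$ is carried to (a rescaling of) the displayed polynomial $x$, while the leading coefficients absorbed when clearing denominators and when forming the resultant assemble into the factor $y=M^{4}(L+M^{2})^{4}$, producing $\widetilde{A}_{K(n)}\doteq x\,\widetilde{A}_{K(n-1)}-y\,\widetilde{A}_{K(n-2)}$. I would upgrade $\doteq$ to exact equality by comparing Newton polygons (the degrees in $L$, which are $2n-1$ for $n\ge 1$, add up compatibly on both sides) together with one extreme coefficient, and finish by verifying the four base cases $n=-1,0,1,2$ directly --- the figure-eight $4_{1}$, the unknot, the trefoil $T(3,2)$, and $5_{2}$ --- whose $A$-polynomials are classical and which also fix the overall normalization.

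The main obstacle is this last passage rather than the recursion itself: the matrix Chebyshev step is essentially forced, but reconciling it with the normalization conventions of the $A$-polynomial --- identifying precisely which extraneous factors (the abelian locus $L-1$ and any ideal-point contributions) are discarded in the resultant, and pinning down the unit $\pm L^{a}M^{b}$ so that the recursion holds on the nose for every $n\in\mathbb{Z}$ --- requires careful control of leading and trailing terms throughout the elimination.
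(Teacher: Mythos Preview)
The paper does not give its own proof of this statement: it is quoted verbatim from Hoste--Shanahan~\cite{hs_2004} as a cited result, with no argument supplied. Your proposal correctly reconstructs the Hoste--Shanahan strategy (two-bridge presentation, Cayley--Hamilton/Chebyshev recursion on $\rho(v)^{n}$ at the level of the Riley polynomial, followed by resultant elimination of the Riley parameter), so there is nothing in the paper to compare against beyond the citation itself.

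One caution on your sketch: the delicate step you flag --- promoting the recursion from the Riley-variable level to an \emph{exact} identity in $\mathbb{Z}[L,M]$ --- is genuinely where Hoste--Shanahan do the work, and your outline is a bit optimistic about ``the trace $\tau$ is carried to $x$'' and ``leading coefficients assemble into $y$''. In their paper this is not a one-line bookkeeping exercise: they normalize carefully, handle $n>0$ and $n<0$ separately (note the asymmetric base cases $n=-1,0,1,2$ rather than $n=-1,0,1$), and verify that the resultant does not introduce or drop factors as $n$ varies. If you intend to actually write this out rather than cite it, you should expect to track the $M$-degree and $L$-degree on both sides explicitly and check several more base cases than the four listed, since the recursion $\widetilde{A}_{K(n)}=x\widetilde{A}_{K(n-\mathrm{sgn}(n))}-y\widetilde{A}_{K(n-2\,\mathrm{sgn}(n))}$ runs in two directions and the normalization must be consistent across the sign change.
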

Note that Hose and Shanahan's convention actually gives $\widetilde{A}_{K(n)^{\ast}}$ under the notation in this paper; to remedy this, the mirror image is found by Remark \ref{mirrored}, which will not matter for $K(-1)$.
In general, $K(n)^{\ast}=J(2,2n)^{\ast}=J(-2,-2n)=J(-2n,2)$.

Further examples of winding number zero satellites can be described with links where $V=\mathcal{M}_{\ell_{y}}$ and so $f(P)=\ell_{x}\subset V$.
The first generalization of the $r$-twisted Whitehead doubles we consider are the $(m,n)$-double twisted doubles, whose pattern knot embedding is shown here with $m$ vertical full-twists and $n$ vertical full-twists:
\begin{figure}[th]
$$
\begin{tikzpicture}
		\begin{knot}[
		line width=1.5pt,
		line join=round,
		clip width=1,
		scale=2,
		background color=white,
		consider self intersections,
		only when rendering/.style={
			draw=white,
			double=black,
			double distance=1.5pt,
			line cap=none
		}
		]
\strand
(.85,.6) to ++(-.3,0);
\strand
(-.35,.5) to ++(.3,0);
\strand
(-.2,.5) arc (0:90:.15)
to ++(-.2,0)
arc (90:180:.15)
to ++(0,-.8);
\strand
(0,0) to +(.4,0)
arc (-90:0:.3)
to +(0,1.2)
arc (0:90:.3)
to +(-.4,0)
arc (-270:-180:.15)
.. controls +(0,-.15) and +(0,.15) .. ++(-.3,-.3)
.. controls +(0,-.15) and +(0,.15) .. ++(.3,-.3)
arc (-180:-90:.15)
to +(.25,0)
arc (90:0:.15)
to +(0,-.3)
arc (0:-90:.15)
to ++(-1.1,0)
arc (-90:-180:.15)
to ++(0,.3)
arc (180:90:.15)
to ++(.25,0)
arc (-90:0:.15)
.. controls +(0,.15) and +(0,-.15) .. ++(.3,.3)
.. controls +(0,.15) and +(0,-.15) .. ++(-.3,.3)
arc (0:90:.15)
to ++(-.4,0)
arc (90:180:.3)
to ++(0,-1.2)
arc (-180:-90:.3)
to ++(1,0);
\strand
(-.2,.5) to ++(0,-.8)
arc (0:-90:.15)
to ++(-.2,0)
arc (-90:-180:.15);
\flipcrossings{1,2}
\end{knot}
\draw[very thick, -latex] (1.1,1.2) to ++(-.1,0);
\draw[very thick, -latex] (-.7,1) to ++(-.1,0);
\draw[thick,fill=white] (-2.8,.9) -- ++(1,0) -- ++(0,.6) -- ++(-1,0) -- ++(0,-.6) -- cycle;
\draw[thick,fill=white] (-1.1,2.1) -- ++(1,0) -- ++(0,1.2) -- ++(-1,0) -- ++(0,-1.2) -- cycle;
\node[right] at (1.8,1.2) {$x$};
\node[right] at (0,1) {$y$};
\node at (1.7,2) {$\ell_{x}$};
\node at (-.1,-.4) {$\ell_{y}$};
\node at (-2.3,1.2) {$n$};
\node at (-.6,2.7) {$m$};
\end{tikzpicture}
$$
\vspace*{0pt}
\caption{The $(m,n)$-Double Twisted Double Pattern.
\label{fig2}}
\end{figure}
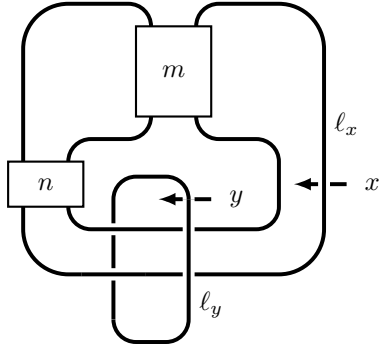

\section{Proof of Theorems~\ref{wnzthm} and~\ref{doublellin}}
\label{proof2}
We prove the more general result about the $r$-twisted Whitehead doubles of graph knots, then refer to specific examples as corollaries of .
We begin with a theorem from Ruppe's thesis:
\begin{theorem}~\cite{ruppe_2016}
\label{ruppethm}
For a $(p,q)$-torus knot $T(p,q)$,
\begin{align}
A_{D_{r}(T(p,q))}=(L-1)\widetilde{A}_{K(r)}\widetilde{A}_{K(r-pq)}.
\end{align}
\end{theorem}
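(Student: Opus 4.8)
The plan is to obtain this formula as the torus-knot specialization of Theorem~\ref{doublellin}. Every torus knot is a graph knot, so $T(p,q)\in\mathcal{G}_{0}$ and Theorem~\ref{doublellin} applies once the strongly detected boundary slopes $\mathcal{DS}_{T(p,q)}$ are known; determining these is essentially the only arithmetic that remains.

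First I would read off $\mathcal{DS}_{T(p,q)}$ from the known $A$-polynomial. By Remark~\ref{iteratedtorus}, $A_{T(p,q)}=(L-1)F_{(p,q)}(L,M)$, and in non-normalized form $F_{(p,q)}\doteq LM^{2p}+1$ when $q=2$ while $F_{(p,q)}\doteq L^{2}M^{2pq}-1=(LM^{pq}-1)(LM^{pq}+1)$ when $q>2$. Since $\mathrm{Newt}(A_{T(p,q)})$ is the Minkowski sum of the Newton polygons of its factors, its edges have only slopes $0$ (from $L-1$) and $pq$ (from each binomial factor of $F_{(p,q)}$, using $2p=pq$ in the case $q=2$). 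The convention $|p|>q\geq2$ forces $pq\neq0$, so these are two distinct killing slopes and $\mathcal{DS}_{T(p,q)}=\{0,pq\}$.

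Next I would substitute this into Theorem~\ref{doublellin} with $C=T(p,q)$ and twisting parameter $r$:
$$A_{D_{r}(T(p,q))}=(L-1)\prod_{s\in\mathcal{DS}_{T(p,q)}}\widetilde{A}_{K(r-s)}=(L-1)\,\widetilde{A}_{K(r-0)}\,\widetilde{A}_{K(r-pq)}=(L-1)\widetilde{A}_{K(r)}\widetilde{A}_{K(r-pq)},$$
which is the asserted identity. As already observed in connection with Theorem~\ref{doublellin}, no polynomial reduction is needed: the twist-knot factors $\widetilde{A}_{K(n)}$ are irreducible and pairwise non-associate, hence coprime, and none is divisible by $L-1$; non-associativity here is visible from $\deg_{L}\widetilde{A}_{K(n)}=2n-1$ for $n\geq1$ and $-2n$ for $n\leq0$ (read off Theorem~\ref{twistpolynomial} or~\ref{twistpolynomialrec}), together with $r\neq r-pq$. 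The step I expect to require the most care is precisely this one --- verifying that $\widetilde{A}_{K(r)}$ and $\widetilde{A}_{K(r-pq)}$ share no common factor, so that the right-hand side is genuinely in reduced form; the rest is formal once $\mathcal{DS}_{T(p,q)}$ has been identified.

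Finally, I would point out that this recovers the computation of~\cite{ruppe_2016} without circularity, since Theorem~\ref{doublellin} descends from Theorem~\ref{wnzthm} whose proof (through Lemmas~\ref{reps012} and~\ref{reps012d} and the no-gaps Theorem~\ref{g0nogaps}) never invokes Ruppe's result. Alternatively one can reproduce the self-contained argument of~\cite{ruppe_2016}: carry out the resultant elimination of Section~\ref{rtwistedcomps} with $C=T(p,q)$, where $\widetilde{A}_{T(p,q)}=F_{(p,q)}$ is a binomial so that $f_{K,r}(M,t,u)=F_{(p,q)}(uv^{-r},v)$ is simple enough for $\mathrm{Res}_{u}[\mathrm{Res}_{t}[f_{K,r},f_{W}],\mathrm{Res}_{t}[f_{W},F_{W}]]$ to be evaluated explicitly and the extraneous factors discarded by a boundary-slope check; in that route the real work lives in the explicit twist-knot formulas of Hoste--Shanahan and Mathews.
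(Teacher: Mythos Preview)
Your proposal is correct and is precisely how the paper itself recovers this result: the paper cites Theorem~\ref{ruppethm} from Ruppe's thesis without giving a separate proof, and then obtains it (and its generalization to iterated torus knots, Corollary~\ref{iteratedtorusdouble}) as an immediate consequence of Theorem~\ref{doublellin} together with the strongly detected boundary slopes $\mathcal{DS}_{T(p,q)}=\{0,pq\}$ read off from Remark~\ref{iteratedtorus}. Your observation that this is non-circular, since the chain Theorem~\ref{g0nogaps} $\Rightarrow$ Lemma~\ref{reps012d} $\Rightarrow$ Theorem~\ref{wnzthm} $\Rightarrow$ Theorem~\ref{doublellin} never invokes Ruppe, is also in line with the paper's logical structure.
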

As one possible generalization of this result, we will show for an iterated torus knot $[(p_{1},q_{1}),\ldots,(p_{n},q_{n})]$, the $A$-polynomial of the  $r$-twisted Whitehead double of this knot is given by Corollary~\ref{iteratedtorusdouble}; another generalization is the $r$-twisted Whitehead double of the connected sum of two torus knots, given in Corollary~\ref{doubleconnectedsum}.
In general, for a graph knot $C\in\mathcal{G}_{0}$, Theorem~\ref{doublellin} will give the $A$-polynomial of any $n$-twisted Whitehead double $A_{D_{n}(C)}$.
Notice in Theorem~\ref{ruppethm}, the first two factors of $A_{D_{r}(T(p,q))}$ are $(L-1)$ and $\widetilde{A}_{K(r)}$, which is a restatement of the fact that $A_{P}|A_{\mathrm{Sat}(P,C,f)}$.
Recalling the earlier notation $A$-polynomial of the satellite knot $\mathrm{Sat}(P,C,f)$, for embedded pattern knot $P$ and companion knot $C$, can be written in the form:
$$
A_{\mathrm{Sat}(P,C,f)}=\mathrm{Red}[A_{P}\widetilde{F}_{\mathrm{Sat}(P,C,f)}].
$$
For $P=K(r)$, the last factor $\widetilde{F}_{\mathrm{Sat}(K(r),C,f)}|A_{\mathrm{Sat}(K(r),C,f)}=A_{D_{r}(C)}$ that requires Theorem~\ref{wnzthm}, which is a consequence of the results from Section~\ref{zerodouble}:
\subsection*{Proof of Theorem~\ref{wnzthm}}
By Lemmas~\ref{reps0},~\ref{reps01}, and~\ref{reps012}, for a winding number zero satellite knot $K=\mathrm{Sat}(P,C,f)$ with $C\in\mathcal{G}_{0}$, we find that
$$
\overline{\xi(R_{U}(\mathcal{M}_{K}))}=\mathbb{V}\left((L-1)\underset{r\in\mathcal{DS}_{C}}{\prod}\widetilde{A}_{f(P)_{r}}\right).
$$
By Lemma~\ref{reps012d}, we see that $A_{f(P)_{r}}|A_{K}$ for each slope $r\in\mathcal{DS}_{C}$, and hence
$$
\mathrm{Red}\left[(L-1)\underset{r\in\mathcal{DS}_{C}}{\prod}\widetilde{A}_{f(P)_{r}}\right]\Big|A_{K}.
$$
Furthermore, for all but finitely many points $(L,M)$ in the zero locus of the product, we find that there will be representations $\rho\in R_{U}(\mathcal{M}_{K})$ such that $\xi(\rho)=(L,M)$ and hence
$$
A_{K}=\mathrm{Red}\left[(L-1)\underset{r\in\mathcal{DS}_{C}}{\prod}\widetilde{A}_{f(P)_{r}}\right].
$$
We must reduce this polynomial formula in general since the $A_{K}$ does not contain any repeated factors, and depending on the different $A$-polynomials $\widetilde{A}_{f(P)_{r}}$, there may be repeated factors.
This completes the proof.
\null\hfill$\square$\\\\

Some additional lemmas to omit polynomial reduction for $A_{D_{n}(C)}$ are:
\begin{lemma}~\cite{hs_2004}
\label{irreducibletwists}
For any $n$-twist knot $K(n)$, $\widetilde{A}_{K(n)}$ is irreducible.
\end{lemma}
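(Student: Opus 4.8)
The plan is to deduce the statement from irreducibility of the underlying curve together with the fact that $A$-polynomials are reduced and primitive: since $A_{K(n)}$ has no repeated factors and is primitive in $\mathbb{Z}[L,M]$, its factor $\widetilde{A}_{K(n)}$ is an irreducible element of $\mathbb{Z}[L,M]$ as soon as the affine curve $\mathbb{V}(\widetilde{A}_{K(n)})\cap(\mathbb{C}^{\ast})^{2}$ is irreducible. (For $n=0$ the knot $K(0)$ is the unknot and $\widetilde{A}_{K(0)}=1$, which we discard as the trivial case, so assume $n\neq0$.) Thus everything reduces to showing this curve is irreducible.

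I would establish that through the two-bridge/Riley description. The group of $K(n)=J(2,2n)$ has a presentation $\langle a,b\mid a\,w^{n}=w^{n}b\rangle$ with $a,b$ meridians and $w$ a fixed commutator-type word, and every nonabelian $SL_{2}\mathbb{C}$-representation is conjugate to one determined by a meridian eigenvalue $m$ and an auxiliary parameter $u$, the defining relation being equivalent to the vanishing of the Riley polynomial $\Phi_{n}(m,u)$. Writing $\rho(w)^{n}=S_{n-1}(z)\rho(w)-S_{n-2}(z)I$ with $z=\mathrm{tr}\,\rho(w)$ and $S_{k}$ the Chebyshev polynomials of the second kind exhibits a recursion $\Phi_{n+1}=\gamma\Phi_{n}-\Phi_{n-1}$ with $\gamma\in\mathbb{Z}[m^{\pm1},u]$ fixed --- the representation-variety shadow of the recursion in Theorem~\ref{twistpolynomialrec}. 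On the Riley curve $\mathbb{V}(\Phi_{n})$ the longitude eigenvalue $L$ is a rational function of $(m,u)$, so $(m,u)\mapsto(L,M)$ (with $M=m$) is a rational map from $\mathbb{V}(\Phi_{n})$ to $\mathbb{V}(\widetilde{A}_{K(n)})$; since $K(n)$ is two-bridge, hence small, Lemma~\ref{smallnogap} says every point of the $A$-polynomial curve is realized by a representation, so the map is dominant and irreducibility of $\mathbb{V}(\Phi_{n})$ transfers to $\mathbb{V}(\widetilde{A}_{K(n)})$. (That the map is in fact generically one-to-one, allowing the converse transfer as well, can be read off from the explicit formulas of Theorem~\ref{twistpolynomial}, but only dominance is needed here.)

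The crux, and the step I expect to be the main obstacle, is irreducibility of $\Phi_{n}$ itself: a naive induction along the Chebyshev-type recursion fails, since recursions $f_{n+1}=\gamma f_{n}-f_{n-1}$ routinely produce reducible polynomials, so irreducibility is a genuine feature of the twist-knot family that must be extracted from the precise shape of the recursion. Concretely I would first show consecutive members are coprime: any common divisor of $\widetilde{A}_{K(n)}$ and $\widetilde{A}_{K(n-1)}$ divides $y=M^{4}(L+M^{2})^{4}$, and an easy induction on the recursion of Theorem~\ref{twistpolynomialrec} --- using $x\equiv\widetilde{A}_{K(-1)}\pmod{L+M^{2}}$, the nonvanishing of $x$ modulo $M$, and the base cases $\widetilde{A}_{K(\pm1)}$ --- shows that neither $M$ nor $L+M^{2}$ divides $\widetilde{A}_{K(n)}$, whence $\gcd(\widetilde{A}_{K(n)},\widetilde{A}_{K(n-1)})=1$ for all $n$. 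Then I would pin down bidegrees: Theorem~\ref{twistpolynomial} gives $\deg_{L}\widetilde{A}_{K(n)}=2n-1$ for $n\geq1$ (and $-2n$ for $n\leq-1$) with monomial extreme $L$-coefficients, so a hypothetical proper factorization $\widetilde{A}_{K(n)}=fg$ would force $f,g$ to have prescribed $L$-degrees and monomial leading and trailing $L$-coefficients; feeding this constraint back into the recursion together with the already-known irreducibility of the small cases $\widetilde{A}_{K(\pm1)},\widetilde{A}_{K(\pm2)}$ should yield a contradiction. If a clean self-contained argument along these lines proves elusive, the honest alternative is to invoke the character-variety computations for the two-bridge knots $J(2,2n)$, which show directly that the nonabelian character variety of $K(n)$ is an irreducible curve, or simply to cite Hoste and Shanahan~\cite{hs_2004}, who establish this irreducibility en route to their formula.
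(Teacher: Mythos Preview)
The paper does not supply a proof of this lemma at all: it is stated with a bare citation to Hoste--Shanahan~\cite{hs_2004} and used as a black box. So there is no argument in the paper to compare your proposal against; the intended ``proof'' is simply the reference.

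Your proposal is therefore not so much an alternative route as an attempt to unpack what the cited paper does. The reduction step is sound: the dominant rational map from the Riley curve $\mathbb{V}(\Phi_{n})$ onto $\mathbb{V}(\widetilde{A}_{K(n)})$ does transfer irreducibility in the direction you need, and invoking smallness of two-bridge knots via Lemma~\ref{smallnogap} to get dominance is clean. But you correctly identify that the genuine content sits in the irreducibility of $\Phi_{n}$, and your sketch for that step --- coprimality of consecutive terms plus degree-and-leading-coefficient bookkeeping fed back through the recursion --- is not an argument, only a plan, and you say as much. The coprimality part goes through as you describe; the ``feeding this constraint back into the recursion \ldots\ should yield a contradiction'' part is exactly where Hoste and Shanahan do real work (they argue via the specific form of the Riley polynomial and a careful factorization analysis), and nothing in your outline substitutes for it. Since your own stated fallback is to cite~\cite{hs_2004}, you end up in the same place as the paper. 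If you want a self-contained write-up, you would need to reproduce the Hoste--Shanahan irreducibility argument in full rather than gesture at it.
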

\begin{lemma}~\cite{hs_2004}
\label{uniquetwists}
For two integers $m\neq n$, $A_{K(m)}\neq A_{K(n)}$.
\end{lemma}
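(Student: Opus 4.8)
The plan is to separate $A_{K(m)}$ from $A_{K(n)}$ by their degree in the variable $L$. Since $A_{K(n)}=(L-1)\widetilde{A}_{K(n)}$, writing $d(n):=\deg_L\widetilde{A}_{K(n)}$ it suffices to show $n\mapsto d(n)$ is injective on $\mathbb{Z}$. I would first prove the closed form
$$
d(n)=\begin{cases}
2n-1,& n\geq1,\\
0,& n=0,\\
-2n,& n\leq-1,
\end{cases}
$$
so that $\deg_L A_{K(n)}=d(n)+1$ equals $2n$ for $n\geq1$, equals $1$ for $n=0$, and equals $1-2n$ for $n\leq-1$. As $n$ ranges over $\mathbb{Z}$ these values run bijectively over the positive integers — the even ones at least $2$ from $n\geq1$, the odd ones at least $3$ from $n\leq-1$, and the value $1$ from $n=0$ — hence $n\mapsto\deg_L A_{K(n)}$ is injective and $A_{K(m)}=A_{K(n)}$ forces $m=n$.

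To establish the formula I would induct on $|n|$ using the recursion of Theorem~\ref{twistpolynomialrec}, taking as base cases the four explicitly listed polynomials $\widetilde{A}_{K(-1)},\widetilde{A}_{K(0)},\widetilde{A}_{K(1)},\widetilde{A}_{K(2)}$ together with a one-line computation of $\widetilde{A}_{K(-2)}=x\widetilde{A}_{K(-1)}-y$, which gives $d(-2)=4$. For $n\geq3$ the recursion reads $\widetilde{A}_{K(n)}=x\widetilde{A}_{K(n-1)}-y\widetilde{A}_{K(n-2)}$ and for $n\leq-3$ it reads $\widetilde{A}_{K(n)}=x\widetilde{A}_{K(n+1)}-y\widetilde{A}_{K(n+2)}$; using $\deg_L x=2$ and $\deg_L y=4$, in both cases the two summands have the same $L$-degree ($2n-1$, resp. $-2n$), so the induction hypothesis must be strengthened to carry along the leading $L$-coefficient of $\widetilde{A}_{K(n)}$ as a polynomial in $M$. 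I expect this coefficient to be $1$ for all $n\geq1$ and $M^{-4n}$ for all $n\leq-1$, which is exactly what makes the top-degree terms survive: on the positive side the leading $L$-coefficient of $x\widetilde{A}_{K(n-1)}-y\widetilde{A}_{K(n-2)}$ is $(M^4+1)\cdot1-M^4\cdot1=1$, and on the negative side that of $x\widetilde{A}_{K(n+1)}-y\widetilde{A}_{K(n+2)}$ is $(M^4+1)M^{-4(n+1)}-M^4\cdot M^{-4(n+2)}=M^{-4n}$, both nonzero; this closes the induction and yields both $d(n)$ and the claimed leading coefficient.

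The main obstacle is precisely this non-cancellation check: because the two terms of the recursion have equal $L$-degree, $d(n)$ cannot be read off from degrees alone, and one must track the exact leading $M$-polynomial coefficient through the whole induction (this is why two explicit base cases are needed on each side, $\widetilde{A}_{K(1)},\widetilde{A}_{K(2)}$ for $n>0$ and $\widetilde{A}_{K(-1)},\widetilde{A}_{K(-2)}$ for $n<0$). The remaining ingredients — the base-case evaluations and the injectivity of the resulting degree sequence — are routine. As an alternative one could instead use the span of $\widetilde{A}_{K(n)}$ in $M$, or compute $\mathrm{Newt}(A_{K(n)})$ as the Minkowski sum over its irreducible factors, but the $L$-degree recursion is the most direct route.
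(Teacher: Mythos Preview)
Your argument is correct. The $L$-degree computation via the Hoste--Shanahan recursion goes through exactly as you describe: the base cases $d(-2),d(-1),d(0),d(1),d(2)$ match, and the strengthened induction carrying the leading $M$-coefficient ($1$ on the positive side, $M^{-4n}$ on the negative side) resolves the equal-degree cancellation issue cleanly. The resulting map $n\mapsto\deg_L A_{K(n)}$ is indeed injective, hitting the even integers $\geq2$ for $n\geq1$, the odd integers $\geq3$ for $n\leq-1$, and $1$ for $n=0$.

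As for comparison with the paper: there is nothing to compare. The paper does not prove this lemma at all---it simply cites it from Hoste--Shanahan~\cite{hs_2004}, where it is established (in fact Hoste--Shanahan prove the stronger statement that $A_{K(n)}$ determines $K(n)$ among all two-bridge knots). Your route is therefore genuinely different in that it is self-contained: you extract the needed injectivity directly from the recursion already quoted in Theorem~\ref{twistpolynomialrec}, without appealing to the original source. The advantage is that your proof stays entirely within the paper's toolkit; the trade-off is that Hoste--Shanahan's original argument gives more (irreducibility and a sharper distinguishing result), which the paper also needs for Lemma~\ref{irreducibletwists} and so must cite anyway.
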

\subsection*{Proof of Theorem~\ref{doublellin}}
By Lemma~\ref{reps012d}, we know that
$$
\overline{\xi(R_{0})}\cup\overline{\xi(R_{1})}\cup\overline{\xi(R_{2})}=\mathbb{V}\left((L-1)\underset{r\in\mathcal{DS}_{C}}{\prod}\widetilde{A}_{f(K(n))_{r}}\right),
$$
where $f(K(n))_{r}$ is the knot obtained from the embedding of $K(n)$ into the solid torus given by Figure~\ref{fig1} with the $(1/r)$-Dehn filling $V(1/r)$.
Notice that $f(K(n))_{r}=K(n-r)$ since the $(1/r)$-Dehn filling can be understood as $-r$ full-twists on the boundary of $V$, hence,
$$
\overline{\xi(R_{0})}\cup\overline{\xi(R_{1})}\cup\overline{\xi(R_{2})}=\mathbb{V}\left((L-1)\prod_{r\in\mathcal{DS}_{K}}\widetilde{A}_{K(n-r)}\right).
$$
Since each $\widetilde{A}_{K(n-r)}$ is irreducible by Lemma~\ref{irreducibletwists} and distinct by Lemma~\ref{uniquetwists}, with $\widetilde{A}_{K(0)}=1$, and since each slope $r\in\mathcal{DS}_{C}$ is distinct, we see that $\mathrm{Red}\left[\widetilde{A}_{K(n-r)}\widetilde{A}_{K(n-s)}\right]=\widetilde{A}_{K(n-r)}\widetilde{A}_{K(n-s)}$ for all $r\neq s$.
Therefore,
$$
A_{D_{n}(C)}=(L-1)\prod_{r\in\mathcal{DS}_{C}}\widetilde{A}_{K(n-r)},
$$
as claimed which completes the proof.
\null\hfill$\square$\\\\

The following corollaries provide many computational examples and are immediate consequences of Theorem~\ref{doublellin}, Remark~\ref{iteratedtorus}, and Corollary~\ref{torconnected}.
The strongly detected boundary slopes of iterated torus knots $[(p_{1},q_{1}),\ldots,(p_{n},q_{n})]$ were noted in~\cite{nz_2017} as $p_{i}q_{i}\prod_{j=1}^{i-1}q_{j}^{2}$ which were also shown to be distinct.
The slopes of $T(p,q)\#T(p',q')$ are also easy to find given a calculation of $A_{T(p,q)\#T(p',q')}$ from Theorem~\ref{gzconnect}.

\begin{corollary}
\label{iteratedtorusdouble}
The $A$-polynomial of the $r$-twisted Whitehead double of an iterated torus knot, denoted $D_{r}[(p_{1},q_{1}),\ldots,(p_{n},q_{n})]$, is given by
\begin{align}
A_{D_{r}[(p_{1},q_{1}),\ldots,(p_{n},q_{n})]}=(L-1)\widetilde{A}_{K(r)}\underset{i=1}{\overset{}{\prod}}\widetilde{A}_{K\left(r-p_{i}q_{i}\Pi_{j=1}^{i-1}q_{j}^{2}\right)}.
\end{align}
\end{corollary}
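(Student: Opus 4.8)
The plan is to derive this as an immediate consequence of Theorem~\ref{doublellin}, once we identify the strongly detected boundary slopes of an iterated torus knot. First I would observe that $C=[(p_1,q_1),\ldots,(p_n,q_n)]$ lies in $\mathcal{G}_0$: it is obtained from the unknot by iterated $(p,q)$-cabling, and $\mathcal{G}_0=\langle U\mid[(p,q),-],\#\rangle$ is closed under cabling, so Theorem~\ref{doublellin} applies and gives $A_{D_r(C)}=(L-1)\prod_{r'\in\mathcal{DS}_C}\widetilde{A}_{K(r-r')}$. It therefore remains only to compute $\mathcal{DS}_C$.

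Next I would read $\mathcal{DS}_C$ off the explicit formula in Remark~\ref{iteratedtorus}(2). Writing $A_C=(L-1)\prod_{i=1}^{k}F_{(p_i,q_i)}\bigl(L,M^{\prod_{j<i}q_j^{2}}\bigr)\prod_{i=k+1}^{n}G_{(p_i,q_i)}\bigl(L,M^{\prod_{j<i}q_j^{2}}\bigr)$ and using the non-normalized forms, each factor is (up to normalization and the split $L^{2}M^{2a}-1=(LM^{a}-1)(LM^{a}+1)$) of the shape $LM^{\bullet}-\delta$. The $i$-th cable contributes killing slope $2p_i\prod_{j<i}q_j^{2}$ when $q_i=2$ (from $F_{(p_i,2)}\doteq LM^{2p_i\prod_{j<i}q_j^{2}}+1$) and $p_iq_i\prod_{j<i}q_j^{2}$ when $q_i>2$ (from either linear-in-$L$ factor of $F_{(p_i,q_i)}$, or from $G_{(p_i,q_i)}$); in both cases this equals $p_iq_i\prod_{j=1}^{i-1}q_j^{2}$. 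As noted after Theorem~\ref{wnzthm}, when $q_i>2$ the two factors $LM^{r}\pm1$ share the slope $r$ and contribute the same $\widetilde{A}_{f(P)_r}$, so each slope is counted once; together with the slope $0$ coming from $(L-1)$ by Remark~\ref{trivfactor}, this yields $\mathcal{DS}_C=\{0\}\cup\{p_iq_i\prod_{j=1}^{i-1}q_j^{2}:1\le i\le n\}$.

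Finally I would invoke the fact from~\cite{nz_2017}, recalled just before the corollary, that the slopes $p_iq_i\prod_{j=1}^{i-1}q_j^{2}$ are distinct from one another and from $0$, so $\mathcal{DS}_C$ has exactly $n+1$ elements. Substituting these into Theorem~\ref{doublellin}, the term $r'=0$ gives $\widetilde{A}_{K(r)}$ and the terms $r'=p_iq_i\prod_{j<i}q_j^{2}$ give $\widetilde{A}_{K(r-p_iq_i\prod_{j=1}^{i-1}q_j^{2})}$, producing the stated formula; no extra polynomial reduction is needed, since $\widetilde{A}_{K(m)}$ is irreducible (Lemma~\ref{irreducibletwists}), distinct twist parameters give distinct polynomials (Lemma~\ref{uniquetwists}), and the slopes involved are distinct.

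The main obstacle is the bookkeeping in the second step: one must check carefully that after the monomial substitution $M\mapsto M^{\prod_{j<i}q_j^{2}}$ the killing slope from the $i$-th cable is $p_iq_i\prod_{j=1}^{i-1}q_j^{2}$ in every case, including the splitting of $L^{2}M^{2a}-1$ into two linear-in-$L$ factors and the sign conventions for negative $p_i$, so that no slope is spuriously created or lost. This is entirely routine once Remark~\ref{iteratedtorus} is unwound, so given Theorem~\ref{doublellin} and the distinctness statement from~\cite{nz_2017} the corollary is indeed immediate.
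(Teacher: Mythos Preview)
Your proposal is correct and follows essentially the same approach as the paper: the paper simply states that the corollary is an immediate consequence of Theorem~\ref{doublellin}, Remark~\ref{iteratedtorus}, and the fact from~\cite{nz_2017} that the strongly detected boundary slopes of $[(p_{1},q_{1}),\ldots,(p_{n},q_{n})]$ are $p_{i}q_{i}\prod_{j=1}^{i-1}q_{j}^{2}$ and are distinct. Your write-up just unpacks this bookkeeping more explicitly, including the case distinctions for $F_{(p,q)}$ versus $G_{(p,q)}$ and the appeal to Lemmas~\ref{irreducibletwists} and~\ref{uniquetwists} to justify omitting polynomial reduction.
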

\begin{corollary}
\label{doubleconnectedsum}
For torus knots $T(p,q),T(p',q')$ with $q\geq q'$, the $A$-polynomial of the $n$-twisted Whitehead double of their connected sum $K=T(p,q)\#T(p',q')$ is given by
\begin{align*}
A_{D_{n}(K)}&\doteq\begin{cases}
(L-1)\widetilde{A}_{K(n)}\widetilde{A}_{K(n-pq)}\widetilde{A}_{K(n-p'q')}\widetilde{A}_{K(n-(pq+p'q'))}&:\hspace{4pt}|p|q\neq|p'|q',\\
(L-1)\widetilde{A}_{K(n)}\widetilde{A}_{K(n-pq)}\widetilde{A}_{K(n-2pq)}&:\hspace{4pt}pq=p'q',\\
(L-1)\widetilde{A}_{K(n)}\widetilde{A}_{K(n-pq)}\widetilde{A}_{K(n+pq)}&:\hspace{4pt}pq=-p'q'.
\end{cases}
\end{align*}
\end{corollary}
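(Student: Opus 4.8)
The plan is to feed the connected–sum computation of Theorem~\ref{gzconnect} into the Whitehead–double formula of Theorem~\ref{doublellin}. Since $K=T(p,q)\#T(p',q')$ is obtained from torus knots by a single connected sum it is a graph knot, $K\in\mathcal{G}_0$, so Theorem~\ref{doublellin} applies to $K$ and gives
$$
A_{D_n(K)}=(L-1)\underset{r\in\mathcal{DS}_K}{\prod}\widetilde{A}_{K(n-r)},
$$
which reduces everything to identifying the finite \emph{set} $\mathcal{DS}_K$ of strongly detected boundary slopes of $K$.

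To compute $\mathcal{DS}_K$, recall from Remark~\ref{iteratedtorus} that $A_{T(p,q)}\doteq(L-1)F_{(p,q)}$ with $F_{(p,q)}\doteq LM^{pq}+1$ when $q=2$ and $F_{(p,q)}\doteq(LM^{pq}-1)(LM^{pq}+1)$ when $q>2$; in either case the set of killing slopes of $T(p,q)$ is $\{0,pq\}$, and likewise $\{0,p'q'\}$ for $T(p',q')$. Applying Theorem~\ref{gzconnect} (equivalently Corollary~\ref{torconnected}) to write $A_K$ as $\mathrm{Red}$ of a product of monomial–difference factors $LM^{r_i+s_j}-\delta_i\delta_j$, and reading off the power of $M$ in each surviving factor, gives that $\mathcal{DS}_K$ is exactly the sumset $\{0,pq\}+\{0,p'q'\}=\{0,\,pq,\,p'q',\,pq+p'q'\}$. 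The one point to watch is that passing to $\mathrm{Red}$, together with the possibility that a slope is carried by both a $(LM^r+1)$ and a $(LM^r-1)$ factor, neither collapses two distinct killing slopes nor erases one: since $\mathrm{Newt}(A_K)$ is the Minkowski sum of the Newton polygons of the two torus-knot factors, each of the four sums above genuinely occurs as an edge slope, and a slope $r$ carried by both signs still contributes only a single $\widetilde{A}_{K(n-r)}$, because $\widetilde{A}_{f(P)_r}$ is independent of the sign $\delta$ by the remark following Theorem~\ref{wnzthm}.

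It then remains to perform the case analysis forced by which of the four values $0,pq,p'q',pq+p'q'$ coincide, using $q,q'\geq2$ to get $pq,p'q'\neq0$. If $|p|q\neq|p'|q'$, then $pq\neq\pm p'q'$, the four values are pairwise distinct, $\mathcal{DS}_K=\{0,pq,p'q',pq+p'q'\}$, and substituting into the product (with $\widetilde{A}_{K(n-0)}=\widetilde{A}_{K(n)}$) gives the first line. If $pq=p'q'$, then $\mathcal{DS}_K=\{0,pq,2pq\}$, which has three distinct elements since $pq\neq0$, giving the second line. If $pq=-p'q'$, then $\mathcal{DS}_K=\{-pq,0,pq\}$, so $n-r$ runs over $n+pq,\ n,\ n-pq$, giving the third line. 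Since the slopes in $\mathcal{DS}_K$ are distinct, the resulting factors $\widetilde{A}_{K(n-r)}$ are distinct irreducible polynomials (or equal to $1$ when $n=r$) by Lemmas~\ref{irreducibletwists} and~\ref{uniquetwists}, so the displayed product is already squarefree and no further reduction is needed, which is the stated identity up to normalization.

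The step I expect to be the main obstacle is the bookkeeping in the second paragraph: carefully tracking the sign data $\delta_i\delta_j$ through Theorem~\ref{gzconnect} and confirming that the surviving killing slopes of $A_K$ form the full sumset rather than a proper subset of it. Once $\mathcal{DS}_K$ is pinned down, the rest is a purely finite case check with no content beyond Theorem~\ref{doublellin}.
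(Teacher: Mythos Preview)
Your proposal is correct and follows exactly the route the paper indicates: the paper declares this corollary an ``immediate consequence of Theorem~\ref{doublellin}, Remark~\ref{iteratedtorus}, and Corollary~\ref{torconnected},'' and you have simply spelled out that deduction in detail. One small correction to your justification in the second paragraph: the Minkowski-sum remark is not quite the right reason (since $A_K$ is not literally the product $A_{T(p,q)}\cdot A_{T(p',q')}$), but the conclusion that $\mathcal{DS}_K$ is the full sumset follows more directly just by reading Theorem~\ref{gzconnect}---each pair $(i,j)$ contributes a factor $LM^{r_i+s_j}-\delta_i\delta_j$, and $\mathrm{Red}$ only lowers multiplicities, never deletes a slope.
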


\section{The $r$-Twisted Whitehead Double of $n$-Twist Knots}
\label{whiteheadtwist}
When $C=K(n)$ for $n\neq1,0$, we notice that $\mathrm{Vol}(\mathcal{M}_{C})>0$ since $K(n)$ is hyperbolic; additionally, the $r$-twisted Whitehead double $D_{r}(C)$ will have satellite space $\mathcal{M}_{2}$ in the JSJ-decomposition also with positive hyperbolic volume, $\mathrm{Vol}(\mathcal{M}_{2})>0$.
The $A$-polynomial of $K=D_{r}(C)$ will be more difficult than for the case of graph knots, though it can be found as a factor of the iterated resultant (which typically factors into multiple irreducible polynomial factors):
$$
\mathrm{Res}_{u}\left[\mathrm{Res}_{t}\left[A_{C}(uv^{-r},v),f_{W}\right],\mathrm{Res}_{t}\left[f_{W},F_{W}\right]\right]=
[P_{K(n),r}(L,M)]^{2}[Q_{K(n),r}(L,M)]^{2}.
$$

\begin{remark}
For our computations, we will not heavily distinguish between $D_{r}(K(n))$ and $D_{r}(K(n))^{\ast}$ since it is clear that $D_{r}(C)^{\ast}=D_{-r}(C^{\ast})$ and therefore $D_{r}(K(n))^{\ast}=D_{-r}(J(2,2n)^{\ast})=D_{-r}(J(-2,-2n))$.
However, in this particular case, $K(-1)^{\ast}=K(-1)$ hence we see that $D_{r}(K(-1))^{\ast}=D_{-r}(K(-1))$.
\end{remark}

Since we know that $A_{K(r)}|A_{D_{r}(K(n))}$, we will denote the remaining factor $A_{K(r)}^{-1}\widetilde{A}_{D_{r}(K(n))}$ by $\widetilde{P}_{K(n),r}=\widetilde{F}_{\mathrm{Sat}(K(r),K(n),f)}$ (following the previous notation from Section~\ref{proof1} for $\widetilde{F}_{\mathrm{Sat}(P,C,f)}$), which is computationally equivalent to $P_{K(n),r}$ as stated in Remark~\ref{fig8dbl} using specific calculations; the other factor $Q_{K(n),r}$ is a byproduct of iterated resultant computations.
\label{slopecomputations}
To verify that the factor $Q_{K(n),r}$ is invalid, we use Hoste and Shanahan's table for boundary slope computations~\cite{hs_2007} (here using particular $\mathcal{L}_{3/8}=W$ with $k=1$ in their notation $\mathcal{L}_{\frac{4k-1}{8k}}$), to find boundary slope pairs for $\mathcal{BS}_{W}$:

$$
\begin{tabular}{@{}ccc@{}}
\multicolumn{3}{@{}c}{Table 1. Boundary Slope Pairs for $\mathcal{L}_{3/8}=W$}\\ \hline
\multicolumn{2}{@{}c}{$\partial$-Slopes} & Restrictions \\ \hline
$(0,\varnothing)$ & $(\varnothing,0)$ & \\
$(-4,\varnothing)$ & $(\varnothing,-4)$ & \\
\multicolumn{2}{@{}c}{$(2t^{-1},2t)$} & $0\leq t\leq\infty$ \\
\multicolumn{2}{@{}c}{$(-2t^{-1}-2,-2t)$} & $0\leq t\leq 1$ \\
\multicolumn{2}{@{}c}{$(-2t^{-1},-2-2t)$} & $1\leq t\leq\infty$ \\
\multicolumn{2}{@{}c}{$(-3+s,-3-s)$} & $-1\leq s\leq 1$ \\ \hline
\end{tabular}
$$

For example, we realize that the boundary slopes $0,-4$ will always occur in $\mathcal{BS}_{D_{r}(K(n))}$ by the first two lines of Table 1, and without loss of generality, we use the convention that the attaching boundary slope is in the second component.
We verify that the boundary slope pairs given by~\cite{hs_2007} provide us with the means to compute the boundary slopes of $\mathcal{BS}_{D_{r}(K(n))}$ by the following well-known result:
\begin{lemma}
\label{bdyslopegluing}
Let $C$ be a nontrivial knot, let $L=\ell_{x}\cup\ell_{y}$ with $\ell_{y}$ an unknot, and $f:P\hookrightarrow V$ an embedding with $f(P)=\ell_{x}$ such that $\mathcal{M}_{L}=V-N(f(P))$, let $\phi:\partial\mathcal{M}_{C}\to\partial_{y}\mathcal{M}_{L}$ be the standard gluing map with $\partial_{y}\mathcal{M}_{L}=\partial N(\ell_{y})$,
\begin{align*}
\phi(\mu_{K})&=\lambda_{y}&
\phi(\lambda_{K})&=\mu_{y},
\end{align*}
and so $K=\mathrm{Sat}(P,C,f)$ with $V=\mathcal{M}_{\ell_{y}}$.
Then,
$$
\mathcal{BS}_{K}=\left\{m_{x}\middle|\exists m, m_{y}:m\in\mathcal{BS}_{C},(m_{x},m_{y})\in\mathcal{BS}_{L},\tfrac{1}{m}=m_{y}\right\}\cup\left\{m_{x}\middle|(m_{x},\varnothing)\in\mathcal{BS}_{L}\right\}.
$$
\end{lemma}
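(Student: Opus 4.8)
The plan is to run the classical cut-and-paste argument for incompressible surfaces in a $3$--manifold split along an essential torus (Jaco--Shalen, Hatcher, Culler--Gordon--Luecke--Shalen). Write $T=\partial_{y}\mathcal{M}_{L}=\partial\mathcal{M}_{C}$ for the companion torus inside $\mathcal{M}_{K}=\mathcal{M}_{C}\cup_{\phi}\mathcal{M}_{L}$; since $C$ is nontrivial and $f(P)$ satisfies the satellite conditions, $T$ is incompressible and not boundary parallel in $\mathcal{M}_{K}$, it separates $\mathcal{M}_{K}$ into $\mathcal{M}_{C}$ and $\mathcal{M}_{L}$, and the inclusions of both pieces are $\pi_{1}$--injective. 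First I would record the slope bookkeeping forced by $\phi$. The standard untwisted gluing carries the companion meridian to the curve $\lambda_{y}$ (which bounds the meridian disc of the solid torus $V$) and the companion longitude to the curve $\mu_{y}$, so a curve of slope $m=p/q$ on $\partial\mathcal{M}_{C}$, namely $[\lambda_{C}^{\,q}\mu_{C}^{\,p}]$, is sent to $[\mu_{y}^{\,q}\lambda_{y}^{\,p}]=[\lambda_{y}^{\,p}\mu_{y}^{\,q}]$, which has slope $q/p=1/m$ on $\partial_{y}\mathcal{M}_{L}$. Hence an essential surface in $\mathcal{M}_{C}$ of slope $m$ and one in $\mathcal{M}_{L}$ of $y$--slope $m_{y}$ can be glued along $T$ precisely when $m_{y}=1/m$ (with $1/0=\infty$ and $1/\infty=0$), the matching condition in the statement.

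For the inclusion of $\mathcal{BS}_{K}$ into the right-hand side, take a connected essential surface $S\subset\mathcal{M}_{K}$ with boundary slope $m_{x}$ and isotope it so that $\lvert S\cap T\rvert$ is minimal. The usual innermost-disc and outermost-arc moves, using incompressibility of $T$ and of $S$ and irreducibility of the knot exteriors, show that $S\cap T$ is a (possibly empty) family of mutually parallel essential curves on $T$ and that the pieces $S_{C}=S\cap\mathcal{M}_{C}$ and $S_{L}=S\cap\mathcal{M}_{L}$ are incompressible and boundary incompressible, with no trivial disc, sphere, or boundary-parallel annulus component in minimal position. If $S\cap T=\varnothing$, then since $\partial S\neq\varnothing$ lies on $\partial_{x}\mathcal{M}_{L}=\partial\mathcal{M}_{K}$ and $T$ separates, $S=S_{L}$ is disjoint from $\partial_{y}\mathcal{M}_{L}$; it is essential there (incompressibility and boundary incompressibility descend along the $\pi_{1}$--injective inclusion, it has nonempty boundary, and a boundary parallelism in $\mathcal{M}_{L}$ would give one in $\mathcal{M}_{K}$), so $(m_{x},\varnothing)\in\mathcal{BS}_{L}$. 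If $S\cap T\neq\varnothing$, then $S_{C}$ is a nonempty essential surface in $\mathcal{M}_{C}$ whose boundary curves all have some common slope $m$, so $m\in\mathcal{BS}_{C}$; and $S_{L}$ is a nonempty essential surface in $\mathcal{M}_{L}$, from which one extracts the component $S_{L}'$ carrying $\partial S$ --- it is essential, it has boundary slope $m_{x}$ on $\partial_{x}$, and if it also meets $\partial_{y}$ its slope there is $m_{y}=1/m$ by the identity above, giving $(m_{x},m_{y})\in\mathcal{BS}_{L}$ with $1/m=m_{y}$ (and if $S_{L}'$ turns out to miss $\partial_{y}$ we land in the $\varnothing$ clause instead). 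Either way $m_{x}$ lies in the right-hand side.

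For the reverse inclusion I would treat the two clauses of the right-hand side separately. If $(m_{x},\varnothing)\in\mathcal{BS}_{L}$ is realized by an essential $S_{L}\subset\mathcal{M}_{L}$ disjoint from $T$, then $S_{L}$ is essential in $\mathcal{M}_{K}$: it stays incompressible and boundary incompressible by $\pi_{1}$--injectivity, it is not a sphere, and it is not boundary parallel, because a parallelism region between $S_{L}$ and $\partial\mathcal{M}_{K}$ cannot contain $T$ (else $T$ would be compressible or boundary parallel) and hence lies in $\mathcal{M}_{L}$, contradicting essentiality of $S_{L}$ there; so $m_{x}\in\mathcal{BS}_{K}$. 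If instead $m\in\mathcal{BS}_{C}$ is realized by an essential $S_{C}$ with $c$ boundary curves and $(m_{x},m_{y})\in\mathcal{BS}_{L}$ with $m_{y}=1/m$ is realized by an essential $S_{L}$ with $b$ curves on $\partial_{y}$, I would take $b$ parallel copies of $S_{C}$ and $c$ parallel copies of $S_{L}$; their $bc$ curves on $T$ agree in slope (the companion-slope $m$ equals the $\partial_{y}$--slope $m_{y}$ of the same curve there) and can be isotoped to coincide, and gluing yields a surface $\widehat{S}\subset\mathcal{M}_{K}$ with boundary of slope $m_{x}$ on $\partial\mathcal{M}_{K}$. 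By the classical gluing lemma --- incompressible, boundary-incompressible pieces glued along essential curves in an incompressible torus assemble to an incompressible, boundary-incompressible surface --- $\widehat{S}$ is incompressible and boundary incompressible; deleting sphere, disc, and boundary-parallel components leaves an essential surface, and a component carrying the slope-$m_{x}$ curves survives (it is neither a sphere nor a disc, its boundary containing an essential curve of $\partial\mathcal{M}_{K}$, and not a boundary-parallel annulus, since $S_{L}$ is itself essential and meets $T$). Hence $m_{x}\in\mathcal{BS}_{K}$, which completes the equality.

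The main obstacle I expect is the bookkeeping at the two gluing steps rather than any single hard idea. In the backward direction one must be careful that the multi-copy surface $\widehat{S}$ really is incompressible and boundary incompressible in $\mathcal{M}_{K}$ and that, after discarding inessential components, an essential component still realizes $m_{x}$ on the knot boundary --- the delicate point being to rule out that everything carrying $\partial S$ degenerates into boundary-parallel annuli. Symmetrically, in the forward direction one must check that the cut pieces $S_{C}$ and $S_{L}$ persist as genuinely essential surfaces with the asserted slopes (handling trivial components in minimal position) while keeping track of the $m_{y}=1/m$ identity. All of these are standard normal-form and incompressible-surface arguments, so the work lies in organizing them carefully rather than in a new technique.
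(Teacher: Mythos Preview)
Your proposal is correct and follows essentially the same cut-and-paste argument as the paper: both directions use the decomposition along the incompressible companion torus, with the forward inclusion obtained by cutting an essential surface into pieces $S_{C}$ and $S_{L}$, and the reverse inclusion obtained by taking parallel copies to match boundary counts and then gluing. Your version is considerably more careful than the paper's, which treats the result as well known and omits the innermost-disc/outermost-arc normalization, the verification that the cut pieces remain essential, and the check that the glued surface is not boundary parallel; these are exactly the points you flag as the bookkeeping obstacles, and the paper simply asserts them.
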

\begin{proof}
Recall that $m=p/q\in\mathbb{Q}\cup\{\infty\}$ is in $\mathcal{BS}_{K'}$ if there is a properly embedded essential surface $(F,\partial F)\subset(\mathcal{M}_{K'},\partial\mathcal{M}_{K'})$ with $\partial F$ a collection of parallel simple closed curves with slope $p/q$.
Likewise, a slope-pair $(m_{x},m_{y})\in\mathcal{BS}_{L}$ if there is a properly embedded essential surface $F$ in $\mathcal{M}_{L}$ with $\partial_{x} F$ and $\partial_{y}F$ a collection of parallel simple closed curves with slopes $m_{x}$ and $m_{y}$ respectively.
We see immediately that $\left\{m_{x}\middle|(m_{x},\varnothing)\in\mathcal{BS}_{L}\right\}\subset\mathcal{BS}_{K'}$ since any such pair $(m_{x},\varnothing)$ has an associated essential surface $F$ which can also be embedded into $\mathcal{M}_{K'}$.
Likewise, for any slope pair $(m_{x},m_{y})\in\mathcal{BS}_{L}$ with $m_{y}=\tfrac{1}{m}$ for some $m\in\mathcal{BS}_{K}$, we have corresponding essential surfaces $F_{K},F_{L}$, and we may take necessary parallel copies of these surfaces until they agree on the number of boundary components along the gluing torus.
This new surface will be essential in $\mathcal{M}_{K'}$ since it's components are essential in their respective submanifolds, thus $m_{x}\in\mathcal{BS}_{K'}$.

Conversely, if $m_{x}\in\mathcal{BS}_{K'}$, then there exists a properly embedded essential surface $F$ with slope $m_{x}$ along $\partial\mathcal{M}_{K'}$.
If $F$ does not intersect $\mathcal{M}_{K}$ or if $F$ can be isotoped in $\mathcal{M}_{K'}$ so as to not intersect $\mathcal{M}_{K}$, then $(m_{x},\varnothing)\in\mathcal{BS}_{L}$.
However, if $F\cap\mathcal{M}_{K}$ is a nontrivial intersection, then $F\cap\partial\mathcal{M}_{K}$ is a collection of parallel simple closed curves on the torus $\partial\mathcal{M}_{K}$, {\it i.e.} some slope $m$.
This implies that $F=F_{K}\cup_{\phi}F_{L}$ where $F_{K}$ is a properly embedded essential surface with slope $m$ along $\partial\mathcal{M}_{K}$.
The other component $F_{L}$ will exhibit a boundary slope pair $(m_{x},m_{y})\in\mathcal{BS}_{L}$ which must satisfy the gluing relation $\phi$; hence $m_{y}=\tfrac{1}{m}$ and the lemma is proven.
\end{proof}

\begin{remark}
\label{fig8dbl}
We have verified the following formula for $A_{D_{r}(K(-1))}$ for twists $-11\leq r\leq11$:
$$
A_{D_{r}(K(-1))}=(L-1)\widetilde{A}_{K(r)}\widetilde{P}_{K(-1),r},
$$
where the last factor $\widetilde{P}_{K(-1),r}$ in the verified cases is equal to the polynomial $P_{K(-1),r}$ below, computed via resultant methods for $-11\leq r\leq11$,
$$
P_{K(-1),r}=\widetilde{A}_{K(r-4)}\widetilde{A}_{K(r+4)}-L(M^{2}-1)^{3}(M^{2}+1)(L-M^{4})x^{2}y(2x^{2}-y)y^{k(r)}(L+M^{2})^{\varepsilon(r)},
$$
with the polynomial factors $x,y$ as given in Hoste and Shanahan~\cite{hs_2004},
\begin{align*}
x&=(L+M^{2})\widetilde{A}_{K(1)}+\widetilde{A}_{K(-1)},\\
y&=M^{4}(L+M^{2})^{4},
\end{align*}
and the exponents $k(r),\varepsilon(r)$ are given by:
$$
k(r)=\begin{cases}
r-4&:\hspace{4pt}r>4\\
0&:\hspace{4pt}-4< r\leq4\\
-r-4&:\hspace{4pt}r\leq-4,
\end{cases}\hspace{20pt}
\varepsilon(r)=\begin{cases}
-1&:\hspace{4pt}r>4\\
0&:\hspace{4pt}-4< r\leq4\\
1&:\hspace{4pt}r\leq-4.
\end{cases}
$$
\end{remark}

Returning to twist-knot exteriors with attaching map $\phi_{r}$, for a boundary slope $p/q\in\mathcal{BS}_{K(n)}$, this corresponds to an essential surface in $\mathcal{M}_{K(n)}$ whose boundary is in the class $\mu_{K}^{p}\lambda_{K}^{q}\in\pi_{1}(\partial\mathcal{M}_{K(n)})$.
In the $r$-twisted Whitehead double, our boundary slopes will come from $(m_{x},m_{y})\in\mathcal{BS}_{W}$ which correspond to essential surfaces in $\mathcal{M}_{W}$ where the boundary components on $\partial N(\ell_{x})$ have parallel slopes $m_{x}$ and similarly, the boundary components on $\partial N(\ell_{y})$ have parallel slopes $m_{y}$.
We naturally expect to encounter boundary slopes of the form $(m_{x},\varnothing)$ corresponding to a boundary slope that can be isotoped to not intersect with the identified torus $\partial N(\ell_{y})=\partial\mathcal{M}_{K(n)}$.
These boundary slopes justify why $0,-4\in\mathcal{BS}_{D_{r}(K(n))}$ for all values of $n,r\in\mathbb{Z}$.

The more interesting boundary slopes we encounter are derived from boundary slopes $(m_{x},m_{y})\in\mathcal{BS}_{W}$ where $\phi_{r\ast}(m)=m_{y}$ for some $m\in\mathcal{BS}_{K(n)}$ by the above remark.
These boundary slopes will come from the gluing $\phi_{r}$, and so we expect to see:
$$
\phi_{r\ast}(p,q)=[\phi(\mu_{K}^{p}\lambda_{K}^{q})]=[(\lambda_{y})^{p}(\mu_{y}\lambda_{y}^{-r})^{q}]=[\mu_{y}^{q}\lambda_{y}^{p-qr}]=(q,p-qr).
$$
Thus, the boundary slopes in $\mathcal{BS}_{D_{r}(K(n))}$ which correspond to essential surfaces that nontrivially intersect the gluing torus will correspond to boundaries $m_{x}\in\mathbb{Q}\cup\{\infty\}$ where $(m_{x},m_{y})\in\mathcal{BS}_{W}$ and $p/q\in\mathcal{BS}_{K(n)}$ will correspond to $m_{y}=q/(p-qr)$.
This means that we may explicitly compute possible boundary slopes using a modified version of the table from Hoste and Shanahan by seeing when $m_{y}=q/(p-qr)$ for some $p/q\in\mathcal{BS}_{K(n)}$ and which pair $(m_{x},m_{y})$ is present in the table.

Included below are two tables of the computed boundary slopes for all cases of $r$ and $n\leq-1$ using the fact that the boundary slopes of $n$-twist knots are known~\cite{ho_1989}:

\begin{align*}
\mathcal{BS}_{K(n)}=\begin{cases}
\{-4,0,-4n\}&:n\leq-1\\
\{0\}&:n=0\\
\{0,-6\}&:n=1\\
\{-4,0,-4n-2\}&:n\geq2.
\end{cases}
\end{align*}
$$
\begin{tabular}{@{}cccccr@{}} 
\multicolumn{6}{@{}c}{Table 2. Boundary Slope Table for $D_{r}(K(n))$ with $n\leq-1$ via Lemma~\ref{bdyslopegluing}}\\ \hline
$\varnothing$ & $\varnothing$ & $1/(-4-r)$ & $1/(-r)$ & $1/(-4n-r)$ \\ \hline
$-4$ & $0$ & $-4r-16$ & $-4r$ & $-4r-16n$ & $r<-4$ \\
$-4$ & $0$ & $0$ & $32$ & $-16n+16$ & $r=-4$ \\
$-4$ & $0$ & $-4r-18$ & $-4r$ & $-4r-16n$ & $-4<r<0$ \\
$-4$ & $0$ & $-18$ & $0$ & $-16n$ & $r=0$ \\
$-4$ & $0$ & $-4r-18$ & $-4r-2$ & $-4r-16n$ & $0<r<-4n$ \\
$-4$ & $0$ & $16n-18$ & $16n-2$ & $0$ & $r=-4n$ \\
$-4$ & $0$ & $-4r-18$ & $-4r-2$ & $-4r-16n-2$ & $r>-4n$\\ \hline
\end{tabular}
$$
$$
\begin{tabular}{@{}cccccr@{}}
\multicolumn{6}{@{}c}{Table 3. Boundary Slope Table for $D_{r}(K(n))$ with $n\geq2$ via Lemma~\ref{bdyslopegluing}}\\ \hline
$\varnothing$ & $\varnothing$ & $1/(-4-r)$ & $1/(-r)$ & $1/(-4n-r)$ \\ \hline
$-4$ & $0$ & $-4r-16$ & $-4r$ & $-4r-16n-8$ & $r<-4n-2$ \\
$-4$ & $0$ & $16n-8$ & $16n+32$ & $0$ & $r=-4n-2$ \\
$-4$ & $0$ & $-4r-16$ & $-4r$ & $-4r-16n-10$ & $-4n-2<r<-4$ \\
$-4$ & $0$ & $0$ & $16$ & $-16n+6$ & $r=-4$ \\
$-4$ & $0$ & $-4r-18$ & $-4r$ & $-4r-16n-10$ & $-4<r<0$ \\
$-4$ & $0$ & $-18$ & $0$ & $-16n-10$ & $r=0$ \\
$-4$ & $0$ & $-4r-18$ & $-4r-2$ & $-4r-16n-10$ & $r>0$\\ \hline
\end{tabular}
$$

We see that the boundary slope corresponding to $1/(-r)$ is $-4r$ when $r\leq0$, and $-4r-2$ when $r>0$ (regardless of choice of $n$), which are the boundary slopes coming from $\mathcal{BS}_{K(r)}$.
Hence, we see that $\widetilde{P}_{K(-1),r}$ for $r\in\mathbb{Z}$ cannot be equal to $\widetilde{A}_{K(m)}$ for any $m\neq-4$ since the boundary slopes from $K(m)$ are $\{-4,0,-4m\}$ while the strongly detected boundary slopes coming from $P_{K(-1),r}$ are
$$
\begin{cases}
\{-4,0,-4r-16,16-4r\}&:r<-4\\
\{-4,0,32\}&:r=-4\\
\{-4,0,-4r-18,16-4r\}&:-4<r<0\\
\{-4,0,-18,16\}&:r=0\\
\{-4,0,-4r-18,16-4r\}&:0<r<4\\
\{-4,0,-34\}&:r=4\\
\{-4,0,-4r-18,-4r+14\}&:r>4.
\end{cases}
$$
Notice in the special cases of $r=\pm4$, we find that $P_{K(-1),\pm4}$ has slopes identical with $\widetilde{A}_{K(\pm8)}$; however, the polynomials themselves are different by computation, and so $P_{K(-1),r}\neq\widetilde{A}_{K(m)}$ for any $m$.

In practice, we find that exactly one factor $P_{K(-1),r}$ has Newton polygon $\mathrm{Newt}(P_{K(-1),r})$ which exhibits these slopes, while the only other factor observed $Q_{K(-1),r}$ has Newton polygon $\mathrm{Newt}(Q_{K(-1),r})$ which exhibits a slope of $2$ (which is never seen in the predicted slopes for any $r$).
While the computation remains difficult, a formula for the simple case when $n=-1$ is presented (having been verified for $r=-11,\ldots,11$ using the boundary slopes) in Remark~\ref{fig8dbl}.

From the computed examples, it is apparent that the $A$-polynomial of the $r$-twisted Whitehead double of a non-graph knot is not an inherently obvious computation; more optimistically, the $A$-polynomial of $r$-twisted Whitehead doubles still exhibit some connections to the $A$-polynomials of twist knots, seen with the $\widetilde{A}_{K(r+4)}\widetilde{A}_{K(r-4)}$ summand in the expression.

\section{Conclusion}
\label{conclusion}
In summary, we have provided formulas for computing $A$-polynomials of several families of satellite knots; namely, connected sums and iterated cables of pseudo-graph knots and all winding number zero satellites of graph knots.
From this, the $A$-polynomials of all graph knots can be computed once the construction of the graph knot as cables and connected sums is understood, and will have zero logarithmic Mahler measure.
For graph knots, the main property which allows winding number zero satellites to be computed is that their $A$-polynomials have no gaps and they have killing slopes.
Further calculations show that these killing slopes are connected to the knots $f(P)_{r}$ obtained from $(1/q)$-Dehn filling on $\partial V$.

One future goal is a strategy for understanding how to more generally compute the factor $\widetilde{F}_{\mathrm{Sat}(P,C,f)}$ for various families of knots, either recursively or explicitly, broadening the understanding of the $A$-polynomials of satellite knots.
Another direction is to find explicit formulas for the $A$-polynomials of certain knots, thereby extending the applications of the cabling formula and eliminating the need for polynomial reduction in certain cases.
As mentioned, it is unclear whether the $A$-polynomial of a graph knot can also be the $A$-polynomial of a knot with positive hyperbolic volume; more generally, it is unclear whether a satellite knot $K=\mathrm{Sat}(P,C,f)$ with $\mathrm{m}(A_{K})=0$ must also have $\mathrm{Vol}(\mathcal{M}_{K})=0$.
However, Corollary~\ref{graphknotsgz} implies the converse and counterexamples remain difficult to find.

\end{spacing}
\end{document}